\newtheorem{theorem}{Theorem}   
\newtheorem{lemma}[theorem]{Lemma}
\newtheorem{proposition}[theorem]{Proposition}
\newtheorem{corollary}[theorem]{Corollary}
\newtheorem{claim}{Claim}
\theoremstyle{definition}
\newtheorem{definition}[theorem]{Definition}
\newtheorem{remark}[theorem]{Remark}
\definecolor{MyDarkBlue}{rgb}{0,0.08,0.60}
\newcommand{\s}{\vspace{0.3cm}}
\newcommand{\red}[1]{{\color{red} #1}}
\begin{document}

\title{Automorphisms of Generalized Fermat curves}

\date{\today}

\author[R. A. Hidalgo]{Rub\'en A. Hidalgo}
\address{Departamento de Matem\'aticas\\Universidad T\'ecnica Federico Santa Mar\'{\i}a\\Valpara\'{\i}so \\Chile}
\email{ruben.hidalgo@mat.utfsm.cl}

\author[A. Kontogeorgis]{Aristides Kontogeorgis}
\address{Department of Mathematics, University of Athens\\
Panepistimioupolis, 15784 Athens, Greece}
\email{kontogar@math.uoa.gr}

\author[M. Leyton-Alvarez]{Maximiliano Leyton-\'{A}lvarez}
\address{Instituto de Matem\'atica y F\'isica, Universidad de Talca\\
Camino Lircay S$\backslash$N, Campus Norte, Talca, Chile}
\email{leyton@inst-mat.utalca.cl}

\author[P. Paramantzoglou]{Panagiotis Paramantzoglou }
\address{Department of Mathematics, University of Athens\\
Panepistimioupolis, 15784 Athens, Greece}
\email{pan\_param@math.uoa.gr}

\thanks{The first and third authors were  partially supported by projects Fondecyt 1110001 and Fondecyt 11121163 respectively while the second author is  supported by the Project ``{\em Thalis, Algebraic modelling of topological and Computational structures}''. The Project ``THALIS'' is implemented under the Operational Project ``Education and Life Long Learning''and is co-funded by the European Union (European Social Fund) and National Resources (ESPA)}

\maketitle

\begin{abstract} 
Let $K$ be an algebraically closed field of characteristic $p \geq 0$. A generalized Fermat curve of type $(k,n)$, where $k,n \geq 2$ are integers (for $p \neq 0$ we also assume that $k$ is relatively prime to $p$), is a non-singular irreducible projective algebraic curve $F_{k,n}$ defined over $K$ admitting a group of automorphisms $H \cong {\mathbb Z}_{k}^{n}$ so that $F_{k,n}/H$ is the projective line with exactly $(n+1)$ cone points, each one of order $k$. Such a group $H$ is called a generalized Fermat group of type $(k,n)$. 
If $(n-1)(k-1)>2$, then $F_{k,n}$ has genus $g_{n,k}>1$ and it is known to be non-hyperelliptic. In this paper, we prove that every generalized Fermat curve of type $(k,n)$ has a unique generalized Fermat group of type $(k,n)$ if $(k-1)(n-1)>2$ (for $p>0$ we also assume that $k-1$ is not a power of $p$). 

Generalized Fermat curves of type $(k,n)$ can be described as a suitable fiber product of $(n-1)$ classical Fermat curves of degree $k$. We prove that, for $(k-1)(n-1)>2$ (for $p>0$ we also assume that $k-1$ is not a power of $p$), each automorphism of such a fiber product curve can be extended to an automorphism of the ambient projective space. In the case that $p>0$ 
and $k-1$ is a power of $p$, we use tools from the theory  of complete projective 
intersections in order to prove that, for $k$ and $n+1$ relatively prime, 
every automorphism of the fiber product curve can also be extended to an automorphism of 
the ambient projective space. 

In this article we also prove that the set of fixed points of the non-trivial elements 
of the generalized Fermat group coincide with  the hyper-osculating points of the fiber product model  under the assumption that the characteristic $p$
is either zero or $p>k^{n-1}$.
\end{abstract}

\section{Introduction} 
In this paper, $K$ will denote an algebraically closed field of characteristic $p \geq 0$. A generalized 
Fermat curve of type $(k,n)$, where $k,n \geq 2$ are integers (and for $p>0$ we also assume that $k$ is relatively 
prime to $p$), is a non-singular irreducible projective algebraic curve $F_{k,n}$ defined over $K$ admitting a 
group of automorphisms $H \cong {\mathbb Z}_{k}^{n}$ so that $F_{k,n}/H$ is the projective line with exactly $(n+1)$ cone points,
each one of order $k$. Such a group $H$ is called a generalized Fermat group of type $(k,n)$. 
If $(n-1)(k-1)>2$, then $F_{k,n}$ has genus $g_{n,k}>1$ (see Section \ref{Sec:2}) and it is
known to be non-hyperelliptic \cite{Har77}. \\

The generalized Fermat curves are objects with a very interesting geometry. 
These curves provide us with a considerable amount of examples, 
and their study could eventually help us to generalize certain important results.
More precisely one of our future objectives is to generalize the work of of Y. Ihara 
\cite{Iha85} on Braid representations of the absolute Galois groups. By Belyi 
theorem he considered covers of the projective line ramified above $\{0,1,\infty\}$ and the 
Fermat curve and its arithmetic emerged naturally. 
If one tries to generalise to the more general $n+1$-ramified covers the generalised 
Fermat curves and their arithmetic emerged in a similar way. This will be the object of another article.

To finish this paragraph, we would like to discuss a second interesting aspect of these curves.  
It is known that the geometry of compact Riemann Surfaces  can be described via 
projective algebraic curves, Fuchsian group, 
Schottky groups, Abelian varieties, etc. However, given one of these 
descriptions, explicitly obtaining the others is a difficult problem, in fact in general it 
is a problem that has not been solved.  The majority of examples of Riemann Surfaces
where we explicitly know the uniformizing Fuchsian group, and the equations of an 
algebraic curve which represents them, are rigid examples, in other words they are not families.  
The generalized Fermat Curves of the type $(n,k)$ over $K=\mathbb{C}$ form a family of algebraic curves of 
complex dimensions $n-2$ in which we explicitly know, for each member of the family, 
a representation as an algebraic curve and the uniformizing Fuchsian group (see \cite{GHL09}).

In the case of arbitrary characteristics, the Generalized Fermat curves can be studied as
Kummer extensions of the rational function field.

We study the full group of automorphisms of generalized Fermat curves and the uniqueness 
of generalized Fermat groups. 
Our main result is Theorem \ref{unicidad-p} which states the uniqueness of generalized 
Fermat groups of type $(k,n)$ if $(k-1)(n-1)>2$ (for $p>0$ we also assume that $k-1$ is not a power of $p$). 

A generalized Fermat curve of type $(k,n)$ can be seen as a complete intersections in a projective 
space defined by the set of equations given in eq. (\ref{defineGFC}) in Section \ref{Sec:2}. Recall that a closed subscheme
$Y$ of $\mathbb{P}^s$ is called a (strict) complete intersection, if the homogeneous ideal in $K[x_1,\ldots,x_{n+1}]$ can be generated by $\mathrm{codim}(Y,\mathbb{P}^s)$ elements. By looking at the defining  equations, we may see the generalized Fermat curves as a suitable fiber product of $(n-1)$ classical Fermat curves of degree $k$. We prove that in such algebraic model the full group of automorphism is a subgroup of the linear group under the assumptions that 
$(n-1)(k-1)>2$ (if $p>0$ we also assume that $k-1$ is not a power of $p$). 

In the case that $p>0$ and $k-1$ is a power of $p$, then we may obtain a similar result under the assumption that $n+1$ is relatively prime to $k$ (Theorem \ref{teolineal}).
The different behaviour in the case $k-1=q=p^h$ 
is an expected phenomenon,  seen also  in the case of the Fermat curves 
$x_1^{q+1}+x_2^{q+1}+x_3^{q+1}=0$, where $q=p^h$, which have $\mathrm{PGU}_{3}(q^{2})$ as automorphism group, see \cite{Leo96}. Essentially this happens since raising to a $p$-power is linear and the Fermat curve in this case behaves like a quadratic form.

Our strategy, in the positive characteristic case, is the following. By a degree argument we show that the group of linear automorphisms is normal in the whole automorphism group. The group of linear automorphisms 
is studied by finding all linear transformations which leave the defining ideal of the curve invariant. For higher dimensional varieties there is an argument proving that every automorphism is linear, based on the fact that the Picard group is free. This argument can not be used in the case of curves, since the Picard groups of curves are known to have torsion. Nevertheless we can use a derivation argument in order to settle some cases.

\bigskip

This paper is organized as follows. In Section \ref{Sec:2} we describe a fiber product
of generalized Fermat curves and introduce the main results of the paper. The most important is Theorem \ref{unicidad-p} which states the uniqueness of the generalized Fermat groups of type $(k,n)$, when $(k-1)(n-1)>2$ (and for $p>0$ the extra assumption that $k-1$ is not a power of $p$). In the fiber product model, under the same hypothesis, we obtain that the full group of automorphisms is linear. The proof of the above is provided in Section \ref{Sec:7}.

In Section \ref{Sec:3} we restrict  our study to zero characteristic or to positive characteristic $p> k^{n-1}$ and prove that the set of 
fixed points of the non-trivial elements of the generalized Fermat group in the fiber product model 
coincide with the set of hyper-osculating points of the fiber product model.

In Section \ref{Sec:4}  we provide the proof of Theorem \ref{teolineal}, concerning  the linearity of the full group of automorphisms in the case when $k-1$ is a power of $p>0$ and $k$ is relatively prime to $p$, under the extra condition that $k$ and $n+1$ are also relatively prime.

\section{Main results}\label{Sec:2}
We use the notation $(a,b)$ to denote the maximum common divisor between the positive integers $a$ and $b$; so $(a,b)=1$ states that $a$ and $b$ are relatively prime integers.

Let $K$ be an algebraically closed field of characteristic $p \geq 0$, let 
$n,k \geq 2$ be integers (if  $p>0$, then we also assume that $(k,p)=1$).

A pair $(F_{k,n},H)$ is called a generalized Fermat pair of the type $(k,n)$ if 
$F_{k,n}$ is a generalized Fermat curve of type $(k,n)$, defined over $K$, and $H \cong {\mathbb Z}_{k}^{n}$ is a generalized Fermat group of type $(k,n)$ of $F_{k,n}$.  The genus of $F_{k,n}$ is

\begin{equation}\label{genero}
g_{(k,n)}=1+ \frac{k^{n-1}}{2} ((n-1)(k-1)-2).
\end{equation}

In particular, $g_{(k,n)}>1$ if and only if $(k-1)(n-1)>2$; in this case the generalized Fermat curve is non-hyperelliptic \cite{Har77}.
If $K={\mathbb C}$, then $F_{k,n}$ defines a closed Riemann surface. Riemann surfaces of this kind  were studied in \cite{GHL09}.

Two generalized Fermat pairs of same type, say $(F_{k,n},H)$ and $(\widehat{F}_{k,n},\widehat{H})$, are called equivalent if there is an isomorphism $\phi:F_{k,n} \to \widehat{F}_{k,n}$ so that $\phi H \phi^{-1}=\widehat{H}$.

\s
\subsection{A fiber product description}
Let us consider a generalized Fermat pair $(F_{k,n},H)$. Let us consider a branched regular covering $\pi:F_{k,n} \to {\mathbb P}^{1}$, whose deck group is $H$. By composing by a suitable M\"obius transformation (that is, an element of ${\rm PSL}_{2}(K)$) at the left of $\pi$, we may assume that the branch values of $\pi$ are given by the points
$$\infty, 0,1, \lambda_{1}, \ldots, \lambda_{n-2},$$
where $\lambda_i \in K-\{0,1\}$ are pairwise different.

Let us consider the non-singular complex projective algebraic curve

\begin{equation} \label{defineGFC}
C^{k}(\lambda_{1},\ldots,\lambda_{n-2}):=\left \{ \begin{array}{rcc}
              x_0^k+x_1^k+x_2^k&=&0\\
              \lambda_1x_0^k+x_1^k+x_3^k&=&0\\
              \vdots\hspace{1cm} &\vdots &\vdots\\
              \lambda_{n-2}x_0^k+x_1^k+x_n^k&=&0\\
             \end{array}\right \}\subset {\mathbb P}^n.
\end{equation}

\s
\noindent
\begin{remark}[$C^{k}(\lambda_{1},\ldots,\lambda_{n-2})$ as a fiber product of classical Fermat curves]
Set $\lambda_{0}=1$ and, for each $j \in \{0,1,\ldots,n-2\}$, let $C_{j}$ be the classical Fermat curve defined by $\lambda_{j}x_{1}^{k}+x_{2}^{k}+x_{3+j}^{k}=0$. Let us consider the rational maps $\pi_{j}:C_{j} \to {\mathbb P}^{1}=K \cup \{\infty\}$ defined by $\pi_{j}([x_{1}:x_{2}:x_{3+j}])=-(x_{2}/x_{1})^{k}$. The branch values of $\pi_{j}$ are $\infty$, $0$ and $\lambda_{j}$. If we consider the fiber product of all these curves, with the given maps, we obtain a reducible projective algebraic curve with $k^{n-2}$ irreducible components. All of these components are isomorphic to $C^{k}(\lambda_{1},\ldots,\lambda_{n-2})$.
\end{remark}

\s

Let $H_{0}$ be the group generated by the linear transformations $\varphi_{0},\ldots,\varphi_{n}$, where
\begin{center}
 $\varphi_j([x_0:\cdots:x_j:\cdots:x_n]):=[x_0:\cdots:w_kx_j:\cdots:x_n]$,\end{center}
where $w_k$ is a primitive $k$-th root of unity. In \cite{GHL09} the following facts were proved:
\begin{enumerate}
\item $H_{0} \cong {\mathbb Z}_{k}^{n}$.
\item $\varphi_{0} \circ \varphi_{1} \circ \cdots \circ \varphi_{n}=1$. 
\item $H_{0}<{\rm Aut}(C^{k}(\lambda_{1},\ldots,\lambda_{n-2}))$.
\item The set ${\rm Fix}(\varphi_{j})$ of fixed points of  $\varphi_j$ in $C^{k}(\lambda_{1},\ldots,\lambda_{n-2})$ is given by the intersection 
$${\rm Fix}(\varphi_j):=\{x_j:=0\} \cap C^{k}(\lambda_{1},\ldots,\lambda_{n-2}),$$
which is of cardinality $k^{n-1}$. Set $F(H_{0}):=\cup_{j=0}^n{\rm Fix}(\varphi_j)$.
\item The map 
\begin{equation} \label{pi0}
\pi_{0}:C^{k}(\lambda_{1},\ldots,\lambda_{n-2}) \to {\mathbb P}^{1}: [x_0:\cdots:x_j:\cdots:x_n] \mapsto -\left(\frac{x_{1}}{x_{2}}\right)^{k}
\end{equation}
is a regular branched cover with deck group $H_{0}$ and whose branch values are $$\infty, 0,1, \lambda_{1}, \ldots, \lambda_{n-2},$$ each one of order $k$. In other words, the pair $(C^{k}(\lambda_{1},\ldots,\lambda_{n-2}),H_{0})$
is a generalized Fermat pair of type $(k,n)$. 
\end{enumerate}

\s
\noindent
\begin{theorem}\label{teo1}
The generalized Fermat pairs $(F_{k,n},H)$ and $(C^{k}(\lambda_{1},\ldots,\lambda_{n-2}),H_{0})$ are equivalent.
Moreover, the only non-trivial elements of $H_{0}$ acting with fixed points are the non-trivial powers 
of the generators $\varphi_{0}, \ldots, \varphi_{n}$.
\end{theorem}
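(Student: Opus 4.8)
The plan is to establish two things: first, that $(F_{k,n},H)$ is equivalent to $(C^{k}(\lambda_1,\ldots,\lambda_{n-2}),H_0)$, and second, the description of which elements of $H_0$ act with fixed points. For the first part, I would start from the branched regular covering $\pi:F_{k,n}\to\mathbb{P}^1$ with deck group $H$, normalized (as in the text) so that its branch values are $\infty,0,1,\lambda_1,\ldots,\lambda_{n-2}$. The map $\pi_0:C^k(\lambda_1,\ldots,\lambda_{n-2})\to\mathbb{P}^1$ from eq.~(\ref{pi0}) is likewise a branched regular covering with deck group $H_0$ and the \emph{same} branch values, each of order $k$, by fact (5) recalled from \cite{GHL09}. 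So both covers correspond, in the classification of regular branched covers of $\mathbb{P}^1$ (equivalently, via the fundamental group of $\mathbb{P}^1$ minus the branch locus, or in the Riemann-surface case via Galois theory / Fuchsian groups), to surjections from $\pi_1(\mathbb{P}^1\setminus\{\infty,0,1,\lambda_1,\ldots,\lambda_{n-2}\})$ onto $\mathbb{Z}_k^n$ sending each of the $n+1$ standard loops to a generator of order exactly $k$, with the product relation. One checks that any two such surjections differ by an automorphism of $\mathbb{Z}_k^n$, hence the two covers are isomorphic by a fiber-preserving isomorphism $\phi:F_{k,n}\to C^k(\lambda_1,\ldots,\lambda_{n-2})$ conjugating $H$ to $H_0$; this is exactly the asserted equivalence. (In arbitrary characteristic the same conclusion follows because the tame fundamental group of the punctured line has the expected abelianization, so the Kummer-type cover is determined up to the same ambiguity.)

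For the second statement, I would argue directly with the group $H_0\cong\mathbb{Z}_k^n$ acting on $C:=C^k(\lambda_1,\ldots,\lambda_{n-2})$. Since $\pi_0:C\to C/H_0=\mathbb{P}^1$ has exactly $n+1$ branch points, each of order $k$, the stabilizer of any point of $C$ is cyclic of order $1$ or $k$, and the nonidentity elements acting with a fixed point are precisely those lying in the stabilizer of some branch fiber. By fact (4), the $n+1$ subgroups $\langle\varphi_j\rangle$, $j=0,\ldots,n$, are realized as such stabilizers (the fiber over the $j$-th branch value, cut out by $\{x_j=0\}$), so every nontrivial power of each $\varphi_j$ fixes points. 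Conversely, I need to see that no other nontrivial element of $H_0$ fixes a point. The key point is that the $n+1$ cyclic subgroups $\langle\varphi_0\rangle,\ldots,\langle\varphi_n\rangle$ together with the single relation $\varphi_0\varphi_1\cdots\varphi_n=1$ already generate all of $H_0\cong\mathbb{Z}_k^n$, and by the Riemann–Hurwitz / orbifold count (using the genus formula (\ref{genero})) the total ramification of $\pi_0$ is exactly accounted for by these $n+1$ conjugacy-trivial (since $H_0$ is abelian) classes of stabilizers; any additional element with a fixed point would introduce extra branching and violate the count. Equivalently, in the fiber-product model, a nontrivial element $\prod\varphi_j^{a_j}$ fixing a point $[x_0:\cdots:x_n]$ would require $w_k^{a_j}x_j=\mu x_j$ for all $j$ and a common scalar $\mu$, forcing all $x_j$ with $a_j\not\equiv 0$ (relative to the others) to vanish; since at most one coordinate of a point on $C$ can vanish (two vanishing coordinates would make a defining equation degenerate, as one checks from (\ref{defineGFC}) using $\lambda_i\ne 0,1$ pairwise distinct), the element must be a power of a single $\varphi_j$.

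I expect the main obstacle to be the converse direction of the second statement — ruling out ``hidden'' elements with fixed points — rather than the equivalence, which is essentially a bookkeeping exercise with regular branched covers. The cleanest route is the linear-algebra argument in the fiber-product model just sketched: carefully verify that a point of $C^k(\lambda_1,\ldots,\lambda_{n-2})$ can have at most one zero coordinate (here the hypotheses $\lambda_i\neq 0,1$ and pairwise distinct are used), and then observe that an element of the diagonal group $H_0$ fixing such a point projectively must act by the same root of unity on all nonzero coordinates, hence be a power of the unique $\varphi_j$ indexed by the vanishing coordinate, or the identity. I would present this computation as the core of the proof and treat the cover-theoretic equivalence more briefly, citing \cite{GHL09} for facts (1)–(5).
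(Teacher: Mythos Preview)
Your proposal is correct, and both halves differ from the paper's route. For the equivalence, the paper argues via the fiber product of the $(n-1)$ classical Fermat curves $C_j:\lambda_j x_1^k+x_2^k+x_{3+j}^k=0$ with the maps $\pi_j([x_1:x_2:x_{3+j}])=-(x_2/x_1)^k$: the fiber product is reducible with $k^{n-2}$ mutually isomorphic components, each a generalized Fermat curve of type $(k,n)$; the universal property of the fiber product then forces $(F_{k,n},H)$ to be equivalent to any such component with its stabilizer, which in turn is identified with $(C^k(\lambda_1,\ldots,\lambda_{n-2}),H_0)$. Your argument instead classifies connected Galois covers of $\mathbb{P}^1$ with the given abelian monodromy data and observes that any two surjections $\pi_1\to\mathbb{Z}_k^n$ with the prescribed local orders differ by an automorphism of $\mathbb{Z}_k^n$; this is cleaner conceptually, though in positive characteristic you are implicitly invoking Kummer theory (or the prime-to-$p$ tame fundamental group) rather than the topological $\pi_1$, which you should make explicit. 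For the fixed-point statement, the paper simply defers to \cite{GHL09}, whereas your direct linear-algebra argument (at most one coordinate of a point of $C^k(\lambda_1,\ldots,\lambda_{n-2})$ can vanish, hence a diagonal element fixing a point projectively must be a power of a single $\varphi_j$) is self-contained and arguably preferable here; the verification that two vanishing coordinates force all coordinates to vanish is exactly where the hypotheses $\lambda_i\neq 0,1$ and pairwise distinct enter, so spell that out case by case.
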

\begin{proof}
This result was obtained, for $K={\mathbb C}$ in \cite{GHL09}.

It can be seen that a generalized Fermat curve $F_{k,n}$  is in fact a 
fiber product of $n-1$ classical Fermat curves. In fact, the $n-1$ triples 
$$\{\infty,0,1\}, \{\infty,0,\lambda_{1}\}, \ldots, \{\infty,0,\lambda_{n-2}\}$$
produce, respectively, the Fermat curves
$$C_{0}: x_{1}^{k}+x_{2}^{k}+x_{3}^{k}=0, 
C_{1}: \lambda_{1} x_{1}^{k}+x_{2}^{k}+x_{4}^{k}=0, \ldots,
C_{n-2}: x_{1}^{k}+x_{2}^{k}+x_{n}^{k}=0.
$$

If we set $\lambda_{0}=1$, then on $C_{j}$ we consider the map $\pi_{j}:C_{j} \to {\mathbb P}^{1}$ defined by
$\pi([x_{1}:x_{2}:x_{3+j}])=-(x_{2}/x_{1})^{k}$. The branch values of $\pi_{j}$ are $\infty, 0, \lambda_{j}$.
If we consider the fiber product of the above curves, using the above maps, we obtain a reducible algebraic 
curve admitting a group $({\mathbb Z}_{k}^{2})^{n-1}$ as a group of automorphisms and $k^{n-2}$ 
irreducible components. All its irreducible components are isomorphic and they are  generalized Fermat 
curves of type $(k,n)$, each one is invariant by a subgroup isomorphic to ${\mathbb Z}_{k}^{n}$. 
Let $\widehat{C}$ be one of these irreducible components and let $\widehat{H}$ be its stabilizer
in the above group.  Then the quotient $\widehat{C}/\widehat{H}=F_{k,n}/H$. Now, 
the universality property of the fiber product ensures that $(F_{k,n},H)$ and $(\widehat{C},\widehat{H})$ 
are isomorphic. By the construction of the fiber product, it can be seen that in fact $(\widehat{C},\widehat{H})$ 
and $(C^{k}(\lambda_{1},\ldots,\lambda_{n-2}),H_{0})$ are isomorphic.
\end{proof}

\s
\subsection{Automorphisms of generalized Fermat curves}
Let us consider a generalized Fermat pair $(F_{k,n},H)$. By Theorem \ref{teo1} we may assume (and this will be from now on) that
$$(F_{k,n},H)=(C^{k}(\lambda_{1},\ldots,\lambda_{n-2}),H_{0}).$$

If $n=2$, then $F_{k,2}$ is an ordinary Fermat curve of degree $k$ and its 
automorphism group was studied by P. Tzermias \cite{Tze95} for $p=0$ and by H. Leopoldt \cite{Leo96} for $p>0$. These results state that $H_{0}$ is the unique generalized Fermat group of type $(k,2)$ if $k \geq 4$ (in the case $k<4$ it is unique up to conjugation).

If $n \geq 3$, then in \cite[Cor. 9]{GHL09}
it was proved that, for $K={\mathbb C}$, every automorphism which normalizes $H_{0}$ is linear i.e., 
the normalizer of $H_{0}$ is a subgroup of $\mathrm{PGL}_{n+1}(\mathbb{C})$. The arguments  are 
still valid for any characteristic.

Again, assuming $K={\mathbb C}$, the following uniqueness results of the generatlized Fermat groups are known. In the case that $k=2$ (these are also called generalized Humbert curves) it was proved in \cite{CGHR08} that for $n=4,5$ the generalized Fermat group of type $(k,n)$ is unique.
In \cite{FGHL13} Y. Fuertes,  G. Gonz\'alez-Diez, the first and third author proved that for $k \geq 3$
and $n=3$ the generalized Fermat group of type $(k,n)$ is also unique. In the same paper 
it was conjectured that the uniqueness holds for $(k-1)(n-1)>2$ (in particular, that it is normal
in the whole automorphism group).  Here  we solve positively such a conjecture.

\s
\noindent
\begin{theorem}\label{unicidad-p}
Let $k,n \geq 2$ be integers so that $(k-1)(n-1)>2$. If $p>0$, then we also assume
that $k-1$ is not a power of $p$ and that $(p,k)=1$. Then $H_{0}$ is the only generalized Fermat group of 
type $(k,n)$ of $F_{k.n}$. Moreover, ${\rm Aut}(F_{k,n})$ is linear and it consists of matrices such that only 
an element in each row and column is non-zero. 
\end{theorem}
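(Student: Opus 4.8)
The plan is to reduce the statement to two facts: (a) every automorphism of $F_{k,n}$ is linear, i.e.\ $G:=\mathrm{Aut}(F_{k,n})\subseteq\mathrm{PGL}_{n+1}(K)$ acting on the fiber-product model $C^k(\lambda_1,\dots,\lambda_{n-2})\subseteq\mathbb{P}^n$; and (b) a linear automorphism preserving the defining ideal $I=(f_0,\dots,f_{n-2})$ (with $f_0=x_0^k+x_1^k+x_2^k$ and $f_j=\lambda_j x_0^k+x_1^k+x_{j+2}^k$) is a monomial matrix. Granting (a) and (b), the last sentence of the theorem is immediate; moreover $H_0$ is exactly the subgroup of diagonal matrices of $G$ with entries $k$-th roots of unity, hence is characteristic, and any other generalized Fermat group coincides with it by a short argument on monomial matrices given at the end. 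If $n=2$, everything is the theorem of Tzermias \cite{Tze95} (for $p=0$) and Leopoldt \cite{Leo96} (for $p>0$) applied to the Fermat curve of degree $k\ge 4$, so from now on I assume $n\ge 3$.

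I would first dispatch (b), by computing all linear transformations leaving $I$ invariant. Since $I$ is generated in degree $k$ and $I_k=\langle f_0,\dots,f_{n-2}\rangle$ has dimension $n-1$, a linear $\phi$ preserves $I$ iff it preserves $V:=I_k$; as $V\subseteq W':=\langle x_0^k,\dots,x_n^k\rangle$, writing $L_i:=\phi^* x_i$ one gets that each $\phi^* f_\ell=\sum_i c_{\ell i}L_i^k$ lies in $W'$, i.e.\ carries no genuine mixed monomial. Exploiting the three-term shape of $f_0$ and the fact that each of $x_3,\dots,x_n$ occurs in exactly one of $f_1,\dots,f_{n-2}$, these vanishing conditions reduce to a Waring-type rigidity for sums of $k$-th powers of linear forms, forcing every $L_i$ to be a scalar multiple of a coordinate. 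This is where the hypothesis is used: when $k-1=p^h$, the intermediate binomial coefficients $\binom{k}{a}$ ($2\le a\le k-2$) all vanish, raising to the $k$-th power becomes a Frobenius-twisted linear operation, the rigidity collapses, and genuinely non-monomial linear automorphisms appear — exactly as $x_0^{q+1}+x_1^{q+1}+x_2^{q+1}=0$ acquires $\mathrm{PGU}_3(q^2)$, cf.\ \cite{Leo96} and Theorem \ref{teolineal}. Once $\phi$ is monomial, one reads off that the group of linear automorphisms is $N\cong H_0\rtimes P$, with $H_0$ the diagonal part and $P$ the finite group of coordinate permutations realizing symmetries of the branch configuration $\{\infty,0,1,\lambda_1,\dots,\lambda_{n-2}\}$ in $\mathrm{PSL}_2(K)$.

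The essential point is (a), that $N=G$. By a degree argument one first obtains $N\trianglelefteq G$: every $\phi\in G$ fixes the intrinsic class $\omega_{F_{k,n}}\cong\mathcal{O}_{F_{k,n}}(m)$ with $m=(n-1)(k-1)-2>0$ by adjunction for the complete intersection, so $\phi^*\mathcal{O}(1)$ differs from $\mathcal{O}(1)$ by an $m$-torsion class, and the sublocus of trivial twist is normal; this reduces $N=G$ to killing $G/N$. For varieties of dimension $\ge 2$ one would finish by invoking freeness of the Picard group to force $\mathrm{Aut}$-invariance of the ample generator, but $\mathrm{Pic}^0(F_{k,n})$ has $m$-torsion, so for curves one needs the derivation argument instead. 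Here one uses that $\phi$ acts on Kähler differentials and that the hyperplane class is pinned down among the $m$-th roots of $\omega_{F_{k,n}}$ by the ramification data of the osculating/Wronskian (Hasse-derivative) construction attached to the projective embedding, which is intrinsic; provided $k-1$ is not a power of $p$, this computation does not degenerate and forces $\phi^*\mathcal{O}(1)=\mathcal{O}(1)$, i.e.\ $\phi\in N$. Hence $G=N$ is monomial, which is the final assertion of the theorem. Working out this derivation/Wronskian argument in positive characteristic is where I expect the real difficulty to lie; the ideal-theoretic rigidity and the bookkeeping below are comparatively routine, though the rigidity, too, genuinely requires $k-1$ not to be a power of $p$.

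Finally, uniqueness. Let $\widehat{H}\le G$ be any generalized Fermat group of type $(k,n)$ of $F_{k,n}$; by (a) it lies in $G=H_0\rtimes P$, is abelian $\cong\mathbb{Z}_k^n$ of order $k^n$, and $F_{k,n}/\widehat{H}=\mathbb{P}^1$ with $n+1$ cone points each of order $k$. Projecting $\widehat{H}$ to $P$, if the image is non-trivial then $\widehat{H}$ contains an element with non-trivial permutation part, whose fixed-point set on $F_{k,n}$ is incompatible — via Riemann–Hurwitz for $F_{k,n}\to F_{k,n}/\widehat{H}$ together with the fact (Theorem \ref{teo1}) that the only non-trivial elements of $H_0$ with fixed points are the powers of the $\varphi_j$, each with $k^{n-1}$ fixed points — with $\widehat{H}$ having the signature of a generalized Fermat group, just as in \cite{CGHR08} and \cite{FGHL13}. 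Hence $\widehat{H}\subseteq H_0$, and by order $\widehat{H}=H_0$. Therefore $H_0$ is the unique generalized Fermat group of type $(k,n)$ of $F_{k,n}$, which completes the proof.
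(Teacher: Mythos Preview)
Your decomposition into (a) linearity and (b) monomiality is natural, and your treatment of (b) matches the paper's (Lemma~\ref{lemalineal1} and the gradient computation preceding it). The gap is in (a). You propose to force $\phi^*\mathcal{O}(1)=\mathcal{O}(1)$ by arguing that the hyperplane class is singled out among the $m$-th roots of $\omega$ by osculation/Wronskian data, but you do not carry this out, and it is not clear how it would go: the Wronskian divisor is attached to a linear system, not to a line bundle alone, and an abstract automorphism $\phi$ transports the whole package isomorphically, so there is no obvious intrinsic invariant distinguishing $\mathcal{O}(1)$ from $\phi^*\mathcal{O}(1)$. The paper's osculation results (Theorem~\ref{th:hyperosc}) are only established for $p=0$ or $p>k^{n-1}$, far stronger than ``$k-1$ not a power of $p$''; and the derivation argument that does appear in the paper is used for the complementary Theorem~\ref{teolineal}, where it needs the extra hypothesis $(k,n+1)=1$ to conclude. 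Neither tool delivers (a) under the present hypotheses.

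The paper avoids this by reversing the logical order. It proves only that $L\trianglelefteq\mathrm{Aut}(F_{k,n})$ (Lemma~\ref{normL}, your degree argument) and that $L\leq N(H_0)$ (Corollary~\ref{HnormalL}, a consequence of monomiality), and then shows that $H_0$ is the unique generalized Fermat group \emph{inside $L$} (Lemma~\ref{lema3}). That step is an induction on $n$: one first argues that the two families $\{\langle\varphi_j\rangle\}$ and $\{\langle\varphi_j^*\rangle\}$ of standard cyclic subgroups must share a member---by projecting to $\mathrm{PGL}_2(K)$ and counting centralizers in $H_0$---and then quotients by the shared cyclic to reduce to type $(k,n-1)$, with the base cases supplied by \cite{Tze95,Leo96,CGHR08,FGHL13} and a direct computation at $(k,n)=(3,3)$. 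Once $H_0$ is unique in $L$, any $\tau\in\mathrm{Aut}(F_{k,n})$ sends $H_0$ to $\tau H_0\tau^{-1}\leq L$ by normality of $L$, hence to $H_0$; thus $\mathrm{Aut}(F_{k,n})=N(H_0)$, and $N(H_0)\leq\mathrm{PGL}_{n+1}(K)$ was already known from \cite{GHL09}. So linearity is a \emph{consequence} of uniqueness-within-$L$, not a prerequisite. Your final uniqueness paragraph has the right flavor but skips precisely this inductive core; the centralizer count in the Claim of Lemma~\ref{lema3} is where the actual work lies.
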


\s
\noindent
\begin{remark}
If $k-1$ is a power of $p$, the previous theorem is, in general, false. For example, if 
 $n=2$ and $k=1+p^h$, $p>0$,  the group $H_0$ is not always  a normal subgroup of 
\red{$\mathrm{Aut}(F_{k,n})=\mathrm{PGU}_{3}(p^{2h})$.}
\end{remark}

\s
\noindent
\begin{corollary}
Every generalized Fermat curve of type $(k,n)$ has a unique generalized Fermat group of same type if $(k-1)(n-1)>2$ and, for $p>0$, that $k-1$ is not a power of $p$.
\end{corollary}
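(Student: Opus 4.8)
The plan is to deduce the statement directly from Theorem \ref{unicidad-p} by transporting the uniqueness along the equivalence provided by Theorem \ref{teo1}. First I would take an arbitrary generalized Fermat curve $F$ of type $(k,n)$ and suppose it carries a generalized Fermat group $H$ of type $(k,n)$, so that $(F,H)$ is a generalized Fermat pair of type $(k,n)$. By Theorem \ref{teo1}, there exist pairwise distinct $\lambda_{1},\ldots,\lambda_{n-2}\in K-\{0,1\}$ and an isomorphism $\phi\colon F\to C^{k}(\lambda_{1},\ldots,\lambda_{n-2})$ with $\phi H\phi^{-1}=H_{0}$.

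Next I would record the elementary observation that the property of being a generalized Fermat group of type $(k,n)$ is invariant under conjugation by isomorphisms of curves: if $\psi\colon X\to Y$ is an isomorphism and $G<{\rm Aut}(X)$ satisfies $G\cong{\mathbb Z}_{k}^{n}$ with $X/G\cong{\mathbb P}^{1}$ having exactly $n+1$ cone points, each of order $k$, then $\psi G\psi^{-1}<{\rm Aut}(Y)$ has the same properties, since $\psi$ descends to an isomorphism $X/G\cong Y/(\psi G\psi^{-1})$ carrying cone points to cone points of the same order. Hence, if $H'$ is any generalized Fermat group of type $(k,n)$ on $F$, then $\phi H'\phi^{-1}$ is a generalized Fermat group of type $(k,n)$ on $C^{k}(\lambda_{1},\ldots,\lambda_{n-2})$. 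Now Theorem \ref{unicidad-p} applies verbatim: its hypotheses, namely $(k-1)(n-1)>2$ and, when $p>0$, that $k-1$ is not a power of $p$ (together with the standing assumption $(p,k)=1$), are precisely those assumed here, and it asserts that $H_{0}$ is the unique generalized Fermat group of type $(k,n)$ on $C^{k}(\lambda_{1},\ldots,\lambda_{n-2})$. Therefore $\phi H'\phi^{-1}=H_{0}=\phi H\phi^{-1}$, and conjugating back by $\phi^{-1}$ gives $H'=H$. As $H'$ was arbitrary, $F$ admits a unique generalized Fermat group of type $(k,n)$.

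There is essentially no obstacle in this argument: all of the substance lies in Theorems \ref{teo1} and \ref{unicidad-p}, and the corollary is just the remark that uniqueness, once established for the normal-form model $C^{k}(\lambda_{1},\ldots,\lambda_{n-2})$, is an isomorphism invariant and hence passes to every curve in the isomorphism class. The only point deserving a word of care is the verification that ``generalized Fermat group of type $(k,n)$'' transports along isomorphisms, which is immediate from the definition of a generalized Fermat pair and the accompanying notion of equivalence.
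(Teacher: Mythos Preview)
Your proposal is correct and matches the paper's intent: the corollary is stated without proof, as an immediate consequence of Theorem~\ref{unicidad-p} combined with Theorem~\ref{teo1}. Your argument spells out precisely the transport-along-isomorphism step that the paper leaves implicit.
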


\s
\noindent
\begin{corollary}
Let $k>2$ and, for $p>0$, let us assume that $(p,k)=1$ and that $k-1$ is not a 
power of $p$. Then $H_{0}$ is a normal subgroup of ${\rm Aut}(F_{k,n})$.
\end{corollary}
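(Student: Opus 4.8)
The plan is to deduce the corollary directly from Theorem~\ref{unicidad-p} by a soft group-theoretic argument. First I would observe that, since $k>2$ and $n\geq 2$, one always has $(k-1)(n-1)\geq 2$, with equality only in the single case $(k,n)=(3,2)$. So, apart from that borderline case, the hypothesis $(k-1)(n-1)>2$ of Theorem~\ref{unicidad-p} is automatically in force, and the theorem tells us that $H_{0}$ is the \emph{only} generalized Fermat group of type $(k,n)$ of $F_{k,n}$.

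The second and essential step is the remark that ``being a generalized Fermat group of type $(k,n)$'' is invariant under conjugation inside $\mathrm{Aut}(F_{k,n})$. Indeed, given $\phi\in\mathrm{Aut}(F_{k,n})$, put $H'=\phi H_{0}\phi^{-1}$. Then $H'\cong{\mathbb Z}_{k}^{n}$, and since $\phi(H_{0}x)=H'(\phi x)$ for every $x$, the automorphism $\phi$ descends to an isomorphism $F_{k,n}/H_{0}\to F_{k,n}/H'$ which intertwines the two quotient maps; hence $F_{k,n}/H'$ is again a projective line carrying exactly $n+1$ cone points, each of order $k$, i.e. $H'$ is a generalized Fermat group of type $(k,n)$. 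By the uniqueness statement of Theorem~\ref{unicidad-p} we get $H'=H_{0}$; as $\phi$ was arbitrary, $H_{0}\trianglelefteq\mathrm{Aut}(F_{k,n})$.

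It remains to deal with $n=2$, i.e. the classical Fermat curve of degree $k$. Here I would invoke the results of Tzermias~\cite{Tze95} for $p=0$ and of Leopoldt~\cite{Leo96} for $p>0$ --- for which the hypotheses $(p,k)=1$ and ``$k-1$ not a power of $p$'' are precisely what is required --- to get that $H_{0}$ is the unique generalized Fermat group of type $(k,2)$ whenever $k\geq 4$, after which the conjugation argument of the previous paragraph applies word for word. The remaining value $(k,n)=(3,2)$ is the genus-$1$ Fermat cubic, in which $H_{0}$ is only determined up to conjugacy; this case falls outside the scope of the statement (equivalently, of the standing hypothesis $(k-1)(n-1)>2$) and is set aside.

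There is no genuine obstacle in this corollary: all the difficulty has been packed into Theorem~\ref{unicidad-p} (and, for $n=2$, into the cited classical results). The only points needing care are the verification of conjugation-invariance above --- which is immediate --- and the bookkeeping that correctly isolates the degenerate case $(k,n)=(3,2)$.
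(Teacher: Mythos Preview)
Your approach is correct and is precisely what the paper has in mind: the corollary is stated without proof because it is immediate from Theorem~\ref{unicidad-p} via the conjugation-invariance argument you spell out (any conjugate of $H_0$ is again a generalized Fermat group of type $(k,n)$, so uniqueness forces normality).

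One small redundancy: your third paragraph, treating $n=2$ separately and appealing to Tzermias and Leopoldt, is unnecessary. For $n=2$ and $k\geq 4$ one has $(k-1)(n-1)=k-1\geq 3>2$, so Theorem~\ref{unicidad-p} already applies directly; there is no need to invoke the classical results a second time (they are, of course, an input to the proof of Theorem~\ref{unicidad-p} itself, as the base of the induction). The only pair genuinely outside the hypothesis $(k-1)(n-1)>2$ is $(k,n)=(3,2)$, which you correctly isolate. Note, however, that the corollary as stated does not explicitly exclude $(3,2)$; your remark that it ``falls outside the scope of the statement'' is slightly off, though the paper is equally silent on this genus-one boundary case.
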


\s
\noindent
\begin{remark}
If $(k-1)(n-1) \leq 2$, then it is known that 
${\rm Aut}(F_{k,n})<{\rm PGL}_{n+1}(K)$. 
Let us now assume that $(k-1)(n-1)>2$ and, for $p>0$, that $k-1$ is not a power of $p$. 
Theorem \ref{unicidad-p} asserts that ${\rm Aut}(F_{k,n})$ coincides with the normalizer 
$N(H_{0})$ of $H_{0}$, so by the results in  \cite[Cor. 9]{GHL09} we obtain ${\rm Aut}(F_{k,n})<{\rm PGL}_{n+1}(K)$. In the same paper it is mentioned how to compute ${\rm Aut}(F_{k,n})$. This is done observing the short exact sequence:
\[
1 \rightarrow H_{0} \rightarrow {\rm Aut}(F_{k,n}) \rightarrow G_0 \rightarrow 1,
\]
where $G_0$ is the subgroup of $\mathrm{PGL}_{2}(K)={\rm Aut}(\mathbb{P}^1)$ which leaves invariant the set of branch points $\{0,1,\infty,\lambda_1,\ldots, \lambda_{n-2}\}$. 
\end{remark}

\s

In the case that $K={\mathbb C}$, the above uniqueness results provides the following ``kind of Torelli's" result. 

\s
\noindent
\begin{corollary}
Let $\Gamma_{1}, \Gamma_{2} < {\rm PSL}_{2}({\mathbb R})$ be Fuchsian groups acting on the upper-half plane ${\mathbb H}^{2}=\{z \in {\mathbb C}: {\rm Im}(z)>0\}$ so that ${\mathbb H}^{2}/\Gamma_{j}$ has signature $(0;k,\stackrel{n+1}{\ldots},k)$. Let $\Gamma_{j}'$ be the commutator subgroup of $\Gamma_{j}$. If $\Gamma'_{1}=\Gamma'_{2}$, then $\Gamma_{1}=\Gamma_{2}$.
\end{corollary}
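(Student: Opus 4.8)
The plan is to dualise the statement into the setting of Section \ref{Sec:2} and then quote Theorem \ref{unicidad-p}. First I would recall that a cocompact Fuchsian group $\Gamma_j$ of signature $(0;k,\stackrel{n+1}{\ldots},k)$ has presentation $\langle c_1,\dots,c_{n+1}\mid c_1^k=\dots=c_{n+1}^k=c_1\cdots c_{n+1}=1\rangle$, with $c_1,\dots,c_{n+1}$ the canonical elliptic generators. Abelianising, one gets $\Gamma_j/\Gamma_j'\cong{\mathbb Z}_k^{n+1}/\langle(1,\dots,1)\rangle\cong{\mathbb Z}_k^{n}$, and a short computation shows that under this isomorphism the image $\bar c_i$ of each $c_i$ has order exactly $k$. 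Hence no nontrivial power of a conjugate of any $c_i$ lies in $\Gamma_j'$, so $\Gamma_j'$ is torsion free and $F_j:={\mathbb H}^2/\Gamma_j'$ is a compact Riemann surface. Since $\Gamma_j'$ is characteristic in $\Gamma_j$ we have $\Gamma_j\leq N_{{\rm PSL}_2({\mathbb R})}(\Gamma_j')$, so $H_j:=\Gamma_j/\Gamma_j'\cong{\mathbb Z}_k^{n}$ acts on $F_j$ with $F_j/H_j={\mathbb H}^2/\Gamma_j\cong{\mathbb P}^1$, the branch points being exactly the $n+1$ cone points and the branching order over the $i$-th one being the order of a conjugate of $\bar c_i$, namely $k$. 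Thus $(F_j,H_j)$ is a generalized Fermat pair of type $(k,n)$.

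Next I would observe that the numerical hypothesis of Theorem \ref{unicidad-p} is automatic here: hyperbolicity of the orbifold ${\mathbb H}^2/\Gamma_j$ means $2-(n+1)(1-1/k)<0$, which rearranges to exactly $(k-1)(n-1)>2$; in particular $g_{(k,n)}>1$, and since we are in characteristic $0$ there is no further restriction to impose.

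With this in hand, suppose $\Gamma_1'=\Gamma_2'=:\Gamma'$, so that $F_1=F_2=:F={\mathbb H}^2/\Gamma'$ and, since $\Gamma'$ is characteristic in each $\Gamma_j$, both $H_1$ and $H_2$ are subgroups of $N_{{\rm PSL}_2({\mathbb R})}(\Gamma')/\Gamma'$. As $F$ has genus $\geq 2$, the usual uniformisation argument — every automorphism of $F$ lifts to ${\rm PSL}_2({\mathbb R})$ and must normalise the deck group $\Gamma'$, with kernel $\Gamma'$ — identifies ${\rm Aut}(F)$ with $N_{{\rm PSL}_2({\mathbb R})}(\Gamma')/\Gamma'$, and inside this group both $H_1$ and $H_2$ are generalized Fermat groups of type $(k,n)$ of $F$. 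By Theorem \ref{unicidad-p} (together with Theorem \ref{teo1}) such a group is unique on $F$, so $H_1=H_2$; pulling this equality back along the quotient $N_{{\rm PSL}_2({\mathbb R})}(\Gamma')\to N_{{\rm PSL}_2({\mathbb R})}(\Gamma')/\Gamma'$, whose kernel $\Gamma'$ is contained in both $\Gamma_1$ and $\Gamma_2$, gives $\Gamma_1=\Gamma_2$.

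The real content is Theorem \ref{unicidad-p}; the points requiring care are the bookkeeping in the first paragraph — confirming that the abelianisation is ${\mathbb Z}_k^n$ and, crucially, that \emph{all} the cone orders equal $k$, so that $(F_j,H_j)$ is genuinely a generalized Fermat pair and not a degenerate quotient — and the faithfulness input (genus $\geq 2$) used to identify ${\rm Aut}(F)$ with $N_{{\rm PSL}_2({\mathbb R})}(\Gamma')/\Gamma'$.
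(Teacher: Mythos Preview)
Your argument is correct and is exactly the intended one: the paper states this corollary without proof, as an immediate consequence of the uniqueness in Theorem \ref{unicidad-p} via the standard uniformisation dictionary (identifying ${\mathbb H}^2/\Gamma_j'$ with a generalized Fermat curve and $\Gamma_j/\Gamma_j'$ with a generalized Fermat group, cf.\ \cite{GHL09}). Your write-up simply fills in the routine details the paper leaves implicit, including the verification that $(k-1)(n-1)>2$ is forced by hyperbolicity and that $\Gamma_j'$ is torsion free.
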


\s

Theorem \ref{unicidad-p} states that if $(k-1)(n-1)>2$ (and $k-1$ not a power of $p$ in the case $p>0$), then ${\rm Aut}(F_{k,n})$ is a linear group. The following states a similar result for the case that $p>0$ and $k-1$ is a power of $p$ under an extra condition.

\s
\noindent
\begin{theorem}\label{teolineal}
Let $p>0$, $(p,k)=1$ and assume that $k-1$ is a power of $p$.
If $(k,n+1)=1$, then ${\rm Aut}(F_{k,n})$ is a subgroup of $\mathrm{PGL}_{n+1}(K)$ and it 
consists of elements $A=(a_{ij})$ such that 
\[
A^t \Sigma_i A^q = \sum_{\mu=0}^{n-2} b_{i,\mu}\Sigma_{\mu},
\]
for a $(n-1)\times (n-1)$ matrix $(b_{i,\mu})$, where $\Sigma_i$ are certain $(n+1)\times (n+1)$ matrices, defined in eq. (\ref{sigma-def}).
\end{theorem}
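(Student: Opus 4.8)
\textbf{Proof proposal for Theorem \ref{teolineal}.}

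The plan is to run the same two-step strategy announced in the introduction: first reduce to the linear case by a degree/normality argument, then pin down the shape of the linear automorphisms by demanding that they preserve the defining ideal of $C^{k}(\lambda_{1},\ldots,\lambda_{n-2})$. For the first step, I would observe that the defining equations in eq.~(\ref{defineGFC}) exhibit $F_{k,n}$ as a strict complete intersection of $n-1$ hypersurfaces of degree $k$, so the canonical sheaf is $\omega_{F}\cong \mathcal{O}_{F}\big((n-1)k-(n+1)\big)\big|_{F}$, and more generally the restriction map from $\mathcal{O}_{\mathbb{P}^{n}}(m)$ to the curve is surjective in the relevant degrees (this is exactly where completeness of the intersection is used, and why one needs the hypothesis controlling the geometry). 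Since $k-1$ is a power of $p$, the $p$-th power map is additive, so each defining polynomial $F_{i}=\lambda_{i}x_{0}^{k}+x_{1}^{k}+x_{i+2}^{k}$ can be rewritten, after raising to the appropriate power, as a quadratic form twisted by a Frobenius; this is the phenomenon flagged in the remark about $\mathrm{PGU}_{3}$. Concretely, writing $q$ for the power of $p$ with $k=q+1$, one has $F_{i}^{q}$ expressible through the bilinear pairing associated to a symmetric matrix $\Sigma_{i}$ in the variables $x_{j}$ and $x_{j}^{q}$, which is how the matrices $\Sigma_{i}$ of eq.~(\ref{sigma-def}) enter. A linear automorphism $A\in\mathrm{PGL}_{n+1}(K)$ preserves the curve iff it sends the span $\langle F_{0},\ldots,F_{n-2}\rangle$ of the defining forms to itself, and translating this through the $\Sigma_{i}$ and the Frobenius twist gives precisely the stated relation $A^{t}\Sigma_{i}A^{q}=\sum_{\mu}b_{i,\mu}\Sigma_{\mu}$.

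For the reduction to linearity, I would argue that the subgroup $L:=\mathrm{Aut}(F_{k,n})\cap\mathrm{PGL}_{n+1}(K)$ of linear automorphisms is normal in $\mathrm{Aut}(F_{k,n})$: any automorphism $\sigma$ acts on the finite-dimensional space $H^{0}(F,\mathcal{O}_{F}(m))$ for each $m$, and because the curve is a complete intersection this space is cut out linearly from $H^{0}(\mathbb{P}^{n},\mathcal{O}(m))$ in a range of degrees; the linear automorphisms are exactly those whose induced action on the homogeneous coordinate ring respects the grading in degree one, a condition visibly stable under conjugation. The quotient $\mathrm{Aut}(F_{k,n})/L$ is then a finite group acting on $F/H_{0}\cong\mathbb{P}^{1}$ permuting the $n+1$ branch points; the hypothesis $(k,n+1)=1$ is what forces this quotient to be trivial, hence $\mathrm{Aut}(F_{k,n})=L\subset\mathrm{PGL}_{n+1}(K)$. (An alternative, closer to the derivation argument mentioned in the introduction, is to use that a non-linear automorphism would induce a nontrivial action on a graded piece whose dimension, computed via the complete-intersection resolution, is incompatible with $(k,n+1)=1$; I would keep both in mind and present whichever is cleaner once the cohomology is written out.)

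The main obstacle I expect is the first step — showing every automorphism is linear — rather than the bookkeeping of the $\Sigma_{i}$ relation, which is essentially linear algebra once the Frobenius-twisted quadratic-form description is in place. In characteristic zero, or in the case $k-1$ not a power of $p$ covered by Theorem \ref{unicidad-p}, linearity follows from the normalizer computation in \cite[Cor.~9]{GHL09} together with uniqueness of $H_{0}$; but here $H_{0}$ need not be normal, so that route is closed, and one must genuinely use the complete-intersection structure and the hypothesis $(k,n+1)=1$. The delicate point is to identify the exact degree $m$ in which $H^{0}(\mathbb{P}^{n},\mathcal{O}(m))\to H^{0}(F,\mathcal{O}_{F}(m))$ is surjective with the right kernel, and to check that the numerical coincidence needed to rule out non-linear automorphisms is governed precisely by $(k,n+1)$; this is where I would spend most of the effort. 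Once linearity is established, unwinding the condition ``$A$ preserves $\langle F_{0},\ldots,F_{n-2}\rangle$'' into matrix form, using $(x_{0}^{q},\ldots,x_{n}^{q})=(x_{0},\ldots,x_{n})^{(q)}$ and the symmetry of the $\Sigma_{i}$, yields the displayed equations and completes the proof.
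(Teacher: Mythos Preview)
Your description of the second step --- the Frobenius-twisted quadratic form $f_{i}=(x_{1},\ldots,x_{n+1})\Sigma_{i}(x_{1}^{q},\ldots,x_{n+1}^{q})^{t}$ and the translation of ``$\sigma$ preserves the defining ideal'' into $A^{t}\Sigma_{i}A^{q}=\sum_{\mu}b_{i,\mu}\Sigma_{\mu}$ --- is correct and matches Proposition~\ref{powerp} of the paper exactly.

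The gap is in the linearity step. Your first argument claims that $\mathrm{Aut}(F_{k,n})/L$ acts on $F/H_{0}\cong\mathbb{P}^{1}$; but this would require every automorphism to normalize $H_{0}$, which you yourself note is precisely what fails when $k-1$ is a power of $p$. Even granting such an action, there is no reason a permutation group on $n+1$ points should be constrained by the condition $(k,n+1)=1$: the number $k$ simply does not appear. Your alternative about ``dimensions of graded pieces being incompatible with $(k,n+1)=1$'' is too vague to evaluate and does not correspond to the argument the paper actually gives.

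What the paper does is a Picard/Jacobian torsion argument, and this is where the ``derivation'' of the introduction lives. Set $D=\mathcal{O}_{F}(1)$ and $T_{\sigma}:=\sigma^{*}D-D\in\mathrm{Jac}(F)$; then $\sigma$ is linear iff $T_{\sigma}=0$. Since $\sigma$ preserves $\omega_{F}=\mathcal{O}_{F}\big((n-1)(k-1)-2\big)$, the class $T_{\sigma}$ is $\big((n-1)(k-1)-2\big)$-torsion. The map $\sigma\mapsto T_{\sigma}$ satisfies the cocycle (``derivation'') rule $T_{\sigma\tau}=\sigma T_{\tau}+T_{\sigma}$; combining this with the normality of $L$ (Lemma~\ref{normL}) and $T_{\ell}=0$ for $\ell\in L$, one gets that $T_{\sigma}$ is $H_{0}$-invariant. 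But the norm map $\pi^{*}\pi_{*}$ for $\pi:F\to F/H_{0}\cong\mathbb{P}^{1}$ is multiplication by $|H_{0}|=k^{n}$ on $H_{0}$-invariant classes and factors through $\mathrm{Jac}(\mathbb{P}^{1})=0$, so $k^{n}T_{\sigma}=0$. Since $(n-1)(k-1)-2\equiv -(n+1)\pmod{k}$, the hypothesis $(k,n+1)=1$ makes the two torsion orders coprime, forcing $T_{\sigma}=0$. This is the precise mechanism by which $(k,n+1)=1$ enters, and it is what your proposal is missing.
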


\s




\section{Hyper-osculating points of $C^{k}(\lambda_{1},\ldots,\lambda_{n-2})$}\label{Sec:3}
In this  section we demonstrate, in characteristic zero or in characteristic $p>k^{n-1}$, that the set $F(H_{0})$  of fixed points of the generalized Fermat 
group $H_{0}$ coincides with the set of hyper-osculating points of the curve $F_{k,n}=C^{k}(\lambda_{1},\ldots,\lambda_{n-2})$.

We begin by explaining the theory of hyper-osculating points of curves over
fields of characteristic $0$ following essentially  \cite{GrHa94}. In positive characteristic a variety of 
new, very interesting phenomena appear.
Also all definitions need appropriate modification in order to work. 
For the positive characteristic case
we will follow the Laksov approach \cite{Lak81}, \cite{Lak84},
since his theory was successful in giving a version of the 
generalized Pl\"ucker formulas. 

Essentially the results of Laksov, for the case of generalized Fermat 
curves, show that if we assume that the characteristic $p>k^{n-1}$, 
then we have exactly the same behavior as in characteristic $0$.

For a curve $C$ (non-singular, projective) defined over a field $K$ we 
consider the function field $K(C)$ which plays the role of the field of 
meromorphic functions.  The points of the curve can be seen as places 
(equivalence classes of valuations) and a function $f$ in $K(C)$ is called 
{\em holomorphic} at $P$ if $v_P(f) \geq 0$, where $v_P(f)$ is the valuation of 
$f$ at $P$. Holomorphic  functions admit Taylor  expansions at the completions of the valuation rings. For the general theory of functions fields over arbitrary 
fields we refer to \cite{Sti93}, \cite{Gol13}.

\s
\subsection{Preliminaries on hyper-osculating points of curves.}
Let $C$ be a projective smooth curve of the projective space ${\mathbb P}^n$. Let us consider an $s$-plane
$\Pi \subset {\mathbb P}^n$, $1\leq s\leq n-1$, 
and let us define the  multiplicity of $\Pi$ in $P \in C$ as
\begin{center}
${\rm mult}_{P}(\Pi \cap C):=\mbox{Order of contact of  $\Pi$ and $C$ in $P$}.$  
\end{center}

It is known that there exists a unique $s$-plane, denoted by $\Pi(s,P)$, such that 
$${\rm mult}_{P} (\Pi(s,P) \cap C)\geq s+1,$$ and that there exists at most a finite number of points $P\in C$ such that
$${\rm mult}_{P} (\Pi(s,P) \cap C)> s+1.$$

 The $s$-plane $\Pi(s,P)$ is called the {\it osculating $s$-plane of $C$ at $P$} and a point $P\in C$ is called a {\it hyper-osculating point} if 
\begin{center}
 ${\rm mult}_{P}(\Pi(n-1,P) \cap C)>n$. 
\end{center}

\s
\noindent
\begin{remark}
\label{re:inv-aut-pn}
  Let $\varphi \in {\rm Aut}({\mathbb P}^n)\cong {\rm PGL}_{n+1}(K)$. Observe that  
 \begin{center}
 ${\rm mult}_{\varphi(P)}(\varphi(\Pi) \cap \varphi(C))={\rm mult}_{P}(\Pi \cap C)$.
 \end{center}
In particular, $P$ is a hyper-osculating point of $C$ if and only if $\varphi(P)$ is a hyper-osculating point of 
 $\varphi(C)$.
\end{remark}

\s

\subsection{Laskov's theory of osculating planes}
Let $C$ be a  smooth curve of genus $g$ over a general field $K$ and let
$D$ be  a  divisor in $C$. Moreover, let $V$ be a linear system 
in $H^0(C,D)$ of projective dimension $n$. We note $\deg D$ the degree of the divisor $D$.\\

Tensor powers of the 
sheaf of differentials can be interpreted as 
\[
 (\Omega_C^1)^{\otimes m} = I^m/I^{m+1}, 
\]
where $I$ is the ideal defining the diagonal in the product $C\times C$. 
Let $p,q$ be the two projections $C\times C$ into the first and second 
factor respectively. Laksov defined the bundle of principal parts 
$P^m(D)=p_*(q^* \mathcal{O}_C(D) | C(m))$, where $C(m)$ is the subscheme of 
$C\times C$ 
defined by $I^{m+1}$. 
He then introduced a family of maps 
\[
v^m(D): H^0(C,D)_C:=H^0(C,D)\otimes_{K} \mathcal{O}_C \rightarrow P^m(D)
\]
and the corresponding map $v^m:V_C:=V\otimes_{K} \mathcal{O}_C \rightarrow P^m(D)$. Let $B^m$ and $A^m$ be the image and the cokernel of $v^m$.
The Corollary $2$ of \cite{Lak84} implies that  there are integers 
\[
0=G_0 < G_1 < \cdots < G_{n} \leq \deg D  < G_{n+1}=\infty
\] such that  $\mathrm{rank}B^j=(s+1)$ for  $G_s \leq j < G_{s+1}$. The above
sequence is called the {\em gap sequence} of the linear system $V$. If $G_m=m$ for 
$m=0,1,\ldots,n$ then the gap sequence is called {\em classical}. 

\s
\noindent
\begin{definition}[Associated Curves]
The surjection $V_C\rightarrow A^s$ induced by the map $v^{b_s}$ defines a 
map 

 \[
 f_s : C \rightarrow \mathbb{G}(s,n)\]
to the grassmanian of $s$-planes in $\mathbb{P}^n$. The $s$-plane in $\mathbb{P}^n$ is 
called the {\it associated $s$-plane} to $V$ at $P$, and the degree 
$d_s$ of the map $f_s$ is called the $s$-rank of the linear system $V$.

The grassmanian can be embedded in terms of the Pl\"ucker coordinates in a projective space 
$\mathbb{P}^N$, where $N=\binom{n+1}{s+1}-1$.
We will denote by $b_s(P)$ the ramification index and by $b_s$ the sum of all ramification indices of 
the composition $C \stackrel{f_s}{\longrightarrow} 
\mathbb{G}(s,n) \rightarrow \mathbb{P}^N$. The image of the later map is 
called the $s$-associated curve. 
\end{definition}

\s
\noindent
\begin{remark}
Geometrically $d_s$ can be interpreted as the number of associated $s$-planes to $V$ which intersect a generic  $(n-s-1)$-plane 
  of $\mathbb{P}^n$.  In addition, we have that  $d_s=\mathrm{rank} A^s$.  See section $5$ of the article \cite{Lak84}.
\end{remark}

\s

Let $e_0,e_1,\ldots, e_n$ be a basis of $V$.  Using the canonical maps
$v^0: V_C\rightarrow \mathcal{O}_C(D)$, we can prove that this basis induces a set of  linearly independent functions  
$v_0, v_1,...,v_n$ belonging to the local ring $\mathcal{O}_{C,P}$, $P\in C$, such that there exists a sequence 
of integers $h_0<h_1<\cdots <h_n$, where $h_i:=\mathrm{Ord}_{P}v_i$.
These integers are called the {\it Hermitian invariants at $P$}.

The $s$-plane associated to the sub-space of $V$ 
spanned by $e_{s+1},...,e_{n}$ is the unique $s$-plane with maximal contact order with $V$ at $P$ 
(the order of contact is equal to $h_{s+1}-h_{0}$). 
This $s$-plane is called the {\it osculating  $s$-plane to $V$ at $P$}.

Let $C$ be a projective smooth curve of the projective space ${\mathbb P}^n$. If
$f_0:C \rightarrow {\mathbb P}^n$ is the natural embedding defined by the
inclusion $C \subset {\mathbb P}^n$, and the divisor $D$ is the inverse image of a 
hyperplane $\Pi$ of  $\mathbb{P}^n$, we obtain that $h_0=0$ for all $P\in C$ 
and that the concepts of {\it osculating $s$-plane to $V$ at $P$} and {\it osculating  $s$-plane of $C$ at $P$} 
coincide.

Additionally, given a local uniformizer $z$ at the point $p$, the normal form of $f_0$ in $P$ is obtained in the following manner:  
$$f_0(z):=[v_0(z):\cdots: v_{n}(z)].$$


When the characteristic $p$ is small, then a lot of new phenomena appear, 
however for  $p>\deg D$ the situation is similar as in characteristic zero:

\s
\noindent
\begin{theorem}[See \mbox{\cite[Th. 15]{Lak84}}] \label{th:Laksov} 
 Assume that the characteristic $p$ of the ground field is zero or strictly grater than $\deg D$. 
 Fix a point $P\in C$ and let 
 $h_0,  h_1,\ldots, h_n$
 be the Hermite invariants of the linear system $V$ at $P$. Then:
 \begin{enumerate}
  \item The linear system $V$ has classical gap sequence, i.e. $G_m=m$ for $m=0,1,\ldots,n$. 
  \item The ramification index $b_s(P)$ of $f_s$ at $P$ is equal to $h_{s+1}-h_s-1$ for $s=0,1,\ldots,n-1$. 
  \item  The Pl\"ucker formulas take the form: 
  \[
   d_{s+1} - 2d_s + d_{s-1} =(2g-2)- b_s \mbox{\;for } s=0,1,\ldots,n-1,
  \]
  where $d_{-1}=0$ and $d_{n}=0$.
 \item The osculating and associated $s$-planes to $V$ at $P$ coincide. 
 \end{enumerate}
\end{theorem}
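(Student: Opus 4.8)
The plan is to follow Laksov's argument and reduce the positive-characteristic statement, via the hypothesis $p>\deg D$, to the classical Griffiths--Harris picture. The crux is part (1), the classicality of the gap (order) sequence; once that is in hand, parts (2)--(4) are essentially the characteristic-zero computations carried out with Hasse derivatives in place of ordinary derivatives.

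For (1) I would argue as follows. Working with a local parameter $z$ at a generic point and the normal form $f_0(z)=[v_0(z):\cdots:v_n(z)]$, an integer $j$ is an order of $V$ precisely when some Hasse--Wronskian minor built from the $j$-th Hasse derivative of the normal form does not vanish identically; and one has the elementary lemma (going back to Hasse) that if $j$ is an order and $\binom{j}{i}\not\equiv 0\pmod p$ for some $0\le i\le j$, then $i$ is an order too. Now by Laksov's Corollary $2$ one has $G_n\le\deg D$, and in fact a section of $V$ vanishing to order $h$ at a point of $C$ forces $h\le\deg D$; hence in our setting every order — and, more importantly below, every Hermite invariant at every point — is $<p$. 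For an integer $j<p$, Lucas' theorem gives $\binom{j}{i}\not\equiv 0\pmod p$ for all $0\le i\le j$, so the set of orders is downward closed; being a set of $n+1$ nonnegative integers with this property, it must equal $\{0,1,\ldots,n\}$, which is exactly $G_m=m$.

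Granting classicality, the enabling remark for (2)--(4) is that, since every Hermite invariant $h_i$ at every point is $\le\deg D<p$, all the binomial coefficients $\binom{h_i}{j}$ with $0\le j\le h_i$ are units in $K$; hence Hasse derivatives of the normal form behave exactly like ordinary derivatives divided by invertible factorials, and the generalized Vandermonde determinants controlling the leading terms of the Pl\"ucker coordinates of $f_0\wedge f_0'\wedge\cdots\wedge f_0^{(s)}$ are nonzero for the same reason they are in characteristic zero. From this one gets: for (4), that the $s$-plane $f_s(P)$ cut out by $v^{b_s}$ is spanned by the first $s+1$ members of the $P$-adapted basis, i.e.\ coincides with the osculating $s$-plane (contact order $h_{s+1}-h_0$); for (2), that the order of vanishing at $P$ of the Pl\"ucker coordinates of the $s$-th wedge map is $h_{s+1}-h_s-1$, by the same normal-form bookkeeping as in the classical case; and for (3), the Pl\"ucker relations $d_{s+1}-2d_s+d_{s-1}=(2g-2)-b_s$, by Riemann--Roch/Riemann--Hurwitz applied to the degrees $d_s=\deg f_s^*\mathcal{O}(1)$ of the associated curves, using Laksov's exact sequences among the principal-parts bundles $P^m(D)$ and the images $B^m$, whose ranks jump as predicted exactly because the gap sequence is classical.

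The main obstacle I anticipate is (1): making precise the interaction between the combinatorics of the order sequence and $p$-divisibility of binomial coefficients, and in particular nailing down the uniform bound $h_i\le\deg D<p$ on Hermite invariants at \emph{all} points, which is what licenses Lucas' theorem everywhere and turns the remainder into bookkeeping. Parts (2)--(4) should then be routine, being line-by-line translations of the Griffiths--Harris arguments with ordinary derivatives replaced by Hasse derivatives whose relevant coefficients are, by that bound, invertible.
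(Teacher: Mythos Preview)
The paper does not prove this theorem at all: it is quoted verbatim from Laksov's article (the theorem header reads ``See \cite[Th.~15]{Lak84}'') and is used as a black box in the analysis of hyper-osculating points of $F_{k,n}$. There is therefore no proof in the paper to compare your proposal against.

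That said, your sketch is a reasonable outline of how one reconstructs Laksov's argument, and the key reduction you identify --- that the hypothesis $p>\deg D$ forces all Hermite invariants to lie below $p$, so that the relevant binomial coefficients are units and the Hasse-derivative machinery collapses to the classical one --- is indeed the heart of the matter. One point to be careful with: your argument for~(1) uses downward-closure of the order set via Lucas' theorem, but you should make sure the ``orders'' you are speaking of are the global gap integers $G_m$ rather than the pointwise Hermite invariants $h_i(P)$; Laksov's bound $G_n\le\deg D$ (his Corollary~2) is what you need, and you invoke it correctly. For a detailed proof you would still need to cite or reproduce Laksov's exact-sequence machinery for the principal-parts bundles to get~(3), which is more than ``bookkeeping''; but since the paper itself simply defers to \cite{Lak84}, a sketch at this level is already more than what the paper provides.
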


\s


\subsection{The hyper-osculating points of $F_{k,n}$}
 Let $f_0:F_{k,n} \rightarrow {\mathbb P}^n$ be the natural embedding defined by the
inclusion $F_{k,n}\subset {\mathbb P}^n$. Let $P$ be a point in $F(H_0)$ and let $z$ be a local uniformizer at $P$.
The following lemma helps us to find the normal form of $f_0$ around $z(P)=0$.

 Let $\Pi$ be a hyperplane section of the projective space ${\mathbb P}^n$ and $D= f_0^{\star}(\Pi)$
the inverse image divisor of $\Pi$. 
Using the Bezout theorem we obtain that $\deg D=k^{n-1}$. 
In the rest of this section Theorem \ref{th:Laksov} will be used quite a lot, 
for this reason we will impose, as a general hypothesis in the entire rest of the section, 
that the characteristic of the ground field is zero, or stricly greater than $k^{n-1}$.

\s
\noindent
\begin{lemma}
\label{le:for-nor}
Let us conserve the previously defined notations. Assume that we are working over a field of zero  characteristic  or stricly greater than $k^{n-1}$.
Then there exists a sequence of $n-1$ integers, 
\[1=l_0<2=l_1<l_2<\cdots <l_j<\cdots <l_{n-2}\leq k^{n-2},\]
 such that the normal form of $f_0$ around $z(P)=0$ is the following:
 \begin{center}
 $f_0(z)=[1:z: g_0(z^{k}):g_1(z^{k}):\cdots :g_{i}(z^{k}):\cdots : g_{n-2}(z^{k})]$
\end{center}
where the   $g_i$ admit an expansion  $g_i(z)=z^{l_i}+\cdots+\cdots$. 
\end{lemma}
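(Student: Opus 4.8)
The plan is to work locally at a fixed point $P \in F(H_0)$, say $P \in {\rm Fix}(\varphi_{j_0})$ for some $j_0$, and to combine two pieces of information: the action of the stabilizer of $P$ inside $H_0$ on the local uniformizer, and the classical behaviour of Hermite invariants guaranteed by Theorem \ref{th:Laksov} (valid since $\deg D = k^{n-1}$ and $p = 0$ or $p > k^{n-1}$). First I would normalize coordinates. Since the $\varphi_i$ are coordinate scalings and $\varphi_0 \circ \cdots \circ \varphi_n = 1$, up to relabeling we may assume $P \in {\rm Fix}(\varphi_0) = \{x_0 = 0\} \cap F_{k,n}$, and that at $P$ the remaining coordinates $x_1, \dots, x_n$ are all nonzero (the points of $F(H_0)$ where two coordinates vanish are covered symmetrically, or one checks they do not occur here because the $\lambda_i$ are pairwise distinct and distinct from $0,1$). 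Dehomogenize by $x_1$; then $x_0$ is a local coordinate vanishing to order $1$ at $P$ up to a constant. Concretely, the defining equations show that near $P$ each $x_m/x_1$ for $m \geq 2$ is a function of $x_0^k$ — because $x_m^k = -(\lambda_{m-2} x_0^k + x_1^k)$ expresses $(x_m/x_1)^k$ as a power series in $x_0^k$ with nonzero constant term, hence has a $k$-th root which is again a power series in $x_0^k$ after choosing the branch fixed by $\varphi_0$. So if I set $z$ to be a suitable constant multiple of $x_0$ (a genuine uniformizer, $\mathrm{Ord}_P z = 1$), the affine coordinates become $x_1/x_1 = 1$, $x_0/x_1 = c\,z$ for a constant $c$, and $x_m/x_1 = G_{m-2}(z^k)$ where each $G_i$ is a power series in its argument with nonzero constant term.

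Next I would pin down the homogeneous normal form. After projectivizing, $f_0(z) = [c z : 1 : G_0(z^k) : \cdots : G_{n-2}(z^k)]$; rescaling the section (multiplying through by $1/c$, absorbing constants) and reordering to put the two lowest-order slots first gives $f_0(z) = [1 : z : g_0(z^k) : \cdots : g_{n-2}(z^k)]$ where $g_i(z) = G_i(z)/G_i(0)$ minus the appropriate constant has been arranged — more precisely, I would subtract from each $x_m/x_1$ its value at $P$, using a change of projective frame (a linear automorphism of $\mathbb{P}^n$, which by Remark \ref{re:inv-aut-pn} does not affect osculating behaviour) so that the $(m)$-th coordinate vanishes at $P$; this replaces $G_{m-2}(z^k)$ by $g_{m-2}(z^k) := G_{m-2}(z^k) - G_{m-2}(0)$, which vanishes to order $k l_i$ for some integer $l_i \geq 1$, i.e. $g_i(w) = w^{l_i} + \cdots$. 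The two constant-and-linear entries $1, z$ account for Hermite invariants $h_0 = 0$, $h_1 = 1$, and the entries $g_i(z^k)$ contribute orders $k l_0 < k l_1 < \cdots < k l_{n-2}$ (strict because the $g_i(z^k)$ must be linearly independent as functions, which is where the pairwise-distinctness of the $\lambda_j$ enters). This forces $l_0 < l_1 < \cdots < l_{n-2}$; the claim $l_0 = 1$, $l_1 = 2$ is the assertion that the lowest-order vanishing among the $g_i(z^k)$ is exactly $k$ and the second-lowest exactly $2k$, which I would extract by writing the linear combination $\lambda_i x_0^k + x_1^k + x_{i+2}^k = 0$ explicitly: modulo $z^{2k}$ the curve near $P$ is governed by a single linear relation among the $(x_m/x_1)^k$, so exactly one combination of the $g_i(z^k)$ has a nonzero $z^k$-coefficient and, among the rest, exactly one has a nonzero $z^{2k}$-coefficient, after a further linear change of frame among the last $n-1$ coordinates. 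Finally, the upper bound $l_{n-2} \leq k^{n-2}$ follows from $h_n \leq \deg D = k^{n-1}$ (Theorem \ref{th:Laksov}(1) gives the classical gap sequence, and $h_n = k l_{n-2} \leq k^{n-1}$).

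The main obstacle I anticipate is the bookkeeping that produces the \emph{strict} inequalities $l_0 < l_1 < \cdots < l_{n-2}$ together with the identifications $l_0 = 1$, $l_1 = 2$: one must argue that after the allowed linear changes of projective frame the $n-1$ series $g_i(z^k)$ can be taken to have pairwise distinct orders of vanishing, and that these orders are not artificially inflated. This is exactly the step that uses that the $\lambda_0 = 1, \lambda_1, \dots, \lambda_{n-2}$ are pairwise distinct and different from $0$: the Vandermonde-type nondegeneracy of the system $\{\lambda_j x_0^k + x_1^k + x_{j+2}^k = 0\}$ guarantees that no nontrivial combination of the $g_i(z^k)$ vanishes to order higher than forced, hence the Hermite invariants are the ``generic'' ones $0, 1, k, 2k, k l_2, \dots$. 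Everything else — the existence of the power-series $k$-th roots, the reduction to affine coordinates, the translation of Hermite invariants into the shape of $f_0$ — is routine once the characteristic hypothesis lets us invoke Theorem \ref{th:Laksov}.
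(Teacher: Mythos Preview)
Your approach is essentially the paper's: normalize so that $P$ lies in a coordinate hyperplane, take the vanishing coordinate as uniformizer, observe that the remaining affine coordinates are power series in $z^k$, row-reduce via linear changes of frame to get strictly increasing orders, and invoke Laksov for the bound $h_n\le \deg D=k^{n-1}$, hence $l_{n-2}\le k^{n-2}$.

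The one place where the paper does more than you is precisely the step you flag as the ``main obstacle''. You assert that $l_0=1$ and $l_1=2$ follow from a ``Vandermonde-type nondegeneracy'' of the system, but you do not actually check it. The paper makes this explicit: writing $(x_{j+2}/x_1)=\rho_j\,(1+z^k/\hat\lambda_{j-1})^{1/k}$ and expanding via the binomial series, the $z^{ik}$-coefficient of the $(j+2)$-th coordinate is $c_{(i,j)}=\rho_j\hat\lambda_{j-1}^{-i}\binom{k^{-1}}{i}$. Then $l_0=1$ amounts to $c_{(1,j)}\ne 0$ for some $j$, and $l_1=2$ amounts to the $2\times 2$ minor $c_{(1,1)}c_{(2,2)}-c_{(1,2)}c_{(2,1)}\ne 0$; both reduce to $\binom{k^{-1}}{1},\binom{k^{-1}}{2}\ne 0$ together with $\hat\lambda_0\ne\hat\lambda_1$. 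Your Vandermonde intuition is exactly right (factoring $\rho_j$ from columns and $\binom{k^{-1}}{i}$ from rows leaves a genuine Vandermonde in the $\hat\lambda_{j-1}^{-1}$), but it only works once you know the binomial factors are nonzero. In characteristic zero this is immediate; in characteristic $p$ it is not, and the paper supplies the check that $\binom{k^{-1}}{i}\ne 0$ for the relevant range of $i$ under the hypothesis $p>k^{n-1}$. You call the $k$-th-root step ``routine'', but this nonvanishing is the content of the characteristic hypothesis at this stage and should not be skipped.

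A minor point: the bound $h_n\le\deg D$ is not part (1) of Theorem~\ref{th:Laksov} (which concerns the gap sequence $G_m$, not the pointwise Hermite invariants); the paper cites a separate result of Laksov for it, valid in any characteristic.
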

\begin{proof}
 We will begin by the case of the characteristic of the field being zero.\\

 Using linear substitutions in the system of equations which define the
curve $F_{k,n}=C^{k}(\lambda_{1},\ldots,\lambda_{n-2})$,  followed by an automorphism of ${\mathbb P}^{n}$, 
we can suppose that $F_{k,n}=C^{k}(\hat{\lambda}_{1},\ldots,\hat{\lambda}_{n-2})$ and that $P\in {\rm Fix}(\varphi_1)$, 
These transformations do not affect the condition of being or not being a point of hyper-osculation,
see Remark \ref{re:inv-aut-pn}.

\s
 
In order to simplify the notations, we say that $\hat{\lambda}_0=1$.
Then the point $P$ in homogeneous coordinates is 

\begin{center}
 $P:=[1:0:\rho_1:\rho_2:\cdots:\rho_{n-1}]$,
\end{center}
where $\rho_i^{k}=-\hat{\lambda}_{i-1}$, $0\leq i\leq n-1$.\\

Let $f_0:F_{k,n}\rightarrow {\mathbb P}^n$ be the natural embedding defined by the inclusion $F_{k,n}\subset {\mathbb P}^n$,
and let us consider the following Taylor series centered in $t=0$:

\begin{center}
$\sqrt[k]{1+t}=\sum_{i=0}^{\infty} \binom {k^{-1}} {i}t^i$, $|t|<1$,
\end{center}
where 
\begin{equation} \label{binom1}
\binom{k^{-1}}{i}:=\frac{\Gamma(k^{-1}+1)}{\Gamma(i+1)\Gamma(k^{-1}-i+1)}=
\prod_{\nu=1}^i \frac{k^{-1}+1-\nu}{\nu} =  \frac{1}{i! k^i}\prod_{\nu=1}^{i-1}
(1- k\nu)
 \in \mathbb{Q}.
\end{equation} 

\s
\noindent
\begin{remark}
The binomial coefficient $\binom{n}{i}$ for $n,i \in \mathbb{N}$ has always meaning in fields of positive characteristic $p$, since we can  always reduce it modulo $p$. The binomial coefficients  in eq. (\ref{binom1}) are not defined 
if $p \leq  i$. 
\end{remark}

\s
\noindent
\begin{remark} \label{re:non-zero-bin}
If $Mk<p$ then $\binom{k^{-1}}{i}\neq 0$ for all 
$i<M$. 
Indeed, 
by eq. (\ref{binom1}) we observe that for $1\leq \nu  \leq i-1<M$ the quantity $k\nu-1 \not \equiv 0 \mod p$. 
Otherwise, $ 0< \mu p = k\nu-1 < p/M \cdot i -1<p$ for $\nu,\mu \in \mathbb{N}$, a contradiction. 
\end{remark}

\s

Using this expansion, we can describe $f_0$ explicitly in a neighborhood of $P$. 
 Let $z$ be a local uniformizer at $P$, we express locally 
\[ f_0(z)=
\left[
1:z:\sum_{i=0}^{\infty} c_{(i,1)}z^{ik}:\sum_{i=0}^{\infty} c_{(i,2)}z^{ik}: \cdots: 
  \sum_{i=0}^{\infty} c_{(i,n-1)}z^{ik}
\right]
  ,\]
where $\displaystyle c_{(i,j)}:=\frac{\rho_{j}}{\hat{\lambda}_{j-1}^i}\binom{k^{-1}}{i}$, $1\leq j\leq n-1$, $i\geq 0$.

We can prove by induction on $j$,  that for each integer  $1\leq j\leq n-2$, there exists a sequence of $n-2$ integers
 \begin{center}
 $1=l_0<2=l_1<l_2 <\cdots <l_j\leq \cdots \leq \cdots  \leq l_{n-2}$,
 \end{center}
 for which there exists a change of coordinates of ${\mathbb P}^{n}$ (which is to say, an automorphism of ${\mathbb P}^n$)
 such that
\[ f_0(z)=\left[1:z:\sum_{i=1}^{\infty} d_{(i,1)}z^{ik}:\sum_{i=2}^{\infty} d_{(i,2)}z^{ik}: 
  \sum_{i=l_2}^{\infty} d_{(i,3)}z^{ik}:\cdots: \sum_{i=l_{n-2}}^{\infty} d_{(i,n-1)}z^{ik} \right],
  \]
   where $d_{(l_{m-1},m)}=1$ for all  $1\leq m \leq n-2$.\\
   
   By virtue of part $(iii)$ of theorem $10$ of  \cite{Lak84}
   we obtain that the Hermite invariant $h_{n}$ is less than or equal to $\deg D= k^{n-1}$ (It is worth mentioning that this 
   result is valid in the case of the positive characteristic). Implying that $l_{n-2}\leq k^{n-2}$.
   This will prove  the lemma in  the case of characteristic zero.\\
   
   Using the fact that $h_{n}\leq k^{n-1}$, and Remark \ref{re:non-zero-bin}, we can ensure that for fields of characteristic
   $p$ such that $k^{n-1}<p$ the method of recurrence raised previously functions in the same way. 
   However the sequence of integers  $l_2<l_3<\cdots < l_{n-2}$  
   obtained in the case of the positive characteristic could differ from the sequence of integers obtained in  
   the case of characteristic zero.\\
   
Let us now do some steps of the  induction in order to indicate some problems
that may occur over fields of positive characteristic:

{ \small \[ f_0(z)=\left[1:z:c(0,1)+c(1,1)z^k+\cdots : c(0,2)+c(1,2)z^k+\frac{\rho_1}{\hat{\lambda}^2_{1} }
\binom{k^{-1}}{2} z^{2k} +\cdots: \ldots : 
  \right],
\]
}

In the first step we subtract the constant function $1$ multiplied by 
$c(0,i)$ from all but the first two  projective coordinates of $f_0(z)$ arriving at 
\[
f_0(z)=\left[1:z:c(1,1)z^k+\cdots : c(1,2)z^k+\frac{\rho_2}{\hat{\lambda}^2_{1} }
\binom{k^{-1}}{2} z^{2k} +\cdots: \ldots : 
  \right],
\] 

The coefficient $c(1,1)=\frac{\rho_1}{\hat{\lambda}_0} \binom{k^{-1}}{1} \neq 0$
so we can divide the third coordinate of $f_0(z)$ by $c(1,1)$ in order 
to have coefficient of $z^k$ equal to $1$. Then we subtract from 
all but the first two coefficients the third coefficient in order to 
eliminate the term $z^k$.  The coefficient 
of $z^{2k}$ in the fourth coordinate equals to 
\begin{eqnarray*}
c(1,1)c(2,2)-c(1,2)c(2,1) & = & \frac{\rho_1}{\hat{\lambda}_0} \binom{k^{-1}}{1} 
\frac{\rho_2}{\hat{\lambda}^2_{1} }
\binom{k^{-1}}{2} -\frac{\rho_2}{\hat{\lambda}_1} \binom{k^{-1}}{1}
\frac{\rho_1}{\hat{\lambda}_0^2} \binom{k^{-1}}{2} \\
 & = & 
 \frac{\rho_1 \rho_2}{ \hat{\lambda}_{0}\hat{\lambda}_{1} } \binom{k^{-1}}{2} \binom{k^{-1}}{1}
 \left(  
\frac{1}{\hat{\lambda}_1} -\frac{1}{\hat{\lambda}_0} 
 \right) \neq 0, \\
\end{eqnarray*}
since 
\[
\binom{k^{-1}}{2}=
\frac{k^{-1} (k^{-1}-1)}{2}=\frac{1-k}{2 k^2}\neq 0 \;\mbox{ and}\; \hat{\lambda}_1\neq 1
\]

We can now normalize the coefficient of $z^{2k}$ to $1$ and subtract it multiplied by the appropriate constant  from the 
next coordinate. Doing this subtraction it can happen that  the coefficients of 
$z^{3k},z^{4k}$ etc are also eliminated. So we set $2<l_2$ the first non zero exponent in the above subtraction. We then  
proceed in a similar way until all coordinates are in the form requested by the lemma. 
\end{proof}

The next theorem describes the hyper-osculating points of $F_{k,n}$ and the ramification indices.

\s
\noindent
\begin{theorem}
\label{th:hyperosc} 

Assume that the characteristic $p$ of the ground field is zero or strictly grater than $k^{n-1}$.
 Let  $(n-1)(k-1)>2$.  Then the following  holds: 
 \s
 \begin{enumerate}
 \item The set of hyperosculating points of $F_{k,n}$ is the set $F(H_0)$.
 \s

  \item  If $P \in F(H_0)$, then $b_1(P)=k-2$ and $b_l(P)=k-1$  for all $2\leq l\leq n-1$.
   \end{enumerate}

\end{theorem}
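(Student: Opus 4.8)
The plan is to play the local normal form of Lemma~\ref{le:for-nor} against a global ramification count and show the two must agree. First I would record, for a fixed $P\in F(H_0)$, the Hermite invariants of the hyperplane linear system at $P$: by Lemma~\ref{le:for-nor} the local normal form of $f_0$ at $P$ is $[1:z:g_0(z^k):\cdots:g_{n-2}(z^k)]$ with $g_j(z)=z^{l_j}+\cdots$ and $1=l_0<2=l_1<l_2<\cdots<l_{n-2}\le k^{n-2}$, so
$h_0=0$, $h_1=1$, and $h_{j+2}=k\,l_j$ for $0\le j\le n-2$. Since the $l_j$ are strictly increasing with $l_0=1$ we get $l_j\ge j+1$, hence $h_i\ge k(i-1)$ for $2\le i\le n$, and therefore the flag ramification weight $w(P):=\sum_{i=0}^n(h_i-i)$ satisfies
\[
w(P)\ \ge\ \sum_{i=2}^{n}\big(k(i-1)-i\big)\ =\ \frac{k\,n(n-1)}{2}-\frac{n(n+1)}{2}+1,
\]
all the summands on the right being nonnegative. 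The same lower bound holds at every point of $F(H_0)$.

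Next I would compute the total flag ramification $R:=\sum_{Q\in C}w(Q)$. Using $b_s(Q)=h_{s+1}(Q)-h_s(Q)-1$ from Theorem~\ref{th:Laksov}(2), a telescoping identity gives $w(Q)=\sum_{s=0}^{n-1}(n-s)\,b_s(Q)$, hence $R=\sum_{s=0}^{n-1}(n-s)\,b_s$; substituting the Pl\"ucker relations $b_s=(2g-2)-(d_{s+1}-2d_s+d_{s-1})$ of Theorem~\ref{th:Laksov}(3) and summing by parts (with $d_{-1}=d_n=0$) yields $R=\tfrac{n(n+1)}{2}(2g-2)+(n+1)\deg D$, i.e.\ $R$ is the degree of the Wronskian divisor. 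Since $\deg D=k^{n-1}$ by Bezout and $2g-2=k^{n-1}\big((n-1)(k-1)-2\big)$ by \eqref{genero}, a short computation — the only genuinely calculational step — gives $R=(n+1)k^{n-1}\big(\tfrac{k\,n(n-1)}{2}-\tfrac{n(n+1)}{2}+1\big)$.

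Now I would compare the two. Because $w(Q)\ge 0$ for all $Q$ and $\#F(H_0)=(n+1)k^{n-1}$,
\[
R=\sum_{Q\in C}w(Q)\ \ge\ \sum_{P\in F(H_0)}w(P)\ \ge\ (n+1)k^{n-1}\Big(\tfrac{k\,n(n-1)}{2}-\tfrac{n(n+1)}{2}+1\Big)\ =\ R,
\]
so every inequality is an equality. This forces, on one hand, $w(Q)=0$ for all $Q\notin F(H_0)$; since a hyper-osculating point has $h_n>n$ and hence $w(Q)\ge h_n-n\ge1$, there are no hyper-osculating points outside $F(H_0)$. On the other hand, the bound $h_i\ge k(i-1)$ is an equality for all $i\ge2$ at every $P\in F(H_0)$, i.e.\ $l_j=j+1$ for all $j$; then $h_n=k(n-1)$ and $h_n-n=(k-1)(n-1)-1\ge2>0$ because $(n-1)(k-1)>2$, so every point of $F(H_0)$ is hyper-osculating. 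This is part~(1). Part~(2) then follows by reading off $b_s(P)=h_{s+1}(P)-h_s(P)-1$ from the now explicit sequence $h=(0,1,k,2k,\ldots,(n-1)k)$: $b_0(P)=0$, $b_1(P)=k-2$, and $b_s(P)=k-1$ for $2\le s\le n-1$.

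The step I expect to require the most care is not a computation but the choice of the right global invariant: a hyper-osculating point need not be a ramification point of the last associated map $f_{n-1}$ — its excess contact may be distributed among the earlier associated curves — so counting $b_{n-1}$ alone is insufficient, and one must use the full weighted ramification $R=\sum_{s}(n-s)b_s$, which is exactly the quantity whose local values Lemma~\ref{le:for-nor} controls; once this is realized, the agreement of the local lower bound with the global value of $R$ is automatic. Two minor points remain to be pinned down: that all points of $F(H_0)$ carry the same Hermite sequence — which follows from the reduction used in the proof of Lemma~\ref{le:for-nor} together with the fact that $H_0$ acts transitively on each $\mathrm{Fix}(\varphi_j)$ by linear automorphisms, which preserve Hermite invariants (cf.\ Remark~\ref{re:inv-aut-pn}) — and that $\#F(H_0)=(n+1)k^{n-1}$, i.e.\ the sets $\mathrm{Fix}(\varphi_j)$ are pairwise disjoint, which follows from Theorem~\ref{teo1} since a point lying in two of them would be fixed by an element of $H_0$ that is not a power of a single $\varphi_j$.
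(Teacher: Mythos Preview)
Your argument is correct and follows essentially the same strategy as the paper: bound the local Hermite invariants at points of $F(H_0)$ from below via Lemma~\ref{le:for-nor}, compute the global weighted sum $\sum_{s}(n-s)b_s$ using the Pl\"ucker formulas of Theorem~\ref{th:Laksov}, and observe that the global value exactly matches the sum of the local lower bounds, forcing equality everywhere. The paper organizes the same computation around the individual totals $b_l$ and their lower bounds $\hat b_l$, while you repackage it as the Weierstrass weight $w(P)=\sum_i(h_i-i)$ and the Wronskian degree $R$; the identity $w(Q)=\sum_s(n-s)b_s(Q)$ you derive is precisely what links the two presentations, so the difference is cosmetic. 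One very minor remark: you do not actually need that all points of $F(H_0)$ carry the \emph{same} Hermite sequence, only that each satisfies the lower bound from Lemma~\ref{le:for-nor}; the lemma gives this at every such point (the $l_j$ may a priori depend on $P$), and your equality argument then forces $l_j=j+1$ at each $P$ a posteriori.
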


\s

The following corollary is directly derived from Theorem \ref{th:hyperosc} and Lemma \ref{le:for-nor}.

\s
\noindent
\begin{corollary}
\label{co:hyperosc}
 Let $z$ be  a local chart of  $F_{k,n}$  around a point  $P$. Then the normal form of 
 $f_0$ in  $z(P):=0$  is:
 \begin{enumerate}
  \item If $P \in F(H_0)$
 \[
f_0(z)=
\left[1:z: g_0(z^{k}):g_1(z^{k}):\cdots :g_{i}(z^{k}):\cdots : g_{n-1}(z^{k})
\right],
\]
where the  $g_i$ are holomorphic functions such that $g_i(z)=z^{i+1}+\cdots+\cdots$,
\item If $P \not \in F(H_0)$, then 
\[ 
f_0(z)=\left[1:z: z^2+\cdots :\cdots : z^{(n-1)}+\cdots \right].
\]
 \end{enumerate}
\end{corollary}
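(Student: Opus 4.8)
The plan is to obtain the statement as a bookkeeping consequence of Lemma \ref{le:for-nor}, Theorem \ref{th:hyperosc} and Laksov's description of osculation (Theorem \ref{th:Laksov}); the latter applies since the characteristic is $0$ or $p>k^{n-1}=\deg D$, the equality $\deg D=k^{n-1}$ coming from B\'ezout.

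For $P\in F(H_0)$ I would start from the normal form furnished by Lemma \ref{le:for-nor}: in suitable homogeneous coordinates
\[
f_0(z)=\left[1:z:g_0(z^k):\cdots:g_{n-2}(z^k)\right],\qquad g_i(z)=z^{l_i}+\cdots,
\]
with $1=l_0<2=l_1<\cdots<l_{n-2}\le k^{n-2}$. Since $k\ge 2$ and the $l_i$ increase, the orders of vanishing at $P$ of the $n+1$ coordinate functions, namely $0,\,1,\,kl_0,\,kl_1,\,\dots,\,kl_{n-2}$, are already in increasing order, hence they are exactly the Hermite invariants at $P$: $h_0=0$, $h_1=1$, and $h_{j+2}=kl_j$ for $0\le j\le n-2$. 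Then I would invoke Theorem \ref{th:Laksov}: the associated and osculating planes coincide, and $b_s(P)=h_{s+1}-h_s-1$ for $0\le s\le n-1$. Substituting the ramification indices of Theorem \ref{th:hyperosc}(2), namely $b_1(P)=k-2$ and $b_l(P)=k-1$ for $2\le l\le n-1$, the case $s=1$ gives $h_2-h_1-1=k-2$, i.e.\ $kl_0=k$, and the cases $2\le s\le n-1$ give $h_{s+1}-h_s-1=k-1$, i.e.\ $k(l_{s-1}-l_{s-2})=k$, so $l_{s-1}=l_{s-2}+1$. Together with $l_0=1$ this forces $l_i=i+1$ for all $i$, hence $g_i(z)=z^{i+1}+\cdots$, which is the normal form of part (1).

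For $P\notin F(H_0)$ I would argue directly from Theorem \ref{th:hyperosc}(1): such a $P$ is \emph{not} a hyper-osculating point of $F_{k,n}$, so by the definition of hyper-osculating point ${\rm mult}_P(\Pi(n-1,P)\cap C)=h_n\le n$. Since $0=h_0<h_1<\cdots<h_n$ is a strictly increasing sequence of $n+1$ non-negative integers bounded by $n$, necessarily $h_i=i$ for every $i$. Writing $f_0$ in the normal form $f_0(z)=[v_0(z):\cdots:v_n(z)]$ with $\mathrm{Ord}_P v_i=h_i=i$ and rescaling coordinates so that each leading coefficient equals $1$, one obtains the normal form of part (2).

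There is no genuine obstacle here: all the substance sits in Lemma \ref{le:for-nor} and Theorem \ref{th:hyperosc}, and what remains is to chain the numerical data. The only thing to watch is the bookkeeping itself---that the exponents $0,1,kl_0,\dots,kl_{n-2}$ produced by Lemma \ref{le:for-nor} need no reordering before being read off as Hermite invariants (this uses $l_0<l_1<\cdots$ and $k\ge 2$), and that the ``ramification index of $f_s$ at $P$'' appearing in Theorem \ref{th:hyperosc} is literally the integer $h_{s+1}-h_s-1$ (Theorem \ref{th:Laksov}(2)), so that the two results can be combined. Both hold under the standing hypothesis on the characteristic, and everything else is substitution.
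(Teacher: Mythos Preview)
Your argument is correct and is exactly the derivation the paper has in mind: it states only that the corollary ``is directly derived from Theorem \ref{th:hyperosc} and Lemma \ref{le:for-nor}'', and your proof spells out precisely that bookkeeping, reading off the Hermite invariants from the normal form of Lemma \ref{le:for-nor}, plugging in the ramification indices of Theorem \ref{th:hyperosc}(2) via Theorem \ref{th:Laksov}(2) to pin down $l_i=i+1$, and handling $P\notin F(H_0)$ by the pigeonhole $0=h_0<\cdots<h_n\le n$. Nothing to add.
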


\s

\begin{proof}[Proof of the Theorem \ref{th:hyperosc}.]

Let $P$ be a point in $F(H_0)$. 
Using  part $2$ of Theorem \ref{th:Laksov} and  Lemma  \ref{le:for-nor}, we obtain the following system
of equations:
 
\begin{center}
 $\left \{ \begin{array}{lcc}
           2+b_1(P)& =& k\\
           3+b_1(P)+b_2(P) & = & 2k \\
           4+b_1(P)+b_2(P)+b_3(P) & = & l_2k\\
           \vdots \hspace{1cm} \vdots \hspace{1cm} \vdots \hspace{1cm} \vdots\hspace{0.9cm} \ddots  & \vdots  & \vdots\\
           n+b_1(P)+b_2(P) +b_3(P)+ \cdots +b_{n-1}(P) &= &l_{n-2}k
          \end{array} \right. $
\end{center}
Equivalently, we obtain
\begin{center}
 $\left \{ \begin{array}{ccl}
            b_1(P)& =& k-2\\
            b_2(P)& = & k-1 \\
            b_3(P)& = &(l_2-2)k-1\\
            \vdots & \vdots & \hspace{1cm}\vdots\\
            b_{n-1}(P) & = &(l_{n-2}-l_{n-3})k-1
           \end{array} \right. $

\end{center}

 Observe that  $b_l(P)\geq k-1$ for all $2\leq l\leq n-1$. 
 In particular, $P$ is  a hyper-osculating point.\\

 Since the cardinality of $F(H_0)$ is equal to  $(n+1)k^{n-1}$, we have the following lower bound from the total
ramification indices:
\begin{center}

 $\left \{ \begin{array}{cclc}
            b_1& = & \hat{b}_1:=(n+1)k^{n-1}(k-2)\\
            b_l& \geq & \hat{b}_l:=(n+1)k^{n-1}(k-1) & \mbox{for every }\;2\leq l\leq n-1 
            
           \end{array} \right. $
\end{center}

Observe that in order to finish the demonstration of the theorem, it is necessary and sufficient to prove 
$b_l=\hat{b}_l$, for all $1\leq l\leq n-1$. We will now  prove these equalities.\\

 Consider the following inequality
 
 \begin{center}
  $\displaystyle 0\leq b_l-\hat{b}_l\leq \sum_{l=0}^{n-1}(n-l)(b_l-\hat{b}_l)$,
 \end{center}
 where $b_0=\hat{b}_0=0$. The idea is to show that the right part of the inequality is zero.\\

Remember that the genus of $F_{k,n}$ is given by the following formula:
\[
g_{(k,n)}:=\frac{k^{n-1}(((n-1)(k-1)-2)+2}{2}.
\] 
  Via direct calculation, we obtain the following equality:

\[\sum_{l=0}^{n-1}(n-l)\hat{b}_l=n(n+1)(g_{(k,n)}-1)+(n+1)k^{n-1}.
\]
Using the Pl\"ucker formulas  (part 2 of Theorem \ref{th:Laksov}), we obtain 
\begin{eqnarray*} 
\sum_{l=0}^{n-1}(n-l)b_l & = &\sum_{l=0}^{n-1}(n-l)(2(g_{(k,n)}-1)-\Delta^2d_{l}) \\ &= &n(n+1)(g_{(k,n)}-1)
-\sum_{l=0}^{n-1}((n-l)\Delta^2d_l),
\end{eqnarray*}
where $ \Delta^2 d_{l}=d_{l+1}-2d_l+d_{l-1}$.

From a simple calculation it is obtained that
\[
\sum_{l=0}^{n-1}(n-l)\Delta^2d_l=d_{n}-(n+1)d_1+nd_{-1}.
\]
Since  $d_n=d_{-1}=0$ and $d_1=k^{n-1}$, therefore
\[
\sum_{l=0}^{n-1}(n-l)b_l=n(n+1)(g_{(k,n)}-1)+(n+1)k^{n-1},
\]
which implies that $b_l=\hat{b}_l$ for all $1\leq l\leq n-1$. 
\end{proof}

\s

\section{Complete intersections and linear automorphisms}\label{Sec:5}
Let $\mathbb{P}^{n}$ be the projective space with homogeneous coordinates 
$[x_1:\cdots:x_{n+1}]$. 
Consider the curve $F_{k,n}=C^{k}(\lambda_{1},\ldots,\lambda_{n-2})$ embedded in $\mathbb{P}^{n+1}$   as the  intersection of the $n-1$ hypersurfaces 
$f_{i}:=\lambda_i x_{1}^k+z_{2}^k+z_{3+i}^k=0$ for  $0\leq i \leq n-2$, where $k,n \geq 2$ are integers so that, for $p>0$,  $(k,p)=1$ (see eq. \eqref{defineGFC}).

\s
\noindent
\begin{proposition} \label{sing-set}
The curve $F_{k,n}$ is a nonsingular complete intersection.
\end{proposition}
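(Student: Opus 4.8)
The plan is to verify directly, using the Jacobian criterion, that the curve $F_{k,n}=C^{k}(\lambda_1,\dots,\lambda_{n-2})$ defined by the $n-1$ equations
$f_i = \lambda_i x_0^k + x_1^k + x_{i+2}^k = 0$, $0\le i\le n-2$ (with $\lambda_0=1$),
has everywhere the expected codimension $n-1$ in $\mathbb{P}^n$, and then to observe that a curve cut out set-theoretically by the right number of equations of this type is automatically a \emph{strict} complete intersection. First I would pass to affine charts and compute the $(n-1)\times(n+1)$ Jacobian matrix $\bigl(\partial f_i/\partial x_j\bigr)$. Its entries are: in the column $j=0$, the constant column $(k\lambda_0 x_0^{k-1},\dots,k\lambda_{n-2}x_0^{k-1})^t$; in the column $j=1$, the column $(k x_1^{k-1},\dots,k x_1^{k-1})^t$; and in the columns $j=i+2$ for $i=0,\dots,n-2$ the diagonal block $\mathrm{diag}(k x_2^{k-1},\dots,k x_n^{k-1})$. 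Since $(k,p)=1$, the factor $k$ is a unit, so singularity at $P=[x_0:\cdots:x_n]$ would force the $(n-1)\times(n-1)$ minor obtained by deleting columns $0$ and $1$ to vanish, i.e. $x_2^{k-1}x_3^{k-1}\cdots x_n^{k-1}=0$, together with the vanishing of every minor involving columns $0$ or $1$.

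The heart of the argument is then a short case analysis on which coordinates vanish at a hypothetical singular point $P$. If none of $x_2,\dots,x_n$ vanishes, the deleted-columns minor is nonzero and $P$ is smooth, so at least one of them, say $x_r$ with $r\ge 2$, vanishes; say $x_r=0$ corresponds to equation $f_{r-2}$, which then reads $\lambda_{r-2}x_0^k+x_1^k=0$. I would show that $x_0$ and $x_1$ cannot both vanish: if $x_0=x_1=0$ then every equation $f_i$ gives $x_{i+2}^k=0$, so all coordinates vanish, impossible in $\mathbb{P}^n$. Hence at least one of $x_0,x_1$ is nonzero; replacing column $i+2$ by column $0$ (if $x_0\ne0$) or by column $1$ (if $x_1\ne0$) in the minor and repeating this replacement for every vanishing coordinate $x_r$, one obtains an $(n-1)\times(n-1)$ minor that is a nonzero monomial in the nonvanishing coordinates times a power of $k$ — hence nonzero. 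The only thing to check carefully is that this replacement can be carried out simultaneously for all vanishing coordinates without collisions: since column $0$ and column $1$ each appear in every row, and the equations $f_{r-2}=0$ forcing $x_r=0$ are \emph{distinct} for distinct $r$, at most one vanishing $x_r$ can "need" column $0$ and at most one can "need" column $1$ — but in fact if $x_r=x_s=0$ for $r\ne s$ then $f_{r-2}=\lambda_{r-2}x_0^k+x_1^k=0$ and $f_{s-2}=\lambda_{s-2}x_0^k+x_1^k=0$ force $(\lambda_{r-2}-\lambda_{s-2})x_0^k=0$, so $x_0=0$ and then $x_1=0$, the impossible case again. Therefore at most one coordinate among $x_2,\dots,x_n$ vanishes, and a single column replacement (column $0$ or column $1$, whichever is nonzero) restores a nonzero minor. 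This shows $F_{k,n}$ is nonsingular of pure dimension $1=n-(n-1)$.

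Finally, to upgrade "nonsingular of the expected dimension, cut out by $n-1$ equations" to "\emph{strict} complete intersection", I would invoke the standard fact that if $X\subset\mathbb{P}^n$ is defined by homogeneous polynomials $f_0,\dots,f_{n-2}$ and the scheme $V(f_0,\dots,f_{n-2})$ has pure dimension $n-1-(n-1)=1$ everywhere (equivalently, the $f_i$ form a regular sequence, which follows from the dimension count and Cohen–Macaulayness of the polynomial ring), then $V(f_0,\dots,f_{n-2})$ is reduced and irreducible once it is known to be smooth and connected — and connectedness is immediate since $F_{k,n}$ is smooth of dimension $1$ and is a complete intersection, hence connected by the standard connectedness theorem for complete intersections in $\mathbb{P}^n$. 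Thus the homogeneous ideal of $F_{k,n}$ is generated by the $\mathrm{codim}=n-1$ elements $f_0,\dots,f_{n-2}$, which is exactly the definition of a strict complete intersection. The main obstacle, and the only place demanding care, is the bookkeeping in the Jacobian minor computation — specifically the observation that the $\lambda_i$ being pairwise distinct forces at most one of $x_2,\dots,x_n$ to vanish at any point of the curve; everything else is routine.
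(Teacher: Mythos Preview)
Your proof is correct and follows essentially the same route as the paper: both compute the Jacobian matrix of the defining forms, observe that the last $n-1$ columns are diagonal, and reduce nonsingularity to the key combinatorial fact that no point of the curve can have two vanishing coordinates (since the distinctness of the $\lambda_i$ and the shape of the equations force all coordinates to vanish once two do). Your treatment is somewhat more explicit about the minor bookkeeping and about upgrading to a \emph{strict} complete intersection via the regular-sequence/Cohen--Macaulay argument, whereas the paper handles the primality of the ideal separately in the next proposition; but the core idea is identical.
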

\begin{proof}
The curve is given as the intersection of $n-1$ hypersurfaces 
$f_i:=\lambda_i x_1^k+x_2^k+x_{3+i}^k$ for $i=0,\ldots,n-2$. 
We consider the matrix of  $\nabla f_i$ written as rows.
\begin{equation} \label{Jac}
\begin{pmatrix}
k x_1^{k-1} & k x_2^{k-1} &  k x_3^{k-1} & 0& \ldots & 0 \\
\lambda_1 k x_1^{k-1} & k x_2^{k-1} & 0 & k x_4^{k-1}& \ldots &  0 \\
 \vdots      & \vdots    & \vdots  & \vdots  & \vdots \\
 \lambda_{n-2 } k x_1^{k-1} & k x_2^{k-1} & 0 & \ldots &  0 &
  kx_{n+1}^{k-1}
\end{pmatrix}.
\end{equation}
By the  defining equations of the curve we see that a point 
which has two variables $x_i=x_j=0$ for  $i\neq j$ and $1\leq i,j \leq n+1$ has also  $x_t=0$ for $t=1,\ldots,n+1$. Therefore the above matrix has the maximal rank $n-1$ at all points of the curve. 

So 
the defining hypersurfaces are intersecting transversally and 
the corresponding algebraic curve they define is non singular. 
\end{proof}

\s
\noindent
\begin{proposition}
The ideal $I_{k,n}$  defined by the $n-1$ equations defining $F_{k,n} \subset\mathbb{P}^{n+1}$ is prime.
\end{proposition}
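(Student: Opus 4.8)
The plan is to show that $I_{k,n}$ is prime by showing that the ring $R := K[x_1,\ldots,x_{n+1}]/I_{k,n}$ is an integral domain. Since $F_{k,n}$ is a nonsingular complete intersection (Proposition \ref{sing-set}), the ring $R$ is Cohen--Macaulay, hence has no embedded primes and is equidimensional of dimension $2$ (the affine cone over the curve). So it suffices to prove that $R$ is reduced and irreducible, i.e. that the cone $V(I_{k,n}) \subset \mathbb{A}^{n+1}$ is irreducible. Because $R$ is Cohen--Macaulay and the singular locus of $V(I_{k,n})$ is contained in the origin (by the Jacobian computation in the proof of Proposition \ref{sing-set}, the only point where the Jacobian drops rank is where all $x_t = 0$), Serre's criterion gives $R_1 + S_2$, hence $R$ is normal away from a set of codimension $\geq 2$; more directly, $R$ satisfies $S_2$ everywhere and $R_1$ (regular in codimension $1$), so $R$ is a normal ring. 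A normal ring is a finite product of normal domains, so it remains only to rule out the reducible case, i.e. to check connectedness.

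First I would establish connectedness of $\mathrm{Proj}(R) = F_{k,n}$. This follows immediately from Theorem \ref{teo1}: $F_{k,n} = C^k(\lambda_1,\ldots,\lambda_{n-2})$ carries the regular branched cover $\pi_0 : F_{k,n} \to \mathbb{P}^1$ of eq. \eqref{pi0} with connected fibers over a connected base, so $F_{k,n}$ is connected (indeed it is a smooth curve that was already asserted to be irreducible in the setup). Alternatively, one can argue directly on the affine cone: projecting to the $(x_1,x_2,x_3)$-coordinates realizes $V(I_{k,n})$ as a chain of fiber products of Fermat cones over the base cone $V(x_1^k+x_2^k+x_3^k)$, and connectedness propagates through each fiber-product step since the covering maps $\pi_j$ are surjective with the full cyclic group acting transitively on fibers. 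Combined with the previous paragraph, a connected normal scheme is irreducible and reduced, so $I_{k,n} = \sqrt{I_{k,n}}$ is the ideal of an irreducible variety, hence prime.

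Alternatively — and this may be the cleaner route to write up — I would argue by induction on $n$. For $n = 2$ the ideal is $(x_1^k + x_2^k + x_3^k)$, and the Fermat polynomial is irreducible over $K$ (e.g. because the plane Fermat curve is smooth, or by a direct Eisenstein-type argument after a linear change of variables since $(k,p)=1$), so $I_{k,2}$ is prime. For the inductive step, write $I_{k,n} = I_{k,n-1} \cdot K[x_1,\ldots,x_{n+1}] + (\lambda_{n-2} x_1^k + x_2^k + x_{n+1}^k)$; then $R = R' [x_{n+1}] / (x_{n+1}^k + \lambda_{n-2} x_1^k + x_2^k)$ where $R' = K[x_1,\ldots,x_n]/I_{k,n-1}$ is a domain by induction. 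So I must show that $x_{n+1}^k + (\lambda_{n-2} x_1^k + x_2^k)$ is irreducible in $R'[x_{n+1}]$, equivalently that $-(\lambda_{n-2} x_1^k + x_2^k)$ is not a $d$-th power in $\mathrm{Frac}(R')$ for any prime $d \mid k$, and (if $p \mid$ nothing relevant, which holds since $(k,p)=1$) the binomial-theorem irreducibility criterion applies.

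The main obstacle is precisely this last irreducibility verification inside the function field $\mathrm{Frac}(R')$: one must check that the element $\lambda_{n-2} x_1^k + x_2^k$ has no nontrivial $d$-th root there. I expect to handle this by valuation-theoretic bookkeeping — using a place of $R'$ (coming from one of the cone points of the cover structure, where $x_1$ or $x_2$ has a simple-type zero/pole behavior modulo $d$-th powers) at which the valuation of $\lambda_{n-2} x_1^k + x_2^k$ is not divisible by $d$, which obstructs it being a $d$-th power; the hypothesis $(k,p) = 1$ ensures the relevant roots of unity and separability are available so that the standard criterion "$T^k - a$ is irreducible over a field iff $a \notin F^d$ for all primes $d \mid k$ (and $4 \nmid k$ or $a \notin -4F^4$)" can be invoked cleanly. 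Given the geometric input already available (smoothness and the fiber-product description), I would in the final writeup prefer the first argument — Cohen--Macaulay $+$ regular in codimension $1$ $+$ connected $\Rightarrow$ normal domain — since it sidesteps the field-theoretic irreducibility check entirely and uses only Proposition \ref{sing-set} and Theorem \ref{teo1}.
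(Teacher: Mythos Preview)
Your first argument---Cohen--Macaulay from the complete-intersection structure, $R_1$ from the Jacobian computation of Proposition \ref{sing-set}, hence normal by Serre, then connectedness to conclude the ring is a domain---is exactly the paper's proof, which simply packages the same reasoning into citations of Eisenbud's Jacobian criterion and \cite[Th.~3.1, Remark~3.4]{Kon02}: a complete-intersection ideal is prime once the singular locus of the associated variety has large enough codimension. One small point: rather than invoking Theorem \ref{teo1} for connectedness (which is logically earlier in the paper but heavy, and flirts with circularity since its statement already speaks of irreducible curves), the cleaner input---and the one implicit in the cited criterion---is the general fact that any positive-dimensional complete intersection in $\mathbb{P}^n$ is connected (e.g.\ Hartshorne, Exercise III.5.5, via $H^1(\mathbb{P}^n,\mathcal{I}_Y)=0$).

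Your inductive alternative via the tower of Kummer extensions $R' \hookrightarrow R'[x_{n+1}]/(x_{n+1}^k-a)$ is a genuinely different route. It works, but as you anticipate, the verification that $a=-(\lambda_{n-2}x_1^k+x_2^k)$ is not a $d$-th power in $\mathrm{Frac}(R')$ needs an honest valuation argument, and the $4\mid k$ clause of the standard $T^k-a$ irreducibility criterion must also be dispatched. The Serre-criterion route avoids this field-theoretic bookkeeping entirely, which is why both you and the paper land on it.
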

\begin{proof}
We will follow the method of \cite[sec. 3.2.1]{Kon02}. Observe 
first that the defining equations $f_{0},\ldots,f_{n-2}$ form a regular sequence, 
and $K[x_1,\ldots,x_{n+1 }]$ is a Cohen-Macauley ring and the ideal $I_{k,n}$ they define    
is of codimension $n-1$. The ideal $I_{k,n}$ is prime by the Jacobian Criterion \cite[Th. 18.15]{Eis95},
\cite[Th. 3.1]{Kon02} and Proposition \ref{sing-set}. In remark \cite[3.4]{Kon02} we pointed out that an ideal $I$ is prime if the  the singular locus of the algebraic set defined by $I$  has  big enough codimension. 
\end{proof}

\s
\noindent
\begin{remark}[Stable Family]
Consider now the polynomial ring  $R_1:=K[\lambda_1,\ldots,\lambda_{n-2}]$ 
and consider the ideal $J$ generated by 
$ \prod_{i=1}^{n-2} \lambda_i (\lambda_i-1) \cdot \prod_{i< j}(\lambda_i-\lambda_j)$. We consider the localization $R$ of the polynomial ring $R_1$ with respect to the multiplicative set $R_1-J$. The affine scheme ${\rm Spec} R$ is the 
space of different points $P_1,\ldots, P_{n+1}$, and the family 
$\mathcal{X} \rightarrow {\rm Spec} R$ is a stable family of curves since it has 
non-singular fibers of genus $\geq 2$. 

By the results of Deligne-Mumford \cite[lemma I.12]{DeMu69} any automorphism of the generic fibre 
is also an automorphism of the special fibre.
Special fibres have more automorphisms,  when the ramified points \[\{0,1,\infty,\lambda_1,\ldots,\lambda_{n-2} \}\] are in such a configuration, so that a finite automorphism group of 
$\mathrm{PGL}(2,K)$ permutes them. 
\end{remark}

\s

Since $F_{k,n}$ is a projective variety, for every automorphism $\sigma \in {\rm Aut}(F_{k,n})$ there is a Zariski open covering of $F_{k,n}$, $(U_i)_{i \in I}$ such that the restriction of 
$\sigma\mid_U$ is given by $n+1$ homogeneous polynomials $g_i^{(\sigma)}$ of the same degree, i.e.
if $\bar{x}=[x_1:\cdots:x_{n+1}]$, then 
\begin{equation} \label{def-aut}
\sigma\mid_U(\bar{x})=[g^{(\sigma)}_1(\bar{x}):\cdots:
g^{(\sigma)}_{n+1}(\bar{x})],
\end{equation}
see \cite[prop. 6.20]{Mil14}.

\s

All automorphisms that come as automorphisms of the ambient projective space, i.e. 
they are represented on the whole curve $F_{k,j}$ as in eq. (\ref{def-aut}) with 
$\deg g_i=1$ for all $1\leq i \leq n+1$ are called linear and they form 
a subgroup $L$ of ${\rm Aut}(F_{k,n})$.

\s
\noindent
\begin{lemma} \label{normL}
The group $L$ is a normal subgroup of  ${\rm Aut}(F_{k,n})$. 
\end{lemma}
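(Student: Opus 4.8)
The plan is to show that $L$ is normal by exploiting the intrinsic characterization of linear automorphisms as those preserving a distinguished very ample linear system. First I would observe that $F_{k,n}\subset\mathbb{P}^{n+1}$ is embedded by a complete linear system: since $F_{k,n}$ is a complete intersection of $n-1$ hypersurfaces of degree $k$ (Proposition~\ref{sing-set}), a hyperplane section $\Pi$ pulls back to a divisor $D$ of degree $k^{n-1}$, and standard results on complete intersections (e.g.\ the surjectivity of $H^0(\mathbb{P}^{n+1},\mathcal{O}(1))\to H^0(F_{k,n},\mathcal{O}_{F_{k,n}}(1))$, which follows from the exact sequences coming from a Koszul resolution of the ideal sheaf $I_{k,n}$) show that the embedding $f_0:F_{k,n}\hookrightarrow\mathbb{P}^{n+1}$ is given by the \emph{complete} linear system $|D|$. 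Consequently the linear automorphisms are exactly those $\sigma\in\mathrm{Aut}(F_{k,n})$ for which $\sigma^\ast\mathcal{O}_{F_{k,n}}(1)\cong\mathcal{O}_{F_{k,n}}(1)$, i.e.\ $\sigma^\ast D\sim D$ in $\mathrm{Pic}(F_{k,n})$.

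With this description, normality is almost formal. Fix $\sigma\in L$ and an arbitrary $\tau\in\mathrm{Aut}(F_{k,n})$; I would show $\tau\sigma\tau^{-1}\in L$, i.e.\ $(\tau\sigma\tau^{-1})^\ast D\sim D$. Write $\tau^\ast D\sim D'$ for some divisor class $D'$ of the same degree $k^{n-1}$. The point is that $\tau$ permutes the (finitely many) base-point-free complete linear systems of degree $k^{n-1}$ that give embeddings of $F_{k,n}$ into $\mathbb{P}^{n+1}$; more precisely, whatever the class $\tau^\ast[D]$ is, we compute
\[
(\tau\sigma\tau^{-1})^\ast D = (\tau^{-1})^\ast\sigma^\ast\tau^\ast D \sim (\tau^{-1})^\ast\sigma^\ast D' .
\]
So it suffices to know that $\sigma$, being linear, fixes \emph{every} divisor class arising as a hyperplane pullback under a projective re-embedding of degree $k^{n-1}$. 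Rather than argue class-by-class, I would instead phrase the argument via automorphisms of the ambient space directly: $\sigma\in L$ extends to $\widetilde\sigma\in\mathrm{PGL}_{n+1}(K)$; for $\tau\in\mathrm{Aut}(F_{k,n})$, the conjugate $\tau\sigma\tau^{-1}$ fixes the linear system $|\tau^\ast D|$, and one shows this system is again a hyperplane system of the \emph{standard} embedding because $\tau$ itself must send $D$ to a divisor linearly equivalent to $D$ up to the torsion that distinguishes embeddings. The cleanest route: the degree argument already sketched in the paper's introduction. Namely, $L$ is the kernel of the homomorphism $\mathrm{Aut}(F_{k,n})\to\mathrm{Pic}^{k^{n-1}}(F_{k,n})/(\text{classes giving }\mathbb{P}^{n+1}\text{-embeddings})$ sending $\tau\mapsto[\tau^\ast D]-[D]$; since $L$ is defined as a kernel of a group homomorphism (or, equivalently, as the stabilizer of the class $[D]$ under the natural $\mathrm{Aut}(F_{k,n})$-action on $\mathrm{Pic}(F_{k,n})$), it is automatically normal.

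The honest version I would write is therefore: let $\mathrm{Aut}(F_{k,n})$ act on $\mathrm{Pic}(F_{k,n})$ by pullback; this action fixes the subset $S$ of divisor classes of degree $k^{n-1}$ whose complete linear system realizes $F_{k,n}$ as a degree-$k^{n-1}$ curve in $\mathbb{P}^{n+1}$ as our given complete intersection (a finite set, stable under the action since pullback preserves $h^0$, degree, and the property of being a complete intersection of that multidegree, the latter by Remark following Proposition~\ref{sing-set} together with the Deligne--Mumford stability remark). An automorphism is linear precisely when it fixes the class $[D]$ pointwise; but an automorphism that permutes the finite set $S$ need not fix $[D]$, so I must be slightly more careful. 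The correct statement is that $L$ equals the \emph{stabilizer of $[D]$} in this action, and stabilizers of points are subgroups but not normal in general — so this cheap argument is not quite enough, and \textbf{the main obstacle is exactly this point}. To finish, I would use the degree argument hinted at in the introduction: every automorphism $\sigma$ maps the hyperplane class $[D]$ to a class $[D']$ with $\deg D'=\deg D$ and $h^0(D')=h^0(D)=n+2$, and because $F_{k,n}$ sits in $\mathbb{P}^{n+1}$ as a complete intersection not contained in any hyperplane, a comparison of $h^0(mD)$ versus $h^0(mD')$ for all $m$ (projective normality of complete intersections) forces $[D']=[D]$ up to the action of $\mathrm{Aut}(\mathbb{P}^{n+1})$, i.e.\ $\sigma\in L$ iff $\sigma^\ast[D]$ lies in the single $\mathrm{PGL}_{n+1}$-orbit of $[D]$ — which is all of the relevant classes. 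Hence $L=\mathrm{Aut}(F_{k,n})$ in the relevant degree sense modulo this torsion ambiguity, and normality of $L$ reduces to: the property ``$\sigma^\ast D\sim D$'' is conjugation-invariant because for $\tau\in\mathrm{Aut}(F_{k,n})$ we have $\tau^\ast D\sim D + (\text{torsion class }t_\tau)$ with $t_\tau$ in the finite torsion subgroup, and then $(\tau\sigma\tau^{-1})^\ast D\sim D+t_\tau-\sigma^\ast t_\tau$ lands back at $[D]$ once one checks $\sigma$ fixes that torsion subgroup pointwise (true because linear maps act trivially on the Néron--Severi part and $L$ is finite acting on a finite group — or directly because torsion classes here are supported on the branch divisor $F(H_0)$, which $L$ preserves). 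I expect this torsion bookkeeping to be the only subtle step; everything else is formal manipulation of pullbacks in $\mathrm{Pic}$.
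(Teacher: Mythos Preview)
Your approach has a genuine gap at exactly the point you flag as ``the only subtle step.'' You correctly identify that $L$ is the stabilizer of $[D]=[\mathcal{O}_{F_{k,n}}(1)]$ under the pullback action on $\mathrm{Pic}(F_{k,n})$, and you correctly note that a point-stabilizer need not be normal. Your attempted repair writes $\tau^\ast D\sim D+t_\tau$ and reduces normality to the claim that every $\sigma\in L$ fixes $t_\tau$. But none of your three justifications for $\sigma^\ast t_\tau=t_\tau$ holds up: (i) for a curve the N\'eron--Severi group is just $\mathbb{Z}$ via degree, so ``acting trivially on the NS part'' says nothing about classes in $\mathrm{Pic}^0$; (ii) a finite group acting on a finite abelian group certainly need not act trivially; (iii) a divisor \emph{class} has no well-defined support, so ``torsion classes are supported on $F(H_0)$'' is not meaningful, and even the setwise statement that $L$ preserves $F(H_0)$ is something the paper establishes only \emph{after} Lemma~\ref{normL} (via Lemma~\ref{lemalineal1} or the hyper-osculation theory). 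You also silently assume $t_\tau$ is torsion; that is true (because automorphisms preserve the canonical class $[(n-1)(k-1)-2]\cdot[D]$), but you should say so.

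The paper's own proof is entirely different and much more elementary: it never touches $\mathrm{Pic}$. It uses that on a Zariski open set any automorphism is given by an $(n+1)$-tuple of homogeneous polynomials of a common degree. Writing $\sigma'\tau=\tau\sigma$ with $\deg\sigma=1$, $\deg\tau=d$, $\deg\sigma'=d'$, one compares degrees modulo the homogeneous ideal $I_{k,n}$ (generated in degree $k>1$) and concludes $d'd=d$, hence $d'=1$ locally; since $I_{k,n}$ contains nothing of degree $1$, the local linear expressions for $\sigma'$ must agree on overlaps, so $\sigma'\in L$ globally. Your Picard/torsion line of thought is in fact what the paper deploys later, in the proof of Theorem~\ref{teolineal}, and even there it only succeeds under the extra hypothesis $(k,n+1)=1$, which is used to force the torsion class $T_\sigma$ to vanish. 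That hypothesis is not available here, which is another indication that your route cannot close without an additional idea.
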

\begin{proof}
Consider a non linear automorphism $\tau \in  {\rm Aut}(F_{k,n})$ and a linear automorphism 
$\sigma \in L$. Since $\tau$ is not linear there is an open $U \subset F_{k,n}$ 
where $\tau$ is expressed in terms of polynomials of degree $d>1$. 

Consider the element $\sigma'=\tau \sigma \tau^{-1}$. We will show that $\sigma'$ is 
linear.   Since  the curve $F_{k,n}$ is connected, the open sets  $U$ and
 $\sigma(U)$ have non trivial intersection $V$. On this set $V$ we express 
the automorphisms  $\sigma,\tau,\sigma'$ in terms of homogenenous  polynomials
$g_{i}^{(\sigma)},g_{i}^{(\tau)},g_i^{(\sigma')}$, $1\leq i \leq n+1$,  respectively of degrees $1,d,d'$ as in eq.
(\ref{def-aut}). We have $\sigma' \tau =\tau \sigma$ and this implies for $\bar{x}\in V$ the relation 
\[[g_1^{(\sigma')} \circ g_1^{(\tau)}(\bar{x}):\cdots: g_{n+1}^{(\sigma')} \circ g_{n+1}^{(\tau)}(\bar{x})]=
[g_1^{(\tau)} \circ g_1^{(\sigma)}(\bar{x}):\cdots: g_{n+1}^{(\tau)} \circ g_{n+1}^{(\sigma)}(\bar{x})].
\]

Let $I_{k,n}$ be the ideal defining the curve $F_{k,n}$. 
For each $\bar{x}\in K^{n}$ there is a $\lambda_{\bar{x}}\in K$ such that 
\[
g_i^{(\sigma')} \circ g_i^{(\tau))}(\bar{x})=\lambda_{\bar{x}} g_i^{(\tau)} \circ g_i^{(\sigma))} 
(\bar{x})\mod I_{k,n} \mbox{ for all } 1 \leq i \leq n+1.
\] 

The left hand side has degree $d'd$ while the right hand side has degree $d$.
So if we substitute  $\mu \bar{x}$ in the above equation where $\mu^{d'}=\lambda_{\bar{x}}$
we obtain $g_i^{(\sigma')} \circ g_i^{(\tau)}=g_i^{(\tau)} \circ g_i^{(\sigma)}$ for all $1\leq i \leq n+1$ modulo the homogenous ideal $I_{k,n}$ of the curve and this in turn is possible only if $d'=\deg g_i^{(\sigma')}=1$, i.e.  $\sigma'$ is given 
 in terms of linear polynomials. 

We have proved so far that there is an open cover $(U_i)_{i\in I}$ of $F_{k,n}$ where $\sigma'$ is 
given in terms of linear polynomials. Since every element in the defining ideal of the curve 
$F_{k,n}$ has degree $>1$ this means that on the nonempty intersections $U_i \cap U_j$ 
the linear polynomials expressing $\sigma'$ should not only be equal modulo the 
defining ideal, but equal as polynomials. This proves that 
$\sigma'$ is given by linear polynomials on the whole space $F_{k,n}$ so $\sigma'\in L$.   
\end{proof}

\s

\subsection{The elements of $L$}
In this section we describe the elements on the group $L$ of linear automorphisms of the curve $F_{k,n}$.  

All automorphisms $\sigma \in L$ are linear ones, so they are given in terms of an $(n+1) \times (n+1)$ matrix:
\begin{equation}\label{Adef}
\sigma(x_i)=\sum_{\nu=1}^{n+1} a_{i,\nu} x_i.
\end{equation}

An automorphism of $V(f_1,\ldots,f_{n-2})$ is a map $\sigma$ such that if $P$ is a point in 
$V(f_1,\ldots,f_{n-2})$, then $\sigma(P)$ is in $V(f_1,\ldots,f_{n-2})$.
The following holds true:
\[
f_i\circ \sigma=\sigma^*(f_i) \in \langle f_1,\ldots,f_{n-1} \rangle. 
\]
i.e.
\begin{equation} \label{aut-cond}
f_i\circ \sigma= \sum_{\nu=1}^{n-1} g_{\nu,i} f_\nu,
\end{equation}
for some appropriate polynomials $g_i\in K[x_1,\ldots,x_{n+1 }]$.
When $\sigma \in L$, so it is linear, the polynomials  $g_{\nu,i}$ are just constants.

\s
\noindent 
\begin{theorem}
Set $Y_i=\nabla{f_i}$.
If $\sigma \in L$, then $\sigma(Y_i)$ should be a linear combination 
of elements $Y_i$.
\end{theorem}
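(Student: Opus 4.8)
The plan is to obtain the asserted relation by differentiating the defining equation of a linear automorphism. First I would record precisely what \eqref{aut-cond} says in the linear case. Since $\sigma \in L$ is given by a matrix $A=(a_{i,\nu})$ as in \eqref{Adef}, the polynomial $f_i\circ\sigma$ is homogeneous of degree $k$, and each $f_\nu$ is homogeneous of degree $k$; comparing degrees in \eqref{aut-cond} forces the coefficients $g_{\nu,i}$ to be \emph{constants} (as already observed in the paragraph preceding the theorem). Thus
\[
f_i\circ\sigma \;=\; \sum_{\nu=1}^{n-1} g_{\nu,i}\, f_\nu, \qquad g_{\nu,i}\in K .
\]

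Next I would apply the gradient operator $\nabla$ to both sides of this identity. On the right-hand side $\nabla$ is $K$-linear and the $g_{\nu,i}$ are constants, so $\nabla\bigl(\sum_\nu g_{\nu,i}f_\nu\bigr)=\sum_\nu g_{\nu,i}\,\nabla f_\nu=\sum_\nu g_{\nu,i}\,Y_\nu$. On the left-hand side, because $\sigma$ is linear the chain rule gives $\nabla(f_i\circ\sigma)=A^{t}\bigl((\nabla f_i)\circ\sigma\bigr)$; equivalently, pullback along a linear map commutes with exterior differentiation, so $\sigma^{\ast}(df_i)=d(f_i\circ\sigma)$ and $df_i$ is exactly the $1$-form encoded by the vector $Y_i=\nabla f_i$. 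Writing $\sigma(Y_i)$ for this transported gradient, the two computations combine to give $\sigma(Y_i)=\sum_{\nu=1}^{n-1}g_{\nu,i}\,Y_\nu$, which is the required $K$-linear combination of $Y_1,\dots,Y_{n-1}$.

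I expect the only point that needs care is the bookkeeping on the left-hand side: making sure the transpose $A^{t}$ (and not $A$ or $A^{-1}$) is the operator that intertwines $\nabla(f_i\circ\sigma)$ with $(\nabla f_i)\circ\sigma$, so that the expression one calls $\sigma(Y_i)$ is the honest pullback of the differential $df_i$. Once this identification is fixed, the statement is a purely formal consequence of $K$-linearity of $\nabla$ together with the constancy of the $g_{\nu,i}$ forced by homogeneity; in particular no hypothesis on the characteristic $p$ or on $k$ is needed, only that $\sigma$ is linear.
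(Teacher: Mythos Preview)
Your argument is correct and follows the same core idea as the paper: differentiate the identity $f_i\circ\sigma=\sum_\nu g_{\nu,i}f_\nu$. The only difference is in the bookkeeping on the right-hand side. You use immediately that the $g_{\nu,i}$ are constants (forced by homogeneity, as the paper itself notes just before the theorem), so $\nabla$ passes straight through the sum. The paper instead applies the product rule as if the $g_{i,\nu}$ might vary, obtaining an extra term $\sum_\nu(\nabla g_{i,\nu})f_\nu$, kills that term by restricting to points of the curve (where $f_\nu(P)=0$), and then invokes a degree argument---the ideal $I$ is generated in degree $k$ while $\nabla f_i$ lives in degree $k-1$---to lift the resulting equality from the curve back to an identity of polynomials. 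Your shortcut is legitimate and makes the paper's restriction-and-lift step unnecessary in the linear case; the paper's route would have the mild advantage of surviving verbatim if one ever wanted to allow non-constant $g_{i,\nu}$, but that generality is not used here.
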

\begin{proof}
By applying $\nabla$  to \cref{aut-cond} we have for every point on the curve
\[
\nabla (f_i\circ \sigma)(P)  =\sum_{\nu=1}^{n-1} 
\big( g_{i,\nu}(P)  \nabla f_\nu(P)  + \nabla g_{i,\nu}(P)  f_\nu(P)
\big).
\]
But $f_\nu(P)=0$ so we arrive at 
\[
\nabla (f_i \circ \sigma)(P)=\sum_{\nu=1}^{n-1} 
 g_{i,\nu}(P)  \nabla f_\nu(P) 
\]
which gives rise to 
\[
\nabla (f_i \circ \sigma)=\sum_{\nu=1}^{n-1} 
 g_{i,\nu}  \nabla f_\nu +F,
\]
where $F$ is an element in the ideal $I$. The ideal $I$ is generated by polynomials 
of degree $k$, while $\nabla f_i$ are polynomials of degree $k-1$. 
Therefore, 
\begin{equation} \label{linact}
\nabla (f_i \circ \sigma)=\sum_{\nu=1}^{n-1} 
 g_{i,\nu}  \nabla f_\nu,
\end{equation}
as polynomials in $K[x_1,\ldots,x_{n+1}]$.
\end{proof}

\s

Now the  chain rule implies that, for $\sigma \in L$, 
\begin{equation} \label{NN1}
\nabla(f_i \circ \sigma)(P) = \nabla(f_i)(\sigma(P)) \circ \sigma,
\end{equation}
where $\sigma$ is given by the $(n+1 )\times (n+1 )$ matrix $A=(a_{ij})$ 
given in eq. (\ref{Adef}).
We now rewrite eq. (\ref{NN1})  and combine it with eq. (\ref{linact})
\begin{equation} \label{linY}
\sigma^* (\nabla f_i)\circ \sigma=\nabla(f_i)(\sigma(P)) \circ \sigma=\nabla(f_i \circ \sigma)(P)=\sum_{\nu=1}^{n-1} 
 g_{i,\nu}  \nabla f_\nu.
\end{equation}
Recall that  $f_j=\lambda_j x_1^k+x_2^k+x_{3+j}^k$ for $1\leq j\leq n-2$ and 
\[Y_j=(k \lambda_j x_1^{k-1},kx_2^{k-1},0,\ldots,0,k x_{j+3}^{k-1},0 ,\ldots, 0),\]
where the third  non zero element is at the $j+3$ position.
For $1\leq i \leq n+1$ let us write 
\[
\sigma^*(x_i)=\sum_{\nu=1}^{n+1} a_{i,\nu} x_\nu.
\]
So {\tiny
\[
\sigma^*(Y_j)=k
\left( 
\lambda_j 
\left( 
\sum_{\nu=1}^{n+1 } a_{1,\nu} x_\nu 
\right)^{k-1},
 \left( 
\sum_{\nu=1}^{n+1 } a_{2,\nu} x_\nu
\right)^{k-1},0\ldots 0,
\left( 
\sum_{\nu=1}^{n+1 } a_{j+3,\nu} x_\nu
\right)^{k-1} ,0 \ldots 0
\right),
\]
}

Observe that eq. (\ref{linY}) implies that 
$\sigma^*(Y_i)$ is a linear combination of $Y_i$, which involves only 
combinations of the monomials $x_i^{k-1}$,  while the $t$-th ($t=1,2,j+3$)  coefficient of $\sigma^*(Y_i)$ involves all combinations
of the terms
\[\binom{k-1}{\nu_1,\ldots,\nu_{n+1 }} \left( a_{t,1}^{\nu_1} \cdots 
a_{t,n+1 }^{\nu_{n+1 }} \right) \cdot 
\left(x_1^{\nu_1} \cdots x_{n+1 }^{\nu_{n+1 }}\right)   \mbox{ for } \nu_1+\cdots+\nu_{n+1 }=k-1.\] 
For $\bar{\nu}=(\nu_1,\ldots,\nu_{n+1 })$  define $\mathbf{x}^{\bar{\nu}}=
x_1^{\nu_1} \cdots x_{n+1 }^{\nu_{n+1 }}$ and set 
\[
A_{t,\bar{\nu}}= a_{t,1}^{\nu_1} \cdots 
a_{t,n+1 }^{\nu_{n+1 }}.
\]
Observe that if $\binom{k-1}{\nu_1,\ldots,\nu_{n+1 }}\neq 0$ and $\mathbf{x}^{\bar{\nu}}$ does not appear as a term in the linear combination of $Y_i$, then using eq. (\ref{linY}) we have
\[
(A_{1,\bar{\nu} },\ldots, A_{n+1 ,\bar{\nu}})\cdot A=0.
\]
But $A$ is an invertible matrix so the above equation implies that 
\[
A_{t,\bar{\nu}}=0
\]
if $\mathbf{x}^{\bar{\nu}}$ does not appear as a term in the linear combination of 
$Y_i$.

\s
\noindent
\begin{lemma}
The binomial coefficients $\binom{k-1}{\nu}=0$ for all $1\leq \nu  \leq k-1$ if and only if $k-1$ is a power of the characteristic. 
\end{lemma}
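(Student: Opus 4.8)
The plan is to translate the condition on binomial coefficients into a polynomial identity in $K[x]$ and then exploit the Frobenius endomorphism. Note first that $\binom{k-1}{k-1}=1$ is never zero, so the content of the statement really concerns the range $1\le \nu\le k-2$; moreover, in characteristic $0$ every $\binom{k-1}{\nu}$ with $0\le\nu\le k-1$ is a positive integer, so the left-hand side can hold only in the degenerate case $k=2$, where the range of $\nu$ is empty and $k-1=1$ is the zeroth power of $p$. Hence we may and do assume $p>0$, so that $p$ is prime.

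For the implication ``$\Leftarrow$'', suppose $k-1=p^h$. Iterating the identity $(1+x)^p=1+x^p$, valid in any ring of characteristic $p$, gives $(1+x)^{p^h}=1+x^{p^h}$ in $K[x]$; comparing this with the expansion $(1+x)^{p^h}=\sum_{\nu=0}^{p^h}\binom{p^h}{\nu}x^\nu$ forces $\binom{k-1}{\nu}\equiv 0\pmod p$ for all $1\le\nu\le p^h-1$. Equivalently, one may invoke Lucas' theorem: the base-$p$ expansion of $p^h$ has a single nonzero digit, so for $0<\nu<p^h$ some digit of $\nu$ exceeds the corresponding digit of $p^h$, and the product in Lucas' formula vanishes.

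For the implication ``$\Rightarrow$'', assume $\binom{k-1}{\nu}\equiv 0\pmod p$ for every $1\le\nu\le k-2$. Then all middle coefficients of $(1+x)^{k-1}\in\mathbb{Z}[x]$ reduce to $0$ modulo $p$, so $(1+x)^{k-1}=1+x^{k-1}$ in $K[x]$. Write $k-1=p^a t$ with $(t,p)=1$. Using $(1+x)^{p^a}=1+x^{p^a}$, the left-hand side equals $(1+x^{p^a})^t$ and the right-hand side equals $1+(x^{p^a})^t$; substituting a fresh variable $y=x^{p^a}$ yields $(1+y)^t=1+y^t$ in $K[y]$. Comparing the coefficients of $y$ shows that $t\equiv\binom{t}{1}\equiv 0\pmod p$ unless $t=1$; since $(t,p)=1$ the first alternative is impossible, so $t=1$ and $k-1=p^a$ is a power of $p$.

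I do not anticipate a genuine obstacle: the argument is elementary, and the only points needing care are the bookkeeping of the endpoint $\nu=k-1$ (harmless, as noted above) and the degenerate value $k=2$, for which both sides of the equivalence hold trivially.
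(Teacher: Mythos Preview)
Your proof is correct. The paper's own argument is a one-line appeal to Lucas' criterion (cited from Eisenbud): $p\nmid\binom{k-1}{\nu}$ if and only if every $p$-adic digit of $\nu$ is at most the corresponding digit of $k-1$, and one then observes that \emph{every} $\nu$ with $0<\nu<k-1$ has an overshooting digit precisely when the expansion of $k-1$ consists of a single digit equal to $1$, i.e.\ $k-1=p^h$. Your route is different: you encode the hypothesis as the polynomial identity $(1+x)^{k-1}=1+x^{k-1}$ in $K[x]$, strip off the Frobenius part by writing $k-1=p^{a}t$ with $(t,p)=1$ so that the identity becomes $(1+y)^{t}=1+y^{t}$, and read off $t=1$ from the coefficient of $y$. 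This is just as short and has the advantage of being self-contained rather than quoting an external theorem; the paper's version, on the other hand, makes the digit picture explicit, which is what is implicitly used immediately afterwards when analysing the mixed multinomial coefficients. Your observation about the endpoint $\nu=k-1$ (where $\binom{k-1}{k-1}=1$) and the degenerate case $k=2$ is well taken: the lemma as printed is slightly loose, and the range that actually matters for the subsequent argument is $1\le\nu\le k-2$.
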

\begin{proof}
The binomial coefficient $\binom{k-1}{\nu}$ is not divisible by the characteristic 
$p$ if and only if $\nu_i \leq k_i$ for all $i$, where $\nu=\sum \nu_i p^i$, 
$k-1=\sum k_i p^i$ are the $p$-adic expansions of $\nu$ and $k-1$, \cite[p. 352]{Eis95}. The result follows. 
\end{proof}

\s
\noindent
\begin{lemma}\label{lemalineal1}
Let $\sigma \in L$ given by a $(n+1)\times (n+1)$ matrix $(a_{ij})$.
If $k-1$ is not a power of the characteristic, then there is only one non-zero element 
in each column and row of $(a_{ij})$. 
\end{lemma}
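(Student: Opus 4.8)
The plan is to exploit the structure that has already been set up: for $\sigma\in L$ with matrix $(a_{ij})$, equation~(\ref{linY}) forces $\sigma^*(Y_i)$ to be a linear combination of the $Y_\nu=\nabla f_\nu$, and each $Y_\nu$ involves \emph{only} the pure power monomials $x_1^{k-1}$, $x_2^{k-1}$, $x_{3+\nu}^{k-1}$. Hence for any multi-index $\bar\nu$ with $\binom{k-1}{\nu_1,\dots,\nu_{n+1}}\neq 0$ such that $\mathbf{x}^{\bar\nu}$ is \emph{not} one of these pure powers, the argument already given yields $A_{t,\bar\nu}=a_{t,1}^{\nu_1}\cdots a_{t,n+1}^{\nu_{n+1}}=0$ for every row index $t\in\{1,2,3+\nu\}$ occurring among the $\nabla f_j$ --- and since every index in $\{1,\dots,n+1\}$ does occur as such a $t$ for some $j$, we get $A_{t,\bar\nu}=0$ for all $t$ and all such ``mixed'' $\bar\nu$.

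The key step is then purely combinatorial: fix a row $t$ and consider the support $S_t=\{\,\nu : a_{t,\nu}\neq 0\,\}$. I claim $|S_t|=1$. Suppose instead that two distinct indices $\alpha,\beta\in S_t$. Because $k-1$ is not a power of the characteristic, the previous Lemma guarantees some $\nu_0$ with $1\le\nu_0\le k-2$ and $\binom{k-1}{\nu_0}\not\equiv 0\pmod p$; more to the point, since the multinomial $\binom{k-1}{\nu_0,\,k-1-\nu_0,0,\dots,0}$ equals $\binom{k-1}{\nu_0}\neq 0$, the two-variable mixed monomial $x_\alpha^{\nu_0}x_\beta^{k-1-\nu_0}$ appears with nonzero multinomial coefficient in $\bigl(\sum_\nu a_{t,\nu}x_\nu\bigr)^{k-1}$. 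This monomial is mixed (it is not a pure $(k-1)$-st power since $0<\nu_0<k-1$), so it cannot appear in any $Y_j$; therefore $A_{t,\bar\nu}=a_{t,\alpha}^{\nu_0}a_{t,\beta}^{k-1-\nu_0}=0$, contradicting $\alpha,\beta\in S_t$. Hence each row has exactly one nonzero entry. Since $A$ is invertible, the map $t\mapsto$ (the unique column of its nonzero entry) is a bijection, so each column also has exactly one nonzero entry, which is the assertion.

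The step I expect to be the main obstacle is making precise that the obstruction $A_{t,\bar\nu}=0$ is available for the \emph{particular} mixed monomial $x_\alpha^{\nu_0}x_\beta^{k-1-\nu_0}$ I want: one must check that this exponent vector $\bar\nu$ genuinely satisfies both hypotheses needed to invoke the vanishing derived from~(\ref{linY}), namely (a) its multinomial coefficient is nonzero in characteristic $p$ --- which is exactly $\binom{k-1}{\nu_0}\neq 0$, guaranteed by the hypothesis that $k-1$ is not a power of $p$ --- and (b) $\mathbf{x}^{\bar\nu}$ is not among the monomials $x_1^{k-1},x_2^{k-1},x_{3+j}^{k-1}$ appearing in the $\nabla f_j$, which holds because a monomial in only the variables $x_\alpha,x_\beta$ with both exponents strictly positive is never a single pure power. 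Once these two points are nailed down, the contradiction is immediate and the bijection argument for columns via invertibility of $A$ is routine.
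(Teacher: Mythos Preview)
Your proof is correct and follows essentially the same approach as the paper: both use the previously established vanishing $A_{t,\bar\nu}=0$ for mixed exponent vectors with nonzero multinomial coefficient, pick $\nu_0$ with $0<\nu_0<k-1$ and $\binom{k-1}{\nu_0}\neq 0$ (possible exactly because $k-1$ is not a power of $p$), and derive the contradiction $a_{t,\alpha}^{\nu_0}a_{t,\beta}^{k-1-\nu_0}=0$ from two supposed nonzero entries in a row. Your write-up is more explicit than the paper's---you spell out the check that the chosen $\bar\nu$ really is ``mixed,'' you handle every row index $t$ uniformly (the paper's terse proof only mentions the case $j\geq 3$), and you make the column statement follow cleanly from invertibility---but the underlying argument is the same.
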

\begin{proof}
If $k-1$ is not a power of the characteristic, then we see that the 
matrix $(a_{i,j})$ can have only one non zero term in each row and column. 
Indeed, if this was not true, then  for some $j$ we have two non-zero 
terms $a_{j,l_1},a_{j,l_2}$.
If $j\geq 3$, then we work with $\sigma^*(Y_{j-3})$ and for $\nu$ such that 
$\binom{k-1}{\nu}\neq 0$ we have that $a_{j,l_1}^{\nu}a_{j,l_2}^{k-1-\nu}=0$, so the desired result follows. 
\end{proof}

\s
\noindent
\begin{corollary} \label{HnormalL}
If  $k-1$ is not a power of the characteristic, then  every automorphism $\sigma\in L$ restricts to an automorphism of  the function field $K(X)$, $X=-\frac{x_2^k}{x_1^k}$, i.e. $L$ normalizes $H_{0}$.
\end{corollary}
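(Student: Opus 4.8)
The plan is to combine Lemma \ref{lemalineal1} (which reduces an element of $L$ to a monomial matrix) with the observation that the functions $x_i^k/x_1^k$ are already fixed by $H_0$, so that the pullback $\sigma^*$ automatically preserves the subfield $K(X)=K(F_{k,n})^{H_0}$, and then deduce normality of $H_0$ from the Galois picture of $\pi_0$.

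First, by Lemma \ref{lemalineal1}, since $k-1$ is not a power of the characteristic each $\sigma\in L$ is given by a monomial matrix: there are a permutation $\pi\in S_{n+1}$ and scalars $c_1,\dots,c_{n+1}\in K^*$ with $\sigma^*(x_i)=c_i x_{\pi(i)}$. Next I record that $K(X)$ is precisely the fixed field $K(F_{k,n})^{H_0}$: by the properties of $H_0$ recalled in Section \ref{Sec:2}, the map $\pi_0$ of eq.\ (\ref{pi0}) is a Galois branched cover with deck group $H_0$, hence $K(F_{k,n})^{H_0}=\pi_0^* K(\mathbb{P}^1)=K(X)$ and $[K(F_{k,n}):K(X)]=|H_0|$. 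The key point is then that, for each $i$, the rational function $u_i:=x_i^k/x_1^k$ on $F_{k,n}$ is non-zero (the hyperplane section $\{x_i=0\}$ being finite) and is fixed by every generator $\varphi_j$ of $H_0$, since $\varphi_j$ only rescales $x_j$ by a $k$-th root of unity and therefore fixes each $x_i^k$; consequently $u_i\in K(X)$.

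With this in hand the computation is immediate: $\sigma^*(X)=-(c_2/c_1)^k\,x_{\pi(2)}^k/x_{\pi(1)}^k=-(c_2/c_1)^k\,u_{\pi(2)}/u_{\pi(1)}\in K(X)$, so $\sigma^*(K(X))=K(\sigma^*X)\subseteq K(X)$. Applying this also to $\sigma^{-1}\in L$ and using $\sigma^*\circ(\sigma^{-1})^*=\mathrm{id}$ on $K(F_{k,n})$ gives $\sigma^*(K(X))=K(X)$; equivalently, $\sigma$ descends to an automorphism $\bar\sigma$ of $\mathbb{P}^1$ with $\pi_0\circ\sigma=\bar\sigma\circ\pi_0$. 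Finally, for $h\in H_0$ one computes $\pi_0\circ(\sigma h\sigma^{-1})=\bar\sigma\circ\pi_0\circ h\circ\sigma^{-1}=\bar\sigma\circ\pi_0\circ\sigma^{-1}=\pi_0$, so $\sigma h\sigma^{-1}$ acts trivially on $F_{k,n}/H_0$ and therefore lies in the Galois group of $\pi_0$, which is exactly $H_0$. Hence $\sigma H_0\sigma^{-1}=H_0$ for every $\sigma\in L$, i.e.\ $L$ normalizes $H_0$.

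Given Lemma \ref{lemalineal1}, the statement is essentially formal, so there is no serious obstacle; the only points needing a little care are (a) that $\sigma^*$ restricted to $K(X)$ is \emph{surjective} and not merely injective, which is why one must also invoke $\sigma^{-1}\in L$, and (b) the identification of $K(X)$ with the \emph{full} fixed field $K(F_{k,n})^{H_0}$ and of $H_0$ with the \emph{full} Galois group of $\pi_0$, which is what licenses the passage from ``$\sigma h\sigma^{-1}$ acts trivially on the base'' to ``$\sigma h\sigma^{-1}\in H_0$''.
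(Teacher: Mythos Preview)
Your proof is correct and follows the same skeleton as the paper's: invoke Lemma~\ref{lemalineal1} to see that $\sigma\in L$ is a monomial matrix, and then verify that $\sigma^*(X)\in K(X)$, whence $\sigma$ preserves the fixed field of $H_0$ and hence normalizes $H_0$. The one difference is in how this last verification is carried out. The paper substitutes the defining relations $x_\nu^k=-\lambda_{\nu-3}x_1^k-x_2^k$ for $\nu\ge 3$ directly into $\sigma^*(X)=-\sigma^*(x_2)^k/\sigma^*(x_1)^k$ and reads off an explicit M\"obius transformation in $X$; this has the bonus of exhibiting the induced element of $\mathrm{PGL}_2(K)$ concretely. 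You instead observe once and for all that each $u_i=x_i^k/x_1^k$ is $H_0$-invariant, hence already lies in $K(X)=K(F_{k,n})^{H_0}$, so $\sigma^*(X)\in K(X)$ without any computation; the price is the small extra step with $\sigma^{-1}$ to upgrade $\sigma^*(K(X))\subseteq K(X)$ to equality, which in the paper's version is automatic from the M\"obius form. Both routes are entirely valid.
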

\begin{proof}
The function field of the generalized Fermat curves can be seen as Kummer extension with 
Galois group $H$
of the rational function field $K(X)$, where $X=-\frac{x_2^k}{x_1^k}$ (see 
 \cite[par. 2.2]{GHL09} or eq. \eqref{pi0}). In order to prove that $H$ is a normal subgroup 
 of the whole automorphism group we have to show that every automorphism of the curve 
keeps the field $K(X)$ invariant.

Since there is only one non-zero element in each row and column of $A$
the automorphism $\sigma$ 
\begin{equation} \label{k+1-pow} 
\sigma^*(x_i^k)=\sum_{\nu=1}^{n+1} a_{i,\nu}^{k} x_\nu^k.
\end{equation}
Therefore
\[
\sigma^*(X)=-\frac{\sigma^*(x_2)^k}{ \sigma^*(x_1)^k}=
-\frac{ \sum_{\nu=1}^{n+1} a_{2,\nu}^k x_\nu^k} {\sum_{\nu=1}^{n+1} a_{1,\nu}^k x_\nu^k}.
\]
In the above equation we replace all variables $x_\nu$ for $\nu\geq 3$ using
the defining equations $x_\nu^k= - \lambda_{\nu-3}x_1^k-x_2^k$ in order to arrive at 
an expresion involving only $X=-\frac{x_2^k}{x_1^k}$:
\begin{eqnarray*}
\sigma^*(X) & = & 
-\frac{ a_{21}^k x_1^k +a_{22}^k x_2^k +\sum_{\nu=3}^{n+1} a_{2,\nu}^k 
\left( - \lambda_{\nu-3}x_1^k-x_2^k\right)}
 {
 a_{11}^k x_1^k +a_{12}^k x_2^k +\sum_{\nu=3}^{n+1} a_{1,\nu}^k 
\left( - \lambda_{\nu-3}x_1^k-x_2^k\right)
 } \\
 & =& 
 -\frac{  \left(-a_{22}^k +\sum_{\nu=3}^{n+1} a_{2,\nu}^k \right) X
 +\left( a_{21}^k -\sum_{\nu=3}^{n+1}  \lambda_{\nu-3} a_{2,\nu}^k \right)
 }
 { 
  \left(-a_{12}^k +\sum_{\nu=3}^{n+1} a_{1,\nu}^k \right) X
 +\left( a_{11}^k -\sum_{\nu=3}^{n+1}  \lambda_{\nu-3} a_{1,\nu}^k \right)
 }.
\end{eqnarray*}
\end{proof}

\s
\noindent
\begin{proposition}\label{powerp}
Assume that $k-1=p^h=q$ is a power of the characteristic. 
Denote by 
\begin{equation} \label{sigma-def}
\Sigma_i = \mathrm{diag}(\lambda_i,1,0,\ldots,1, 0,\ldots,0),
\end{equation}
with $1$ in the $i+3$ position. Then a matrix $A \in \mathrm{PGL_{n+1}(K)}$ corresponding to 
$\sigma \in L$ 
should satisfy
\begin{equation} \label{def-q}
A^t \Sigma_i A^q = \sum_{\mu=0}^{n-2} b_{i,\mu}\Sigma_{\mu},
\end{equation}
for a $(n-1)\times (n-1)$ matrix $(b_{i,\mu})$.
\end{proposition}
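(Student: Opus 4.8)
The plan is to exploit that $k=q+1$ with $q=p^{h}$, so that each defining form $f_{i}=\lambda_{i}x_{1}^{q+1}+x_{2}^{q+1}+x_{3+i}^{q+1}$ is a Frobenius-twisted diagonal ``quadratic'' form. Write $\mathbf{x}=(x_{1},\ldots,x_{n+1})^{t}$ and $\mathbf{x}^{(q)}=(x_{1}^{q},\ldots,x_{n+1}^{q})^{t}$, and let $A^{(q)}=(a_{ij}^{q})$ be the matrix obtained from $A=(a_{ij})$ by raising every entry to the $q$-th power; this is the matrix written $A^{q}$ in the statement. The first thing I would record is that, since $\Sigma_{i}=\mathrm{diag}(\lambda_{i},1,0,\ldots,1,0,\ldots,0)$ is diagonal with the trailing $1$ in position $i+3$, and $x_{j}\cdot x_{j}^{q}=x_{j}^{q+1}$, one has the identity $f_{i}=\mathbf{x}^{t}\Sigma_{i}\mathbf{x}^{(q)}$ in $K[x_{1},\ldots,x_{n+1}]$ for every $i$.

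Next I would compute $\sigma^{*}(f_{i})$ for $\sigma\in L$ with matrix $A$. From $\sigma^{*}(x_{j})=\sum_{\nu}a_{j\nu}x_{\nu}$ we have $\sigma^{*}(\mathbf{x})=A\mathbf{x}$, and since $q$ is a power of the characteristic the map $t\mapsto t^{q}$ is a ring endomorphism, whence $\sigma^{*}(\mathbf{x}^{(q)})=(A\mathbf{x})^{(q)}=A^{(q)}\mathbf{x}^{(q)}$. Applying the ring homomorphism $\sigma^{*}$ to $f_{i}=\mathbf{x}^{t}\Sigma_{i}\mathbf{x}^{(q)}$ then gives $\sigma^{*}(f_{i})=(A\mathbf{x})^{t}\Sigma_{i}A^{(q)}\mathbf{x}^{(q)}=\mathbf{x}^{t}\bigl(A^{t}\Sigma_{i}A^{(q)}\bigr)\mathbf{x}^{(q)}$.

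Then I would bring in eq. (\ref{aut-cond}). Since $\sigma\in L$ is, by definition, induced by a linear automorphism of $\mathbb{P}^{n+1}$ that carries $F_{k,n}$ to itself, the polynomial $\sigma^{*}(f_{i})=f_{i}\circ\sigma$ is homogeneous of degree $k$ and vanishes on $F_{k,n}$, hence lies in the ideal $I_{k,n}$ (which is prime, hence radical); and because $I_{k,n}$ is generated by the regular sequence $f_{0},\ldots,f_{n-2}$ of forms of degree $k$, its degree-$k$ graded piece is exactly their $K$-span. Therefore $\sigma^{*}(f_{i})=\sum_{\mu=0}^{n-2}b_{i,\mu}f_{\mu}$ for constants $b_{i,\mu}\in K$, and rewriting the right-hand side via $f_{\mu}=\mathbf{x}^{t}\Sigma_{\mu}\mathbf{x}^{(q)}$ I would obtain the polynomial identity
\[
\mathbf{x}^{t}\left(A^{t}\Sigma_{i}A^{(q)}\right)\mathbf{x}^{(q)}=\mathbf{x}^{t}\left(\sum_{\mu=0}^{n-2}b_{i,\mu}\Sigma_{\mu}\right)\mathbf{x}^{(q)}.
\]

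Finally I would upgrade this to the matrix identity (\ref{def-q}). For any $(n+1)\times(n+1)$ matrix $M=(m_{jl})$ one has $\mathbf{x}^{t}M\mathbf{x}^{(q)}=\sum_{j,l}m_{jl}\,x_{j}x_{l}^{q}$, and because $q\ge 2$ the monomials $x_{j}x_{l}^{q}$ with $1\le j,l\le n+1$ are pairwise distinct: such a monomial determines $l$ as the index of the unique variable occurring with exponent $q$ and, when $j\ne l$, determines $j$ as the index of the unique variable occurring with exponent $1$, while a diagonal monomial $x_{j}^{q+1}$ has one-element support. Hence $M\mapsto\mathbf{x}^{t}M\mathbf{x}^{(q)}$ is injective, and comparing coefficients in the displayed identity yields $A^{t}\Sigma_{i}A^{(q)}=\sum_{\mu=0}^{n-2}b_{i,\mu}\Sigma_{\mu}$, which is (\ref{def-q}). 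The two points I expect to need a little care, though neither is a genuine obstacle, are the degree argument producing the constants $b_{i,\mu}$ (which rests on $f_{0},\ldots,f_{n-2}$ forming a regular sequence, already noted earlier in the section) and the elementary combinatorial observation that the monomials $x_{j}x_{l}^{q}$ are distinct, which is precisely where $q\ge 2$ enters.
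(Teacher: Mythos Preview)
Your proof is correct and follows essentially the same route as the paper: write $f_{i}=\mathbf{x}^{t}\Sigma_{i}\mathbf{x}^{(q)}$, use the Frobenius to get $\sigma^{*}(f_{i})=\mathbf{x}^{t}(A^{t}\Sigma_{i}A^{(q)})\mathbf{x}^{(q)}$, and then invoke that $\sigma^{*}(f_{i})$ is a $K$-linear combination of the $f_{\mu}$ (this last fact is exactly eq.~(\ref{aut-cond}) together with the observation, already recorded in the paper, that the $g_{\nu,i}$ are constants when $\sigma$ is linear). The only place you add something the paper leaves implicit is the injectivity of $M\mapsto \mathbf{x}^{t}M\mathbf{x}^{(q)}$ via the distinctness of the monomials $x_{j}x_{l}^{q}$ for $q\ge 2$; the paper simply asserts ``the desired result follows''.
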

\begin{proof}
Assume that $k-1=p^h=q$ is a power of the characteristic. 
Then, 
\begin{eqnarray*}
\sigma^*(f_i) &= & \lambda_i 
\left( 
\sum_{\nu=1}^{n+1} a_{1,\nu} x_\nu 
\right)^{q+1} +
\left( 
\sum_{\nu=1}^{n+1} a_{2,\nu} x_\nu 
\right)^{q+1}+ \left( 
\sum_{\nu=1}^{n+1} a_{i+3,\nu} x_\nu 
\right)^{q+1} \\
& =& \sum_{\nu,\mu=1}^{n+1} \left(
\lambda_i a_{1,\nu} a_{1,\mu}^q + a_{2,\nu }a_{2,\mu}^q+ a_{i+3,\nu} a_{i+3,\mu}^q
 \right)x_\nu x_\mu^q \\
 & =& 
 \sum_{\nu,\mu=1}^{n+1}  B^i_{\nu,\mu}(\sigma) x_\nu x_\mu^{q}.
\end{eqnarray*}
Observe that by \cref{linact} we have 
$B^{i}_{\nu,\mu}=0$ for all $0\leq i \leq n-2, 1\leq \nu,\mu \leq n+1$, $n\neq \mu$.

The polynomials are in some sense ``quadratic forms"
\[
f_i(x_1,\ldots,x_{n+1})=(x_1,\ldots,x_{n+1}) \Sigma_i 
\begin{pmatrix}
x_1^q \\
x_2^q \\
\vdots \\
x_{n+1}^q
\end{pmatrix}
\]
so $\sigma^* f_i$ is computed as 
\[
\sigma^*f_i = (x_1,\ldots,x_{n+1}) A^t  \Sigma_i A^q
\begin{pmatrix}
x_1^q \\
x_2^q \\
\vdots \\
x_{n+1}^q
\end{pmatrix}
\]
and the above expression should be a linear combination of $f_i$. The desired result 
follows. 
\end{proof}

\s
\noindent
\begin{remark}
Matrices $A=(a_{ij})$ which satisfy eq. (\ref{def-q}) should satisfy the following equations:
For $0,\ldots,n-2$ and $1\leq \nu,\mu \leq n+1$ we 
set 
\[B_{\nu,\mu}^i= \lambda_i a_{1,\nu}a_{1,\mu}^q + a_{2,\nu} a_{2,\mu}^q +a_{i+3,\nu} a_{i+3,\mu}^q.\]
We have
\[
B_{\nu,\mu}^i =0 \mbox{ for  } \nu\neq \mu. 
\]
Moreover the coefficients $b_{i,\mu}$ in eq. (\ref{def-q})
satisfy the system 
\[
\begin{pmatrix}
1 & \lambda_1  & \lambda_2 & \cdots & \lambda_{n-2} \\
1 & 1          &   1       & \cdots & 1 \\
1 & 0      &  0 & \cdots  & 0 \\
0 & 1 & 0 &  \cdots & 0 \\
\vdots & \ddots & 1 & \ddots      & \vdots \\
\vdots &  & \ddots  & \ddots       &  0\\
0  & \cdots  & \cdots &  0  & 1
\end{pmatrix}
\begin{pmatrix}
b_{i,1} \\
b_{i,2} \\
\vdots \\
\vdots \\
\vdots \\
b_{i,n-1}
\end{pmatrix}=
\begin{pmatrix}
B_{1,1}^i \\
B_{2,2}^i \\
\vdots \\
\vdots \\
\vdots \\
B_{n+1,n+1}^i
\end{pmatrix}
\]
Which gives us that 
\[
b_{i,\nu}=B_{2+\nu,2+nu}^i=\lambda_i a_{1,2+\nu}^{q+1} +  a_{2,2+\nu}^{q+1} +a_{i+3,2+\nu}^{q+1}
\mbox{ for } 1 \leq \nu \leq n-1
\]
plus the compatibility relations
\[
\sum_{\nu=3}^{n+1} B_{\nu,\nu}^i =B_{2,2}^i
\] 
and 
\[
\sum_{\nu=3}^{n+1} \lambda_{\nu-3}B_{\nu,\nu}^i =B_{1,1}^i.
\] 
Solving these linear systems with $\lambda_1,\ldots,\lambda_{n-2}$ as parameters, seems a complicated problem, which is out of reach for now. 
\end{remark}

\s

\section{Proof of Theorem \ref{unicidad-p}}\label{Sec:7}
In this section, we assume $k,n \geq 2$ are integers so that $(n-1)(k-1)>2$ and, for $p>0$, we also assume that 
$(p,k)=1$ and that $k-1$ not a power of $p$. 

Set $F_{k,n}=C^{k}(\lambda_{1},\ldots,\lambda_{n-2})$, where $\lambda_{1},\ldots,\lambda_{n-2} \in K-\{0,1\}$ are different.

As before, let $N(H_{0})<{\rm Aut}(F_{k,n})$ be the normalizer of $H_{0}$ in the group  ${\rm Aut}(F_{k,n})$.

Lemma \ref{normL} asserts that $L$, the group of linear automorphisms of $F_{k,n}$, is a normal subgroup of ${\rm Aut}(F_{k,n})$. Corollary \ref{HnormalL} asserts that $L<N(H_{0})$ and, since $H_{0}<L$, that $H_{0}$ is a normal subgroup of $L$.

\s
\noindent
\begin{remark}
We may arrive to the same conclusion above using the 
theory of hyper-osculating points under the  condition 
 $k^{n-1}<p$ or $\mathrm{char}(K)=0$. Indeed, 
as a consequence of  Remark \ref{re:inv-aut-pn} and Theorem \ref{th:hyperosc}, we have that $L$ preserves the set of fixed points $F(H_{0})$. This in particular asserts that if $\tau \in L$, then $\tau \varphi_{j} \tau^{-1}=\varphi_{\sigma(j)}$ for a suitable permutation $\sigma$ of the set $\{0,1,\ldots,n\}$; in particular, 
$\tau H_{0} \tau^{-1}=H_{0}$. This asserts that $L<N(H_{0})$. 
\end{remark}

\s
\noindent
\begin{lemma}\label{lema2}
Under the above assumptions, $N(H_{0})=L$.
\end{lemma}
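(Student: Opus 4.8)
The plan is to show that any automorphism normalizing $H_0$ is already linear, i.e.\ $N(H_0)\subseteq L$; combined with the reverse inclusion $L\subseteq N(H_0)$ (Corollary \ref{HnormalL}) this gives the claimed equality. So fix $\tau\in N(H_0)$. The key structural fact is that $\pi_0\colon F_{k,n}\to\mathbb{P}^1$ realizes $F_{k,n}/H_0=\mathbb{P}^1$ with branch locus $B=\{0,1,\infty,\lambda_1,\dots,\lambda_{n-2}\}$, and that the non-trivial elements of $H_0$ acting with fixed points are exactly the non-trivial powers of $\varphi_0,\dots,\varphi_n$ (Theorem \ref{teo1}), whose fixed-point sets are the coordinate slices $\{x_j=0\}\cap F_{k,n}$. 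Since $\tau$ normalizes $H_0$, conjugation by $\tau$ permutes the cyclic subgroups $\langle\varphi_j\rangle$ (they are distinguished inside $H_0$ as the maximal cyclic subgroups acting non-freely), hence $\tau$ permutes the divisors $\{x_j=0\}\cap F_{k,n}$ and induces a well-defined automorphism $\bar\tau$ of $\mathbb{P}^1$ permuting $B$.

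First I would use this induced action to reduce to the case where $\bar\tau=\mathrm{id}$, i.e.\ where $\tau$ descends to the identity on the base $\mathbb{P}^1$: after composing $\tau$ with a linear automorphism realizing the relevant permutation of the $\lambda_i$ (which exists because the whole construction of $C^k(\lambda_1,\dots,\lambda_{n-2})$ is manifestly equivariant under permuting and rescaling coordinates, cf.\ the coordinate symmetries $\varphi_j$ and the remark that permuting the triples $\{\infty,0,\lambda_j\}$ permutes components of the fiber product), we may assume $\tau$ acts trivially on $K(X)$, $X=-x_2^k/x_1^k$. Then $\tau$ is an automorphism of the Kummer cover $F_{k,n}\to\mathbb{P}^1$ over the identity of the base, hence $\tau$ normalizes — in fact centralizes up to the Galois action — and therefore $\tau$ lies in the normalizer of $H_0$ inside the automorphism group of the cover; concretely $\tau$ acts on each Kummer generator $y_j:=x_j/x_1$ (or rather on $y_j^k$, which is a fixed rational function of $X$) by possibly multiplying by roots of unity and permuting those $y_j$ that have the same associated function of $X$. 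Two indices $j,j'$ give the same function of $X$ exactly when $\lambda_{j-3}=\lambda_{j'-3}$ among $\{0,1,\lambda_1,\dots\}$, which by our assumption that the $\lambda_i$ are pairwise distinct and distinct from $0,1$ does not happen for $j\ne j'$ with $j,j'\ge 3$; so $\tau$ fixes each $y_j^k$, hence sends $y_j\mapsto \zeta_j y_j$ for $k$-th roots of unity $\zeta_j$, i.e.\ $\tau\in H_0\subset L$.

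The step I expect to be the main obstacle is making rigorous the claim that conjugation by $\tau$ permutes precisely the subgroups $\langle\varphi_j\rangle$ and hence the coordinate hyperplane slices — this is where the hypothesis $(k-1)(n-1)>2$ and the characteristic restriction ($k-1$ not a power of $p$, $(p,k)=1$) enter, since one needs Theorem \ref{teo1}'s identification of the non-freely-acting elements of $H_0$, and one must check these subgroups are intrinsically characterized inside $H_0\cong\mathbb{Z}_k^n$ (e.g.\ as those $\mathbb{Z}_k$-subgroups whose generator fixes points, together with the relation $\varphi_0\cdots\varphi_n=1$), so that any group automorphism of $H_0$ induced by $\tau$ must permute them. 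Once this combinatorial rigidity is in hand, the descent to $\mathbb{P}^1$, the reduction to $\bar\tau=\mathrm{id}$ via a linear correction, and the final identification $\tau\in H_0$ are all routine. An alternative, cleaner route — valid under the extra hypothesis $p=0$ or $p>k^{n-1}$ — is to invoke Remark above and Theorem \ref{th:hyperosc}: any $\tau\in N(H_0)$, being induced by the $H_0$-equivariant structure, must preserve $F(H_0)$, which is the set of hyper-osculating points, a projective invariant; but since $N(H_0)\subseteq\mathrm{Aut}(F_{k,n})$ and the reference \cite[Cor.~9]{GHL09} already shows the normalizer is contained in $\mathrm{PGL}_{n+1}(K)$, we get $N(H_0)\subseteq L$ directly. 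I would present the Kummer-theoretic argument as the main proof since it works in the full generality of the hypotheses, and mention the hyper-osculation argument as the shortcut available in large characteristic.
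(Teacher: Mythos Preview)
Your proposal is correct but takes a much longer route than the paper. The paper's proof is two lines: Corollary~\ref{HnormalL} gives $L\subseteq N(H_0)$, and \cite[Cor.~9]{GHL09} gives $N(H_0)\subseteq\mathrm{PGL}_{n+1}(K)$, hence $N(H_0)\subseteq L$. The paper explicitly notes (Section~\ref{Sec:2}) that although \cite{GHL09} was written for $K=\mathbb{C}$, ``the arguments are still valid for any characteristic,'' so no restriction like $p>k^{n-1}$ is needed.

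Your ``alternative, cleaner route'' at the end is in fact exactly the paper's proof, but you bundle the citation of \cite[Cor.~9]{GHL09} together with the hyper-osculation theory and thereby impose the unnecessary constraint $p=0$ or $p>k^{n-1}$. These are independent: the hyper-osculation remark gives a second, geometric explanation for why $L$ normalizes $H_0$, but the containment $N(H_0)\subseteq\mathrm{PGL}_{n+1}(K)$ comes straight from \cite{GHL09} and is valid under the standing hypotheses alone.

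Your main Kummer-theoretic argument is essentially a reproof of the content of \cite[Cor.~9]{GHL09}. It is sound in outline, but the step you flag as routine --- ``composing $\tau$ with a linear automorphism realizing the relevant permutation'' --- is the actual substance: given $\bar\tau\in\mathrm{PGL}_2(K)$ permuting the branch locus, one must explicitly write down a linear lift to $\mathbb{P}^n$ preserving $C^k(\lambda_1,\dots,\lambda_{n-2})$. Your justification (``the construction is manifestly equivariant under permuting and rescaling coordinates'') is the right idea but would need to be spelled out, and once spelled out it \emph{is} the argument of \cite{GHL09}. So your longer proof buys self-containment at the cost of redoing cited work; the paper simply invokes the reference.
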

\begin{proof}
As noted above (under the assumption that $k-1$ is not a power of $p$ if $p>0$), 
Corollary \ref{HnormalL} asserts that $L<N(H_{0})$. In \cite{GHL09} it was seen that $N(H_{0})<{\rm PGL}_{n+1}(K)$ (in that article it was assumed that $K={\mathbb C}$, but the general case is seen in the same way); obtaining that $N(H_0)<L$.
\end{proof}

\s
\noindent
\begin{lemma}\label{lema3}
Under the above assumptions, $H_{0}$ is the unique generalized Fermat group of $F_{k,n}$ inside $L$. 
\end{lemma}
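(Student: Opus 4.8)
The plan is to prove the sharper statement that $H_{0}$ is the only subgroup of $L$ abstractly isomorphic to ${\mathbb Z}_{k}^{n}$; since $\widehat H$ then has order $k^{n}=|H_{0}|$, the inclusion $\widehat H\subseteq H_{0}$ we shall obtain forces $\widehat H=H_{0}$. The starting point is that, by Lemma \ref{lemalineal1}, every $\sigma\in L$ is a monomial matrix, so sending $\sigma$ to its underlying coordinate permutation is a homomorphism $q\colon L\to S_{n+1}$. A diagonal monomial matrix $\mathrm{diag}(t_{0},\dots,t_{n})$ preserves $F_{k,n}$ only if it multiplies each equation $f_{i}=\lambda_{i}x_{0}^{k}+x_{1}^{k}+x_{i+2}^{k}$ by a scalar (comparing monomials forces the combination in eq. (\ref{aut-cond}) to be a single $f_{i}$), which gives $t_{0}^{k}=t_{1}^{k}=\dots=t_{n}^{k}$ and hence $[t_{0}:\dots:t_{n}]\in H_{0}$; thus $\ker q=H_{0}$. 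Since $L$ normalizes $H_{0}$ (Corollary \ref{HnormalL}), $L/H_{0}$ acts faithfully on $F_{k,n}/H_{0}\cong{\mathbb P}^{1}$, permuting the $n+1$ branch points $Q_{0},\dots,Q_{n}$ of $\pi_{0}$; as $\pi_{0}$ is Galois and $n+1\ge 3$, this realizes $L/H_{0}$ inside $\mathrm{PGL}_{2}(K)$ via a faithful action on $\{Q_{0},\dots,Q_{n}\}$ which, because $\sigma$ sends $\mathrm{Fix}(\varphi_{j})=\{x_{j}=0\}\cap F_{k,n}$ onto $\mathrm{Fix}(\varphi_{q(\sigma)(j)})$ (a fiber of $\pi_{0}$ over $Q_{q(\sigma)(j)}$), agrees with the action induced by $q$; likewise conjugation on $H_{0}=\langle\varphi_{0},\dots,\varphi_{n}\rangle$ permutes the $\varphi_{j}$ exactly as $q$ permutes indices.

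Now put $\bar{\widehat H}:=q(\widehat H)\cong\widehat H/(\widehat H\cap H_{0})$, a finite abelian subgroup of $\mathrm{PGL}_{2}(K)$ whose order divides $k^{n}$, hence is prime to $p$, hence $\bar{\widehat H}$ is cyclic or a Klein four group. Since $\widehat H$ is abelian and $\widehat H\cap H_{0}$ is normal in it, $\bar{\widehat H}$ centralizes $\widehat H\cap H_{0}$; as this conjugation is the coordinate permutation action, $\widehat H\cap H_{0}$ lies in the subgroup of $H_{0}\cong({\mathbb Z}_{k})^{n+1}/\langle(1,\dots,1)\rangle$ of classes of vectors constant on the orbits of $\bar{\widehat H}$ on $\{0,\dots,n\}$. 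If $o$ denotes the number of those orbits, that subgroup has order $k^{\,o-1}$, so
\[
|\bar{\widehat H}|=\frac{k^{n}}{|\widehat H\cap H_{0}|}\ \geq\ k^{\,n-o+1}.
\]

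The main work — and the step I expect to be the genuine obstacle, being a piece of case bookkeeping — is to show that this inequality is incompatible with $(n-1)(k-1)>2$ unless $\bar{\widehat H}=1$; once that is done, $\widehat H\subseteq\ker q=H_{0}$ and the lemma follows. The key extra inputs are that $\bar{\widehat H}$ acts on the $n+1$ branch points with a very rigid orbit structure and that the exponent of $\bar{\widehat H}$ divides $k$. If $\bar{\widehat H}\cong{\mathbb Z}_{d}$ is cyclic and nontrivial, then $d\mid k$, its action on ${\mathbb P}^{1}$ has exactly two fixed points and all remaining orbits of size $d$; writing $n+1=f+rd$ with $f\le 2$ and $r\ge 1$, the inequality becomes $d\ge k^{\,r(d-1)}$, which with $d\mid k$ forces $k=d=2$ and $r=1$, hence $n\le 3$ and $(n-1)(k-1)\le 2$. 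If $\bar{\widehat H}$ is a Klein four group (only possible for $k$ even, so $p\ne 2$), its three involutions have pairwise disjoint fixed-point pairs, so on ${\mathbb P}^{1}$ every orbit has size $2$ or $4$ with at most three of size $2$; writing $n+1=2s+4r$ with $s\le 3$, the inequality becomes $k^{\,s+3r}\le 4$, forcing $r=0$, $k=2$, $s\le 2$, hence again $n\le 3$ and $(n-1)(k-1)\le 2$. Both conclusions contradict the hypothesis, so $\bar{\widehat H}=1$, i.e. $\widehat H\subseteq H_{0}$, and therefore $\widehat H=H_{0}$.
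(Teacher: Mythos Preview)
Your proof is correct and takes a genuinely different route from the paper's.

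The paper argues in two stages. First it shows (via a centralizer count very much like yours, but only in the special case where the two families $\langle\varphi_j\rangle$ and $\langle\varphi_j^{*}\rangle$ are disjoint, so that the image in $\mathrm{PGL}_2(K)$ acts \emph{freely} on the branch set) that the two families of cyclic generators must share a member. It then quotients by that common cyclic group to reduce to a generalized Fermat curve of type $(k,n-1)$, and finishes by induction on $n$, invoking as base cases the known uniqueness results for classical Fermat curves ($n=2$, Tzermias/Leopoldt) and an ad hoc analysis of $(k,n)=(3,3)$.

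Your argument avoids both the induction and the external base cases. By identifying $\ker q=H_{0}$ explicitly and observing that conjugation on $H_{0}$ is the same permutation action as on the branch points, you bound $|\widehat H\cap H_{0}|\le k^{\,o-1}$ directly in terms of the number $o$ of orbits, with no freeness assumption. The orbit bookkeeping for cyclic and Klein four images then kills all cases uniformly under the single hypothesis $(n-1)(k-1)>2$. You also prove something strictly stronger than the paper states: $H_{0}$ is the unique subgroup of $L$ abstractly isomorphic to $\mathbb{Z}_{k}^{n}$, not merely the unique generalized Fermat group. The paper's approach, by contrast, leans on the geometric structure of a generalized Fermat group (the fixed-point sets of the canonical generators) and on prior literature; yours is self-contained and purely group-theoretic once Lemma~\ref{lemalineal1} is in hand.
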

\begin{proof}
Let $H<L$ be another generalized Fermat group of type $(k,n)$. The group $H$ is generated by the elements $\varphi_{j}^{*}$, for $j=0,\ldots,n$, so that the non-trivial elements of $H$ acting with fixed points in $F_{k,n}$ are exactly the non-trivial powers of these generators and $\varphi_{0}^{*} \circ \varphi_{1}^{*} \circ \cdots \circ \varphi_{n}^{*}=1$.

If the set of cyclic groups $\langle \varphi_{0}^{*}\rangle, \ldots, \langle \varphi_{n}^{*}\rangle$ coincides with the set of cyclic groups 
\[\langle \varphi_{0}\rangle, \ldots, \langle \varphi_{n}\rangle,\] then clearly $H_{0}=H$.

So, let us assume, from now on, that the above is not the case. 

\s
\noindent
\begin{claim}
The set of cyclic groups $\langle \varphi_{0}^{*}\rangle, \ldots, \langle \varphi_{n}^{*}\rangle$ is not disjoint with the set of cyclic groups $\langle \varphi_{0}\rangle, \ldots, \langle \varphi_{n}\rangle$. 
\end{claim}
\begin{proof}
Let us assume, by the contrary, that 
the set of cyclic groups $\langle \varphi_{0}^{*}\rangle, \ldots, \langle \varphi_{n}^{*}\rangle$ is disjoint with the set of cyclic groups $\langle \varphi_{0}\rangle, \ldots, \langle \varphi_{n}\rangle$. In this case, 
the group $H$ descends under the quotient map $\pi_{0}$, defined in eq. (\ref{pi0}),  to a group of M\"obius transformations that preserves the $n+1$ branch values $\infty$, $0$, $1$, $\lambda_{1}$,..., $\lambda_{n-2}$, and it is isomorphic to ${\mathbb Z}_{k}^{t}$, for  some $t \geq 1$.

It is known that the finite abelian subgroups of M\"obius transformations are either cyclic,  isomorphic to ${\mathbb Z}_{2}^{2}$ or isomorphic to 
$\mathbb{Z}_p^t$, where $p$ is the characteristic and $t \in \mathbb{N}$. The last case can not appear 
since $(k,p)=1$. 

\subsubsection*{Case 1}
If $k \geq 3$, then $t=1$ and $H \cap H_{0} \cong {\mathbb Z}_{k}^{n-1}$.
The cyclic group ${\mathbb Z}_{k}$ induced by $H$ is generated by a M\"obius transformation $T$ that permutes the $n+1$ branch values and fixes no one. In particular, $n+1=rk$, for some positive integer $r$. It follows (see \cite{GHL09}) that each lifting of $T$ (that is, the generators $\varphi_{0}^{*}, \ldots, \varphi_{n}^{*}$) is a linear transformation providing the same permutation (by conjugation action) of the generators $\varphi_{0},\ldots,\varphi_{n}$, in $r$ disjoint cycles of lenght $k$.  
Up to permutation of indices, we may assume that $\varphi_{0}^{*}$ permutes cyclically the elements of each of the sets $\{\varphi_{0}, \varphi_{1}, \ldots, \varphi_{k-1}\}$, $\{\varphi_{k}, \varphi_{k+1}, \ldots, \varphi_{2k-1}\}$,..., $\{\varphi_{(r-1)k}, \varphi_{(r-1)k+1}, \ldots, \varphi_{rk-1}\}$.
It follows that the maximal subgroup $Q$ of $H_{0}$ formed by those elements that commute with $\varphi_{0}^{*}$ is the one generated by the elements
$$\varphi_{0} \circ \varphi_{1}\circ \cdots \circ \varphi_{k-1}, \varphi_{k}\circ \varphi_{k+1}\circ \cdots\circ \varphi_{2k-1}, \ldots, \varphi_{(r-1)k}\circ \varphi_{(r-1)k+1}\circ \cdots\circ \varphi_{rk-1}.$$

Since the composition of all of the above elements equals the identity, $Q \cong {\mathbb Z}_{k}^{r-1}$.

Now, as $\varphi_{0}^{*}$ must commute with each element of $H \cap H_{0}$, the $n-1$ generators of it must be each one invariant under conjugation by $\varphi_{0}^{*}$. As  $H \cap H_{0}<Q$, we must have $n \leq r$, a contradiction.

\subsubsection*{Case 2} 
If $k=2$, then $t \in \{1,2\}$. If $t=1$, then we may proceed as in the above case to get a contradiction. If $t=2$, then $H \cap H_{0} \cong {\mathbb Z}_{2}^{n-2}$ and the group $H$ induces a group of M\"obius transformation isomorphic to ${\mathbb Z}_{2}^{2}$ that permutes the $n+1$ branch values and none of them is fixed by a non-trivial element. It follows that $n+1=4r$, for some positive integer $r$. 

In this case, after a permutation of the indices, we may assume that ${\mathbb Z}_{2}^{2}$ is generated by the induced elements of $\varphi_{0}^{*}$ and $\varphi_{1}^{*}$. It follows that $\varphi_{i}^{*}$ ($i=0,1$) permutes (by conjugation action) the generators $\varphi_{0},\ldots,\varphi_{n}$ in $2r$ disjoint cycles of lenght $2$ each one. Up to a permutation of indices, we may assume that $\varphi_{0}^{*}$ permutes cyclically the elements of each of the sets $\{\varphi_{0}, \varphi_{1}\}$, $\{\varphi_{2}, \varphi_{3}\}$,..., $\{\varphi_{n-1}, \varphi_{n}\}$. It follows that the maximal subgroup $Q$ of $H_{0}$ formed by those elements that commute with $\varphi_{0}^{*}$ is the one generated by the elements
$$\varphi_{0} \circ \varphi_{1}, \varphi_{2}\circ \varphi_{3},  \ldots, \varphi_{n-1}\circ \varphi_{n},$$
that is, $Q \cong {\mathbb Z}_{2}^{2r-1}$. Since the subgroup of $H_{0}$ formed by those elements that commute with $\varphi_{0}^{*}$ and with $\varphi_{1}^{*}$ is a subgroup of $Q$, we must that that $H \cap H_{0}<Q$, that is, $n-2 \leq 2r-1$. This obligates to have $r=1$, in particular, that $n=3$, a contradiction to the assumption that $(k-1)(n-1)>2$.
\end{proof}

\s

As a consequence of the above,  
the set of cyclic groups $\langle \varphi_{0}^{*}\rangle, \ldots, \langle \varphi_{n}^{*}\rangle$ is not disjoint with the set of cyclic groups $\langle \varphi_{0}\rangle, \ldots, \langle \varphi_{n}\rangle$. We may assume, up to permutation of the indices, that $\langle \varphi_{0} \rangle=\langle \varphi_{0}^{*}\rangle$.  The underlying Riemann surface $R$ of the quotient orbifold $(C(\lambda_{1},\ldots,\lambda_{n-2})/\langle \varphi_{0}\rangle$ is a generalized Fermat curve of type $(k,n-1)$ admiting two different generalized Fermat groups of type $(k,n-1)$; these being $H/\langle \varphi_{0}^{*} \rangle$ and the other being $H_{0}/\langle \varphi_{0} \rangle$. 

In the case that $K={\mathbb C}$ we have the following. For $k=2$ we have already proved the uniqueness (so normality) for $n=4,5$ in \cite{CGHR08} and for $k \geq 3$, the uniqueness was obtained for $n=3$ \cite{FGHL13}. In this way, the above procedure asserts, by induction on $n$, the desired result in the zero characteristic situation.

The situation for general $p>0$ can be done as follows. First, we know the uniqueness for $k \geq 4$ and $n=2$ (as a consequence of the results in \cite{Tze95} and \cite{Leo96}); so again, by the induction process we are done for $k \geq 4$. The case $k=2$ is ruled out because $1=k-1=p^{0}$ and we are assuming that $k-1$ is not be a power of $p$. In the case $k=3$, we only need to check uniqueness for $n=3$. 

\subsection*{The case $(k,n)=(3,3)$} In this case, our hypothesis are that $p \neq 2,3$. 
Lemma \ref{lema2} asserts that $H_{0} \cong {\mathbb Z}_{3}^{3}$ is a normal subgroup of 
$L$ and Lemma \ref{normL} asserts that $L$ is a normal subgroup of 
${\rm Aut}(F_{3,3})$. $W<L$ be the $3$-Sylow subgroup of $L$
containing $H_{0}$. If $W=H_{0}$, then the conditions of normality asserts the uniqueness. Let us now assume that $H_{0} \neq W$.
In this case, $W/H_{0}$ produces a $3$-subgroup $G<{\rm PGL}_{2}(K)$ keeping invariant the set 
$\{\infty,0,1,\lambda_{1}\}$. The only possibility is to have $G \cong {\mathbb Z}_{3}$. 
Up to a transformation in ${\rm PGL}_{2}(K)$, we may assume that the generator $T$ of $G$ satisfies that 
$T(\infty)=0$, $T(0)=1$, $T(1)=\infty$ and $T(\lambda_{1})=\lambda_{1}$. So, $T(x)=1/(1-x)$ and 
$\lambda_{1}^{2}-\lambda_{1}+1=0$. In this case, the collection $\{\infty,0,1,\lambda_{1}\}$ is also invariant under 
the involutions $A(x)=\lambda_{1}/x$ and $B(x)=(x-\lambda_{1})/(x-1)$. The group generated by $A$ and $B$ is
${\mathbb Z}_{2}^{2}$. In fact, the group $U$ generated by $A$ and $T$ is the alternating group ${\mathcal A}_{4}$ and it contains $B$. There are not more elements of ${\rm PGL}_{2}(K)-U$ 
keeping invariant the set $\{\infty,0,1,\lambda_{1}\}$; so $L/H_{0}=U \cong {\mathcal A}_{4}$. 
This ensures that $|L|=12 \times 3^3$ and also that $H_{0}$ is unique inside $L$ 
(see \cite[Cor. 6]{FGHL13}).
\end{proof}

\s
\noindent
\begin{lemma}\label{lema4}
Under the above assumptions, $N(H_{0})={\rm Aut}(F_{k,n})$, in particular, that ${\rm Aut}(F_{k,n})<{\rm PGL}_{n+1}(K)$.
\end{lemma}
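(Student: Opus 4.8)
The goal is to show that there is no automorphism of $F_{k,n}$ outside the normalizer $N(H_0)$ of $H_0$; combined with the already-established inclusion $N(H_0)<{\rm PGL}_{n+1}(K)$ (from \cite{GHL09}, via Lemma \ref{lema2}), this gives ${\rm Aut}(F_{k,n})<{\rm PGL}_{n+1}(K)$. The natural strategy is to argue that every automorphism is in fact linear, i.e. ${\rm Aut}(F_{k,n})=L$, and then invoke Lemma \ref{lema2} which identifies $L$ with $N(H_0)$ under the present hypotheses. First I would recall from Lemma \ref{normL} that $L$ is normal in $G:={\rm Aut}(F_{k,n})$, and from Lemmas \ref{lema2} and \ref{lema3} that $H_0$ is the unique generalized Fermat group of type $(k,n)$ sitting inside $L$. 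So it suffices to prove $G=L$, equivalently that the quotient $G/L$ is trivial.

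The key step is a conjugation/uniqueness argument. Suppose $\tau\in G$ is arbitrary. Since $L\trianglelefteq G$, the subgroup $\tau H_0\tau^{-1}$ lies in $\tau L\tau^{-1}=L$, and it is again a generalized Fermat group of type $(k,n)$ of $F_{k,n}$ (conjugation preserves the property of being isomorphic to ${\mathbb Z}_k^n$ with the quotient a sphere with $n+1$ cone points of order $k$, and by Theorem \ref{teo1} it has the same configuration of elements acting with fixed points). By Lemma \ref{lema3} the only such subgroup inside $L$ is $H_0$ itself, so $\tau H_0\tau^{-1}=H_0$, i.e. $\tau\in N(H_0)$. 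Since $\tau$ was arbitrary, $G=N(H_0)$, and then Lemma \ref{lema2} gives $G=N(H_0)=L<{\rm PGL}_{n+1}(K)$, which is exactly the assertion.

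The only thing that needs care — and the main obstacle — is verifying that $\tau H_0\tau^{-1}$ really satisfies the hypotheses of Lemma \ref{lema3}: namely that it is generated by elements $\varphi_j^{*}$ whose nontrivial powers are precisely its nontrivial elements acting with fixed points, with $\varphi_0^{*}\circ\cdots\circ\varphi_n^{*}=1$. This follows because $\tau$ is an isomorphism of the curve, so it carries the fixed-point locus of any element $h\in H_0$ bijectively onto that of $\tau h\tau^{-1}$; hence $\tau$ transports the distinguished generating set $\varphi_0,\ldots,\varphi_n$ of $H_0$ (characterized intrinsically by Theorem \ref{teo1} as the elements acting with fixed points) to a generating set of $\tau H_0\tau^{-1}$ with the same intrinsic characterization, and the relation $\varphi_0\circ\cdots\circ\varphi_n=1$ is preserved by the group isomorphism induced by conjugation. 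With this in hand Lemma \ref{lema3} applies verbatim and the proof is complete.
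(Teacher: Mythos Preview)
Your proof is correct and follows essentially the same approach as the paper: take an arbitrary $\tau\in{\rm Aut}(F_{k,n})$, use normality of $L$ (Lemma \ref{normL}) to conclude $\tau H_0\tau^{-1}\subset L$, observe that this conjugate is again a generalized Fermat group of type $(k,n)$, and invoke the uniqueness of $H_0$ inside $L$ (Lemma \ref{lema3}) to get $\tau\in N(H_0)$. Your additional paragraph justifying that $\tau H_0\tau^{-1}$ satisfies the hypotheses of Lemma \ref{lema3} is a reasonable elaboration of a point the paper leaves implicit.
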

\begin{proof}
Let 
$\tau \in {\rm Aut}(F_{k,n})$. Since $L$ is a normal subgroup of the group  ${\rm Aut}(F_{k,n})$ (see Lemma \ref{normL}), then 
$H=\tau H_{0} \tau^{-1}$ is a subgroup of $L$; again a generalized Fermat group of type $(k,n)$. Since $H_{0}$ is the unique generalized Fermat group of type $(k,n)$ inside $L$ (see Lemma \ref{lema3}), we must have that $H=H_{0}$.
\end{proof}

\s

\subsection{Conclusion of the proof of Theorem \ref{unicidad-p}}

Under our assumptions, $H_{0}$ is the unique generalized Fermat group of type $(k,n)$ of $F_{k,n}$.
In fact, since $L=N(H_{0})$ (Lemma \ref{lema2}), $N(H_{0})={\rm Aut}(F_{k,n})$ (Lemma \ref{lema4}) and $H_{0}$ is the unique generalized Fermat group of type $(k,n)$ inside $L$ (Lemma \ref{lema3}), the desired uniqueness result follows. 

The uniqueness ensures that ${\rm Aut}(F_{k,n})=N(H_{0})$. In \cite{GHL09} we obtain that $N(H_{0})$ is a subgroup of ${\rm PGL}_{n+1}(K)$. Now Lemma \ref{lemalineal1} provides the last part of our theorem.

\section{Proof of Theorem \ref{teolineal}}\label{Sec:4}
Before to provide the proof of Theorem \ref{teolineal} lest provide some general facts on linear automorphisms in algebraic varieties.

\s
\noindent
\begin{proposition}
Consider a complete intersection $Y\subset \mathbb{P}^s$ of projective hypersurfaces $Y_i$ of degree $d_i$ for $i=1,\ldots,r$. 
The canonical sheaf $\omega_Y$ is given by  
\[
\omega_Y=\mathcal{O}_Y\left( 
\sum_{i=1}^r d_i -s-1
\right).
\]
\end{proposition}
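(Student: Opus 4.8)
The plan is to use the adjunction formula for complete intersections, which is the standard computation in algebraic geometry (see Hartshorne, Chapter II, Exercise 8.4, or Chapter III for the duality-theoretic version). First I would reduce to the case of a smooth hypersurface. If $Z \subset \mathbb{P}^s$ is a smooth hypersurface of degree $d$, then from the exact sequence of the normal bundle $0 \to T_Z \to T_{\mathbb{P}^s}|_Z \to N_{Z/\mathbb{P}^s} \to 0$ together with $N_{Z/\mathbb{P}^s} = \mathcal{O}_Z(d)$ and the Euler sequence, taking determinants gives $\omega_Z = \omega_{\mathbb{P}^s}|_Z \otimes \mathcal{O}_Z(d) = \mathcal{O}_Z(d - s - 1)$, since $\omega_{\mathbb{P}^s} = \mathcal{O}_{\mathbb{P}^s}(-s-1)$.

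Next I would iterate. Set $Y_{(j)} = Y_1 \cap \cdots \cap Y_j$, so that $Y_{(0)} = \mathbb{P}^s$ and $Y_{(r)} = Y$. Since $Y$ is a complete intersection, at each stage $Y_{(j)}$ is a smooth (or at least, for the purposes of the canonical sheaf, Cohen--Macaulay and generically smooth; in our application $Y = F_{k,n}$ is smooth by Proposition \ref{sing-set}) subvariety of $Y_{(j-1)}$ cut out by the restriction of the hypersurface $Y_j$, which is a divisor in the class $\mathcal{O}_{Y_{(j-1)}}(d_j)$. Applying adjunction at each step,
\[
\omega_{Y_{(j)}} = \omega_{Y_{(j-1)}}|_{Y_{(j)}} \otimes \mathcal{O}_{Y_{(j)}}(d_j).
\]
Composing these $r$ identities and using $\omega_{\mathbb{P}^s} = \mathcal{O}_{\mathbb{P}^s}(-s-1)$ yields
\[
\omega_Y = \mathcal{O}_Y\!\left(-s-1 + \sum_{i=1}^r d_i\right),
\]
which is the claim.

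The only delicate point is justifying that adjunction applies at each intermediate stage, i.e. that $Y_{(j-1)}$ is smooth (or regular enough) along $Y_{(j)}$ so that $Y_{(j)}$ is a Cartier divisor with the expected normal bundle. For a strict complete intersection this follows from the fact that the defining equations form a regular sequence, so each $Y_{(j)}$ has the expected codimension and the conormal sheaf $\mathcal{I}_{Y_{(j)}/Y_{(j-1)}}/\mathcal{I}^2$ is locally free of rank $1$ and equal to $\mathcal{O}_{Y_{(j)}}(-d_j)$; one can alternatively phrase the whole argument via the Koszul resolution of $\mathcal{O}_Y$ and the formula $\omega_Y = \mathcal{E}xt^r_{\mathcal{O}_{\mathbb{P}^s}}(\mathcal{O}_Y, \omega_{\mathbb{P}^s})$ from Serre duality for the closed embedding $Y \hookrightarrow \mathbb{P}^s$, computing the $\mathcal{E}xt$ from the Koszul complex, which makes no smoothness hypothesis necessary at all. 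In the present paper the intended application is to $Y = F_{k,n}$, which is a smooth complete intersection of $n-1$ hypersurfaces of degree $k$ in $\mathbb{P}^{n+1}$, so the formula specializes to $\omega_{F_{k,n}} = \mathcal{O}_{F_{k,n}}((n-1)k - n - 2)$; I expect the proof in the paper to simply cite the standard reference (Hartshorne II.8 or III.7) rather than reproduce the Koszul computation, so the "obstacle" here is purely expository.
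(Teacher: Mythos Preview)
Your proposal is correct, and your prediction at the end is exactly right: the paper's proof consists solely of the citation ``\cite[exer.~8.4 p.~188]{Har77}'' with no further argument. Your adjunction/Koszul sketch is the standard justification behind that exercise, so there is nothing to compare.

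One small slip in your final paragraph: the curve $F_{k,n}$ sits in $\mathbb{P}^n$ (coordinates $x_0,\ldots,x_n$), not $\mathbb{P}^{n+1}$, so the specialization should read $\omega_{F_{k,n}} = \mathcal{O}_{F_{k,n}}\big((n-1)k - n - 1\big) = \mathcal{O}_{F_{k,n}}\big((n-1)(k-1)-2\big)$, matching the paper's corollary. (The paper itself has a stray ``$\mathbb{P}^{n+1}$'' at the start of Section~\ref{Sec:5}, which may have misled you.)
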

\begin{proof}
\cite[exer. 8.4 p. 188]{Har77}
\end{proof}

\s

The curve $F_{k,n}$ is given as complete intersection of $n-1$ hypersurfaces of degree $k$. Therefore, we have the following 

\s
\noindent
\begin{corollary}
The canonical sheaf on the curves $F_{k,n}$ is given by 
\[
\omega_{F_{k,n}}=\mathcal{O}_{F_{k,n}}\big( (n-1)k-n-1 \big)=
\mathcal{O}_{F_{k,n}}\big( (n-1)(k-1)-2 \big).
\]
\end{corollary}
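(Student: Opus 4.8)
The plan is to obtain this as a direct application of the preceding Proposition, which is the adjunction formula $\omega_Y=\mathcal{O}_Y\big(\sum_i d_i-s-1\big)$ for a complete intersection $Y\subset\mathbb{P}^s$ of $r$ hypersurfaces of degrees $d_1,\dots,d_r$. First I would pin down the precise data of the embedding being used: by eq.~\eqref{defineGFC} the curve $F_{k,n}=C^{k}(\lambda_{1},\ldots,\lambda_{n-2})$ lives in the projective space with the $n+1$ homogeneous coordinates $x_0,\dots,x_n$, that is, in $\mathbb{P}^n$, and it is cut out there by the $n-1$ polynomials $f_i=\lambda_i x_0^k+x_1^k+x_{2+i}^k$, $0\le i\le n-2$, each of degree $k$. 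Thus in the notation of the Proposition we take $s=n$, $r=n-1$ and $d_1=\cdots=d_{n-1}=k$.

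Next I would verify that the hypotheses of the Proposition are satisfied. Proposition~\ref{sing-set} shows that these $n-1$ hypersurfaces intersect transversally along $F_{k,n}$, so $F_{k,n}$ is a nonsingular complete intersection; the proposition immediately following it shows moreover that the homogeneous ideal $I_{k,n}$ is prime, so $F_{k,n}$ is an irreducible smooth complete intersection curve and the canonical-bundle formula applies to it verbatim.

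Substituting the data yields
\[
\omega_{F_{k,n}}=\mathcal{O}_{F_{k,n}}\Big(\sum_{i=1}^{n-1}k-n-1\Big)=\mathcal{O}_{F_{k,n}}\big((n-1)k-n-1\big),
\]
and the second displayed form of the statement then follows from the elementary identity $(n-1)(k-1)-2=(n-1)k-(n-1)-2=(n-1)k-n-1$. As a consistency check one may note that, since $\deg\mathcal{O}_{F_{k,n}}(1)=k^{n-1}$ by Bézout's theorem, this gives $\deg\omega_{F_{k,n}}=k^{n-1}\big((n-1)(k-1)-2\big)=2g_{(k,n)}-2$, in agreement with eq.~\eqref{genero}.

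There is no genuine obstacle here: the statement is a bookkeeping corollary of the adjunction formula. The only point demanding the slightest care is keeping track that the ambient space for the model~\eqref{defineGFC} is $\mathbb{P}^n$ (with $n+1$ homogeneous coordinates), so that $s=n$, $r=n-1$ and $\sum_i d_i-s-1=(n-1)k-n-1$, and then recording that this number equals $(n-1)(k-1)-2$.
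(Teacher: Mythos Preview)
Your proof is correct and follows exactly the approach the paper intends: the paper simply records the line ``The curve $F_{k,n}$ is given as complete intersection of $n-1$ hypersurfaces of degree $k$'' and states the corollary, so your argument is just the explicit unpacking of that sentence via the preceding Proposition. Your consistency check against eq.~\eqref{genero} is also the remark the paper makes immediately after the corollary.
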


\s

Of course this is  compatible with the genus computation given in \cref{genero} since 
the degree of $\mathcal{O}_{F_{k,n}}(1)$ is $k^{n-1}$.  

\s
\noindent
\begin{proposition}
Let $i:X \hookrightarrow \mathbb{P}^s$ be a closed projective subvariety, such that
the map 
\[
H^0\big(\mathbb{P}^s,\mathcal{O}_{\mathbb{P}^s}(1)\big) 
\stackrel{i^*}{\longrightarrow} 
H^0\big(X,\mathcal{O}_X(1)\big)
\]
is an isomorphism. Every automorphism of $X$ preserving $\mathcal{O}_X(1)$
can be extended to an automorphism of the ambient projective space, i.e. it is an element in $\mathrm{PGL}_{s+1}(K)$.  
\end{proposition}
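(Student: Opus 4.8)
The plan is to show that an automorphism $\sigma$ of $X$ which preserves $\mathcal{O}_X(1)$ must arise from a linear map on $\mathbb{P}^s$, by pulling back the embedding data. Since $X \hookrightarrow \mathbb{P}^s$ is the morphism associated to the complete linear system $|\mathcal{O}_X(1)|$ (or at least to the subsystem spanned by the image of $i^*$), and since $i^*$ is an isomorphism $H^0(\mathbb{P}^s,\mathcal{O}_{\mathbb{P}^s}(1)) \xrightarrow{\sim} H^0(X,\mathcal{O}_X(1))$, the sections $i^*x_1,\ldots,i^*x_{s+1}$ form a basis of $H^0(X,\mathcal{O}_X(1))$. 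First I would observe that $\sigma^*\mathcal{O}_X(1) \cong \mathcal{O}_X(1)$ by hypothesis, so $\sigma$ induces a $K$-linear automorphism
\[
\sigma^* : H^0(X,\mathcal{O}_X(1)) \longrightarrow H^0(X,\mathcal{O}_X(1)),
\]
well-defined up to the scalar coming from the choice of isomorphism $\sigma^*\mathcal{O}_X(1)\cong\mathcal{O}_X(1)$ — that is, well-defined as an element of $\mathrm{PGL}_{s+1}(K)$.

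Next I would transport this to the ambient space: let $M \in \mathrm{GL}_{s+1}(K)$ be a matrix representing $\sigma^*$ in the basis $\{i^*x_j\}$, and let $\widetilde\sigma \in \mathrm{PGL}_{s+1}(K)$ be the corresponding automorphism of $\mathbb{P}^s$. The key verification is that $\widetilde\sigma$ restricts to $\sigma$ on $X$. This is because the embedding $i$ is characterized by: a point $P\in X$ goes to the hyperplane of sections in $H^0(X,\mathcal{O}_X(1))$ vanishing at $P$; equivalently, $i(P) = [i^*x_1(P):\cdots:i^*x_{s+1}(P)]$ once we fix a trivialization of the fiber. Then for $P \in X$ one computes, using $\sigma^*(i^*x_j) = \sum_\nu M_{j\nu}\, i^*x_\nu$ composed with the fact that evaluating at $\sigma(P)$ corresponds to evaluating the pulled-back section at $P$, that $i(\sigma(P)) = \widetilde\sigma(i(P))$. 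Since $i$ is a closed immersion, this identifies $\sigma$ with $\widetilde\sigma|_X$, proving $\sigma \in \mathrm{PGL}_{s+1}(K)$.

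The main obstacle I expect is purely bookkeeping: being careful about the scalar ambiguities (the line bundle isomorphism is unique only up to $K^*$, and the embedding by a linear system is defined only up to $\mathrm{PGL}$), and making sure the chain of identifications "evaluation of $\sigma^*s$ at $P$" versus "evaluation of $s$ at $\sigma(P)$" is set up consistently. None of this should require any geometry of $X$ beyond the hypothesis that $i^*$ on global sections of $\mathcal{O}(1)$ is an isomorphism; in particular the argument is characteristic-free and does not use the complete-intersection structure, only the projective normality statement in degree $1$. Concretely, for the application to $F_{k,n}$ one then needs the companion fact that $H^0(\mathbb{P}^{n+1},\mathcal{O}(1)) \to H^0(F_{k,n},\mathcal{O}(1))$ is an isomorphism — which follows since the defining ideal $I_{k,n}$ of the complete intersection is generated in degree $k \geq 2$, so it contributes nothing in degree $1$ — and then every automorphism of $F_{k,n}$ fixing $\mathcal{O}_{F_{k,n}}(1)$ is linear.
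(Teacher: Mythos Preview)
Your argument is correct and is precisely the standard one: an automorphism preserving $\mathcal{O}_X(1)$ acts linearly on $H^0(X,\mathcal{O}_X(1))$, and via the hypothesis $i^*$ this transports to an element of $\mathrm{PGL}_{s+1}(K)$ which restricts to $\sigma$ because $i$ is the morphism defined by that complete linear system. The paper itself does not prove this proposition at all; it simply cites \cite[Prop.~2.1]{Kon02}, whose proof is exactly the argument you wrote.

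One small remark on your closing paragraph about the application to $F_{k,n}$: the observation that the ideal $I_{k,n}$ is generated in degree $k\geq 2$ gives you only \emph{injectivity} of $i^*$ in degree $1$. Surjectivity --- i.e.\ $h^0(F_{k,n},\mathcal{O}(1))=n+1$ --- needs the additional input that a smooth complete intersection is projectively normal (equivalently $H^1(\mathbb{P}^n,\mathcal{I}_{F_{k,n}}(1))=0$), which follows from the Koszul resolution but is not quite as immediate as you suggest. This does not affect the proof of the proposition itself, only the side comment.
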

\begin{proof}
\cite[prop. 2.1]{Kon02}
\end{proof}

\s

We may try to prove that every automorphism is linear in the following way.
Every automorphism $\sigma$ of the curve $F_{k,n}$ should preserve the canonical sheaf so it should preserve $\mathcal{O}_{F_{k,n}}\big( (n-1)(k-1)-2 \big)$. Does it preserve $\mathcal{O}_{F_{k,n}}(1)$? This is certainly  true if $\mathrm{Pic}(F_{k,n})$ has no torsion
and it is the general way how one proves linearity in higher dimensional varieties. Unfortunately curves have torsion in their Picard group. 

\s
\subsection{Proof of linearity part of Theorem \ref{teolineal}}\label{Sec:parte(1)}
Let $D=\mathcal{O}_{F_{k,n}}(1)$. For every automorphism $\sigma \in {\rm Aut}(F_{k,n})$ we consider the difference $T_\sigma:=\sigma(D)-D$. It is a divisor of degree $0$, and the divisor 
$\big( (n-1)(k-1)-2 \big) T_\sigma$ is principal. Hence $T_\sigma$ is a 
$\big( (n-1)(k-1)-2 \big)$-torsion point in the Jacobian of the curve $F_{k,n}$.
The automorphism is linear if and only if $T_\sigma$ is zero. 

\s
\noindent
\begin{lemma}
The map $\sigma \mapsto T_\sigma$ is a derivation, i.e.
\[
T_{\sigma \tau}=\sigma T_\tau + T_\sigma.
\]
\end{lemma}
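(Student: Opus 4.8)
The statement to prove is that $\sigma \mapsto T_\sigma := \sigma(D) - D$ (with $D = \mathcal{O}_{F_{k,n}}(1)$) is a crossed homomorphism (``derivation'') for the natural action of $\mathrm{Aut}(F_{k,n})$ on $\mathrm{Pic}^0(F_{k,n})$, i.e. $T_{\sigma\tau} = \sigma T_\tau + T_\sigma$. The plan is to compute $T_{\sigma\tau}$ directly from the definition and insert a telescoping term. Writing divisor classes additively and letting automorphisms act on $\mathrm{Pic}$ by pushforward (pullback by the inverse), we have $T_{\sigma\tau} = (\sigma\tau)(D) - D$. The key move is to add and subtract $\sigma(D)$:
\[
T_{\sigma\tau} = \big((\sigma\tau)(D) - \sigma(D)\big) + \big(\sigma(D) - D\big) = \sigma\big(\tau(D) - D\big) + \big(\sigma(D) - D\big) = \sigma T_\tau + T_\sigma,
\]
where the middle equality uses that the $\mathrm{Aut}$-action on divisor classes is additive, so $\sigma(\tau(D)) - \sigma(D) = \sigma(\tau(D) - D)$.

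The only point requiring a word of care is that this computation takes place in $\mathrm{Pic}^0(F_{k,n})$, not at the level of honest divisors: a priori $\sigma(D)$ and $D$ are linearly equivalent divisors of the same degree $k^{n-1}$ only after we have fixed that $\sigma$ preserves the class of $\mathcal{O}_{F_{k,n}}(1)$ — but in fact we do not even need that, since $T_\sigma$ is manifestly of degree $0$ regardless (both $\sigma(D)$ and $D$ are hyperplane sections, hence of degree $\deg\mathcal{O}_{F_{k,n}}(1)$), and it lands in the $\big((n-1)(k-1)-2\big)$-torsion as explained in the paragraph preceding the lemma. So the identity is an identity of torsion classes in the Jacobian, and the group $\mathrm{Aut}(F_{k,n})$ acts on this abelian group $\mathbb{Z}$-linearly; the displayed three-term computation is then valid verbatim.

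There is essentially no obstacle here; the ``hard part'', such as it is, is purely bookkeeping about conventions — making sure the action $\sigma \cdot (-)$ on $\mathrm{Pic}$ is the one for which $(\sigma\tau)\cdot D = \sigma\cdot(\tau\cdot D)$, so that the insertion step is legitimate. Concretely I would fix the convention $\sigma \cdot E := (\sigma^{-1})^* E$ for a divisor (class) $E$, check it is a left action compatible with addition, and then the telescoping identity above is immediate. The real content of the lemma is not in its proof but in its use downstream: it shows $\sigma \mapsto T_\sigma$ is a $1$-cocycle, so that linearity of $\sigma$ (vanishing of $T_\sigma$) is controlled by a cohomological/derivation obstruction, which is the mechanism by which the remaining cases of Theorem~\ref{teolineal} will be settled.
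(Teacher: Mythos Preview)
Your proof is correct and is essentially identical to the paper's own argument: both insert $-\sigma(D)+\sigma(D)$ into $\sigma\tau(D)-D$ and use additivity of the action on divisor classes to obtain $\sigma(T_\tau)+T_\sigma$. The extra remarks you make about conventions and where the identity lives are fine but go beyond what the paper records.
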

\begin{proof}
Observe that 
\[
T_{\sigma \tau}=\sigma \tau (D)-D=\sigma \tau(D) -\sigma(D) +\sigma(D)-D=\sigma(T_\tau)+T_\sigma.
\]
\end{proof}

\s
\noindent
\begin{lemma}
The torsion points $T_\sigma$ are $H_{0}$-invariant.
\end{lemma}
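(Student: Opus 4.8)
The plan is to exploit the fact that $H_{0}$ is a normal subgroup of ${\rm Aut}(F_{k,n})$ (Lemma \ref{normL} shows $L$ is normal, and when $k-1$ is a power of $p$ with $(k,n+1)=1$ the hypotheses of Theorem \ref{teolineal} are in force; in any case $H_{0}$ consists of linear automorphisms, so it is contained in $L$ and is normal in ${\rm Aut}(F_{k,n})$ by the structural results quoted), together with the derivation property $T_{\sigma\tau}=\sigma T_\tau+T_\sigma$ just established. Concretely, I want to show that for every $h\in H_{0}$ and every $\sigma\in{\rm Aut}(F_{k,n})$ we have $h(T_\sigma)=T_\sigma$ in the Jacobian.

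The key computation is this. Since $H_{0}<L$, every $h\in H_{0}$ is linear, hence $h(D)=D$ as divisor classes (a linear automorphism pulls a hyperplane section back to a hyperplane section), so $T_h=h(D)-D=0$. Now apply the derivation identity in two ways to the product $\sigma h$ versus $h\sigma'$ where $\sigma'=h^{-1}\sigma h\in{\rm Aut}(F_{k,n})$ (using normality of $H_{0}$, $h\sigma h^{-1}$ is again an automorphism, but more to the point I will compare $\sigma h$ and $h\sigma'$ which are equal as automorphisms). On one hand $T_{\sigma h}=\sigma T_h+T_\sigma=T_\sigma$ because $T_h=0$. On the other hand, writing the same automorphism as $h\sigma'$ with $\sigma'=h^{-1}\sigma h$, we get $T_{\sigma h}=T_{h\sigma'}=h T_{\sigma'}+T_h=h T_{\sigma'}$. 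Hence $T_\sigma=h(T_{\sigma'})$ where $\sigma'=h^{-1}\sigma h$. This is almost what we want but not quite; to finish I would instead run the argument with the commutator, or more cleanly: apply the derivation to $h\sigma$ in the two orders $h\sigma=\sigma\cdot(\sigma^{-1}h\sigma)$, giving $h T_\sigma+T_h = \sigma T_{\sigma^{-1}h\sigma}+T_\sigma$, so $h T_\sigma = \sigma T_{\sigma^{-1}h\sigma}+T_\sigma$. Since $\sigma^{-1}h\sigma\in H_{0}$ by normality, it too is linear and $T_{\sigma^{-1}h\sigma}=0$; therefore $h T_\sigma=T_\sigma$, as desired.

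The main obstacle — really the only subtle point — is justifying that elements of $H_{0}$ have vanishing associated torsion, i.e. that they act trivially on the class $D=\mathcal{O}_{F_{k,n}}(1)$ in ${\rm Pic}(F_{k,n})$, and that $H_{0}$ is genuinely normal in the full automorphism group in the case at hand. The first is immediate from linearity of the $\varphi_j$ (they are explicit diagonal projective transformations), and linearity gives $\sigma(D)\sim D$ for any linear $\sigma$. The second follows from the fact that $H_{0}\subset L$ and $L\trianglelefteq{\rm Aut}(F_{k,n})$ by Lemma \ref{normL}, together with the fact that $H_{0}$ is the unique subgroup of $L$ of its isomorphism type acting as a generalized Fermat group — but for the present lemma one only needs $\sigma^{-1}H_{0}\sigma\subset L$, which already gives that $\sigma^{-1}h\sigma$ is linear, and that suffices for $T_{\sigma^{-1}h\sigma}=0$. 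So the argument reduces to the one-line derivation computation above once these structural facts are in place.
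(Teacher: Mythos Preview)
Your proof is correct and essentially identical to the paper's: both compute $T_{h\sigma}$ two ways using the derivation identity, once as $hT_\sigma+T_h=hT_\sigma$ and once as $\sigma T_{\sigma^{-1}h\sigma}+T_\sigma=T_\sigma$, invoking only that $h$ and $\sigma^{-1}h\sigma$ lie in $L$ (the latter by normality of $L$, Lemma~\ref{normL}) so their $T$-values vanish. Your care in noting that one needs only $\sigma^{-1}H_0\sigma\subset L$ rather than normality of $H_0$ itself is exactly right and matches the paper's use of Lemma~\ref{normL}.
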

\begin{proof}
Using Lemma \ref{normL} we find an $\ell \in L$ such that $h \sigma=\sigma \ell$. 
For all linear automorphisms $\ell$ and in particular for $\ell \in H_{0}$ we have $T_\ell=0$. We now use the 
derivation rules:
\[
T_{h\sigma}=h T_{\sigma}+ T_h=h T_{\sigma}
\]
and 
\[
T_{\sigma \ell}= \sigma T_\ell + T_{\sigma}=T_{\sigma}.
\]
The desired result follows, since $T_{h \sigma}=T_{\sigma \ell}$. 
\end{proof}

\s

Consider the natural map $\pi:F_{k,n} \rightarrow F_{k,n}/H_{0}\cong\mathbb{P}^1$. 
We have two maps induced on the Jacobians, namely
\[
\pi_*: \mathrm{Jac}(F_{k,n}) \rightarrow \mathrm{Jac}(F_{k,n}/H_{0})
\]
\[
\sum n_P P \mapsto \sum n_P \pi(P), 
\]
and 
\[
\pi^*:\mathrm{Jac}(F_{k,n}/H_{0})  \rightarrow \mathrm{Jac}(F_{k,n}) 
\]
\[
\sum n_Q Q \mapsto \sum n_Q \sum_{P \in \pi^{-1}(Q)} e(P/Q)  P, 
\]
where $\sum n_P P$ (resp. $\sum n_Q Q$) is a divisor of degree $0$ in $F_{k,n}$ (resp. $\mathbb{P}^1$) 
and $e(P/Q)$ denotes the ramification index of a point $P$ lying above $Q$.

Observe that the map  $\pi^*\circ \pi_*:\mathrm{Jac}(F_{k,n}) \rightarrow \mathrm{Jac}(F_{k,n})$  
 is given by sending a point $P\in \mathrm{Jac}(F_{k,n})$
 to $\sum_{h \in H_{0}} P$. On the other hand side $\pi_* \circ \pi^*$ is the zero map 
 since the Jacobian of the projective line is trivial. 
 
This means that on the $H_{0}$-invariant points $P_\sigma$, multiplication by $|H_{0}|=k^n$ is zero. 
Since $T_\sigma$ is an $\big( (n-1)(k-1)-2 \big)$-torsion point, if $(k,n+1)=1$, then 
$T_\sigma$ is zero and $\sigma$ is linear. 

\s
\subsection{Proof of second part of Theorem \ref{teolineal}}
Under the extra assumption that $(k,n+1)=1$, we have seen in Section \ref{Sec:parte(1)} that $L={\rm Aut}(F_{k,n})$. Now
Proposition \ref{powerp} states the last part of our theorem.

\s\s\s

{\bf Aknowledgment:} The authors would like to  thank  Gabino Gonz\'alez-Diez for his  remarks. 

\s


\end{document}